\newtheorem{theorem}{Theorem}[section]
\newtheorem*{theorem*}{Theorem}
\newtheorem{corollary}[theorem]{Corollary}
\newtheorem{lemma}[theorem]{Lemma}
\newtheorem{proposition}[theorem]{Proposition}
\theoremstyle{definition}
\newtheorem{definition}[theorem]{Definition}
\newtheorem{assumption}[theorem]{Assumption}
\newtheorem{exmp}[theorem]{Example}
\numberwithin{equation}{section}
\DeclareMathOperator*{\esssup}{ess\,sup}
\DeclareMathOperator*{\essinf}{ess\,inf}
\DeclareMathOperator*{\argmin}{arg\,min}
\theoremstyle{remark}
\newtheorem{remark}{Remark}
\begin{document}
\title{Gittins' theorem under uncertainty}

\author{
	Samuel N. Cohen\footnote{Mathematical Institute, University of Oxford, Woodstock Rd, Oxford, OX2 6GG, UK, \texttt{samuel.cohen@maths.ox.ac.uk}}
	\and
	Tanut Treetanthiploet\footnote{Mathematical Institute, University of Oxford, Woodstock Rd, Oxford, OX2 6GG, UK, \texttt{tanut.treetanthiploet@maths.ox.ac.uk}}
}
\date{\today}


\maketitle
\begin{abstract}
We study dynamic allocation problems for discrete time multi-armed bandits under uncertainty, based on the the theory of nonlinear expectations. We show that, under independence assumption on the bandits and with some relaxation in the definition of optimality, a Gittins allocation index gives optimal choices. This involves studying the interaction of our uncertainty with controls which determine the filtration. We also run a simple numerical example which illustrates the interaction between the willingness to explore and uncertainty aversion of the agent when making decisions.

Keywords: Gittins index, uncertainty, nonlinear expectation, multi-armed bandits, time-consistency, robustness.
 
MSC 2010: 93E35, 60G40, 91B32, 91B70

\end{abstract}

\section{Introduction}
\label{sec1}
When making decisions, people generally have a strict preference for options which they understand well. Since the classical work of Knight \cite{Knight} and Keynes \cite{Keynes}, there has been a stream of thinking within economics and statistics that focuses on the difference between the randomness of an outcome and lack of knowledge of its probability distribution (sometimes called `Knightian uncertainty'). This lack of knowledge is often related to estimation, as the probabilities used are often based on past observations.

This raises a natural question: how should we make decisions, given they will affect both our short-term outcomes, and the information available in the future? Shall we make a decision to explore and obtain new information, or shall we exploit the information available to optimize our profit? A simple setting in which this arises is a multi-armed bandit problem.

Modeling learning of the distribution of outcomes leads us to a paradox due to inconsistency in our decisions. As Keynes is said to have remarked\footnote{It appears this quote may be misattributed. One suggestion (discussed by John Kay \cite{Kay2015}) is that the correct attribution is to Paul Samuelson, and should read ``When my information changes, I alter my conclusions'', which fits even more easily with the thrust of this paper. }, ``When the facts change, I change my mind. What do you do, sir?''  The question a rational decision maker faces is, ``if I suspect that I will change my opinions or preferences tomorrow, how do I account for this today?''

In this paper, we use the theory of nonlinear expectations
(or equivalently risk measures) which are known to model Knightian uncertainty
(see  F\"ollmer and Schied \cite{stoc_fin}). This has been used to address statistical uncertainty, for example in  \cite{DR_original, Extreme_risk}. To achieve consistency in decision 
making, we usually have to consider a time-consistent nonlinear expectation. 
This is widely studied through backward stochastic 
differential equations (BSDEs) (see, for example, the work of Peng and others \cite{BSDE_and_application, 
	g-expectation, G_expectation, review_g-expectation}).
	
However, these approaches presume that the flow of information is not controlled. (Formally, the filtration of our agent is fixed and independent of their controls.) When we can control the observations which we will receive, this is not the case. In order to address this issue, while accounting for uncertainty, we discuss an alternative approach to deriving a time-consistent control problem, based on ideas from indifference pricing and the martingale optimality principle.  Using this approach, we show that when comparing different independent options, we can calculate an index separately for each alternative such that the `optimal' strategy is always to choose the option with the smallest index. This idea was initially proposed by Gittins and Jones \cite{Gittin_origin} (see also \cite{Gittins1979, Index_theory}) in a context where the probability measure is fixed but estimation (in a Bayesian perspective) is modeled by the evolution of a Markov process.

Given this result, we demonstrate a numerical solution in a simple setting. We 
shall see that our algorithm gives behaviour which is both optimistic and 
pessimistic in different regimes, and compares well with existing methods for multi-armed bandits.

\subsection{Multi-armed bandits}

Multi-armed bandits are a classical toy example with which to study decision making with randomness. They are commonly known to have applications in medical trials (Armitage \cite{medic_trial2} or Anscombe \cite{Medic_trial1}) and experimental design (Berry and Fristedt \cite{Experiment_design_1} or the classic paper of Robbins \cite{Experimental_design2}), along with other areas. A few recent works in finance for portfolio selection can also be found in Huo and Fu \cite{Finance_application} or Shen et al. \cite{Finance_application2}. The basic idea is that one has $M$ `bandits\footnote{`One-armed bandits' are an early variety of automated gambling system, the descendants of which are also known as slot machines, fruit machines or poker machines, depending on nationality.}', or equivalently, a bandit with $M$ arms, and one must choose which bandit should be played at each time. A key paper studying these systems, Gittins and Jones \cite{Gittin_origin}, argued that for a collection of independent bandits, each governed by a countable state Markov process, one could compute the ``Gittins index'' for each bandit separately, and the optimal strategy is to play the machine with the lowest index (or the highest, depending on the sign of gains/losses). The proof of this result has been obtained using a number of different perspectives, for example Weber's prevailing charge formulation \cite{Weber_proof} (which we consider in more detail below), Whittle's retirement option formulation \cite{Whittle_proof} and its extension without a Markov assumption by El Karoui and Karatzas \cite{Karatzas_El_Karoui_discrete} (and \cite{cts_Gittin} in continuous time). A review of the proofs in discrete time is given by Frostig and Weiss \cite{Survey_proof_Gittin}. However, in all these cases, the objective to be optimized is the discounted expected gain/loss -- in particular, we are assumed to have no risk-aversion or uncertainty-aversion. 

Gittins' index theory is commonly known as the first solution to an adaptive 
and sequential experimental design problem (from a Bayesian perspective) where the payoff of each bandit is assumed to be generated from a 
fixed unknown distribution\footnote{There are a few variations on these assumption e.g. adversarial bandits, contextual bandits or non-stationary bandits. Reviews of these 
	can be found in Burtini et al. 
	\cite{adaptive_learning_survey} and Zhou \cite{Survey_contextual_bandit}.} which must be inferred 
`on-the-fly', but where experimentation may be costly. As an alternative to Gittins' index, Agrawal \cite{Original_UCB} proposed the `Upper Confidence Bound (UCB) 
algorithm' which achieves a regret (deviation of average reward from the 
optimal reward) with the minimal asymptotic order of $\log(N)$, as proved by Lai 
and Robbins \cite{Asymp_efficient}. In the UCB algorithm, we compute a 
confidence interval for the expected reward at each step, and then play the 
bandit with the largest upper bound (where positive outcomes are preferred). 
Intuitively, using an upper bound encourages us to try bandits where we are 
less certain of the average reward, which encourages exploration. This is a 
form of `optimism' in decisions, which is counter-intuitive from the classical 
`pessimistic' utility theory (\emph{\`a la} von Neumann and Morgenstern 
\cite{VonNeumann1944}), where our preferences are for more certain outcomes. 

Typically, under appropriate assumptions, it is also the case that Gittins' index is a form of upper confidence bound for the estimated reward, an idea originally based on observations in Bather \cite{Bather_Gittins_asymptotic} and Kelly \cite{Asymp_Gittins_Frank_Kelly} and explored in more detail by Chang and Lai \cite{Asymptotic_summary}, followed up by Brezzi and Lai \cite{Brezzi_and_Lai_approx} (see Yao \cite{Asymp_Gittins_with_correction} for an error correction). Lattimore \cite{Regret_Gittins} also proves that Gittins' index achieves a minimal order bound on regret.

The apparent contradiction between the optimism of the UCB algorithm and  Gittins' index and the pessimism of classical utility theory is what led to this paper. We extend the notion of Gittins' index to a robust (nonlinear) operator, allowing for uncertainty aversion. We work in a generic discrete-time setting, allowing for the possibility of online learning, non-stationary and continuous outcomes, embedding all these effects in an abstract `nonlinear expectation'. (A concrete application to a simple setting with learning and uncertainty is given in Section 6.) In particular, we reformulate the proof of Gittins index theorem proposed by Weber \cite{Weber_proof}, as this proof relies the least on the linearity of the expectation, and gives a natural form of time-consistency. We also remove a Markov assumption in Weber's proof by adapting El Karoui and Karatzas' formulation \cite{Karatzas_El_Karoui_discrete}.  Our solution involves an optimal stopping problem under a nonlinear expectation, which can be converted to a low dimensional reflected BSDE (see for example El Karoui et al. \cite{cts_reflect_BSDE} and Cheng and Riedel \cite{Nonlinear_stopping} in continuous time or An, Cohen and Ji \cite{discrete_reflect_BSDE} in discrete time). This allows us to see a balance between the desire to explore and to exploit in our decision making.

The robust version of Gittins index has some correlation to the adversarial 
bandit problem  (see, for example, Auer et al. \cite{Adversarial_bandit_original, adversarial_bandit_improvement}) where we are playing the bandit against an adversary. Our theory proposes an `optimal' deterministic strategy
(no additional randomness is introduced at the decision time) against an adversary who tries to maximize 
our cost, which is slightly different from the known random algorithms for the adversarial bandit problem. The key difference is that, in the classical 
adversarial problem, an adversary is trying 
to maximize our `regret' whereas in our setting, we view the adversary as 
trying to maximize our cost. In our setting, the adversary is also permitted to respond to our current controls at every time, and we do not assume a minimax theorem holds.

The study of an adversary for the payoff in the bandit problem (via Gittins index) has been considered by 
Caro and Gupta 
\cite{finite_robust_gittins} and Kim and Lim \cite{Robust_gittin_alternnative} 
(with additional penalty in the reward) using Whittle's retirement option 
argument 
\cite{Whittle_proof}. In their works, they rely heavily on a Markov assumption, which allows them to postulate a robust dynamic programming principle (see also Iyengar 
\cite{Robust_DPP}, Nilim and El Ghaoui \cite{Robust_Markov_DPP}). Their 
formulation considers the robust Gittins' strategy as a promising solution due to its optimality for a single bandit, but they do not show optimality for multiple bandits. Furthermore, their Markov assumption restricts them to have a fixed uncertainty at all times and, therefore, it is not clear how to incorporate learning in their model. In contrast, our framework pays more attention to defining a good notion of dynamic optimality for our nonlinear expectation without any Markov assumption. 

By encoding learning through nonlinear expectations, a wide range of modeling options are included in our approach. For example, statistical concerns are treated in this framework in \cite{DR_original, 
uncertainty_filtering} or Bielecki, Cialenco and Chen \cite{
Bielecki2017}. We could also allow adversarial choices with a range of a fixed set (as in the classical adversarial bandit problem \cite{Adversarial_bandit_original} or as in \cite{finite_robust_gittins, Robust_gittin_alternnative}) or a random 
set which can be used to model learning as in the 
classical Gittins' theory. We also allow dynamic adversaries, which are not considered in the usual adversarial setting.

The paper proceeds as follows: In Section \ref{related}, we present some relevant existing approaches to multi-armed bandits, which we will adapt and combine to obtain our result. In Section \ref{uncertainty}, we give the required definitions for the nonlinear expectations that we use to evaluate our decisions. We also discuss the different notions of optimality which are available, and how they interact with the dynamic programming principle.

 In Section \ref{Overview section}, we give a summary of how we apply these expectations to a multi-armed bandit problem, state the key result, and give a sketch outline of the proof. The full details of this (rather technical) proof are given in two appendices: Appendix \ref{sec:partAproof} works through the first half of the proof, giving careful analysis of an optimal stopping problem under nonlinear expectation, and the corresponding `fair value' process, for a single bandit; Appendix \ref{Multiarm section} gives the second half of the proof, and demonstrates that the single bandit analysis yields an optimal strategy when deciding between multiple bandits. Further technical lemmas, which are used but do not contributed significantly to the main proof, are given in Appendix \ref{Append: additional result}.

Section \ref{sec:numerics} considers a simple example of a multiple bandit problem numerically, suggesting some connections with behavioural finance; the algorithm used to compute this example is given in Appendix \ref{app:numerical}.

\section{Problem formulation and related approaches}\label{related}
\subsection{General Problem Formulation}
Broadly speaking, Gittins \cite{Gittin_origin} argues that in order to dynamically allocate a single resource amongst several alternative projects, the optimal policy is to play at each point a bandit of lowest ``Gittins' index''. This index can be computed separately for each bandit, by solving an optimal stopping problem.

The subtlety in the proof of Gittins' theorem is to give a tractable representation of the class of control policies available to the decision maker. In the original formulation (see, for example, \cite{Gittin_origin, Weber_proof, Whittle_proof, Survey_proof_Gittin}), the class considered is feedback controls, as in a standard Markovian stochastic control problem; i.e. the system of bandits is modelled as a single Markov process, and the controls alter its transition probabilities. This formulation is restrictive, as it is not clear how it can be applied to a non-Markovian framework. Furthermore, as the control determines the filtration observed, it is also not clear how to introduce a general form of uncertainty aversion in this framework.

El Karoui and Karatzas \cite{Karatzas_El_Karoui_discrete} extend the argument of Gittins' theorem to the general case, without a Markovian assumption, by using Mandelbaum's \cite{Mandelbaum_discrete_allocation} ``allocation strategy'' formulation of the class of control policies. In particular, they view the cost of the bandits as a fixed process. The effect of allocation is to delay the realization of these fixed costs, which results in a benefit to the decision maker due to the time-value of money. 

In this paper, we will use a slight modification of Mandelbaum's allocation strategy to describe optimal strategies using a robust Gittins' theorem (without a Markovian assumption) via the classical argument given by Weber \cite{Weber_proof}, but using El Karoui and Karatzas' \cite{Karatzas_El_Karoui_discrete} formulation. 

\begin{remark}
	In most of the literature on Gittins' theorem, maximization of rewards is usually considered. For convenience, as is common in the theory of nonlinear expectations, we will consider the minimization of costs instead. Our presentation of others' results is done with the corresponding changes in sign.
\end{remark}

\begin{assumption}
	\label{model_assumption}
	Suppose that we have $M$ bandits. The $m$th bandit is associated with a filtered probability space $\big(\Omega^{(m)}, \mathbb{P}^{(m)}, (\mathcal{F}^{(m)}_t)_{t \geq 0}\big)$. Playing this bandit for the $t$th time realizes a non-negative bounded cost $h^{(m)}(t)$, where the process $\big(h^{(m)}(t)\big)_{t\geq1}$ is adapted to $(\mathcal{F}^{(m)}_{t})_{t\ge 0}$. We assume that  $\mathcal{F}^{(m)}_0 = \{\phi, \Omega^{(m)}\}$. 

The goal of the decision maker is to minimize the discounted total cost, for a given discount factor $\beta\in (0,1)$, when they can choose the order in which bandits are played.
\end{assumption}

Before considering a robust approach, we first outline the solution to this problem in a standard setting of classical expectation. 

\begin{definition}
	The \textbf{Gittins index} at time $s \geq 0$ of the $m$th bandit is given by
	$$\gamma^{(m)}(s) = \essinf_{\tau \in \mathcal{T}^{(m)}(s)} \frac{\mathbb{E}^{\mathbb{P}^{(m)}}\big( \sum_{t=1}^{\tau} \beta^{t} h^{(m)}(s+t) \big| \mathcal{F}^{(m)}_s\big)}{\mathbb{E}^{\mathbb{P}^{(m)}}\big( \sum_{t=1}^{\tau} \beta^{t} \big| \mathcal{F}^{(m)}_s\big)} $$
	where $\mathcal{T}^{(m)}(s)$ is the space of positive 
	$(\mathcal{F}^{(m)}_{s+t})_{t \geq 0}$-stopping times\footnote{Equivalently, for $\tau \in \mathcal{T}^{(m)}(s)$, $s+\tau$ is an $(\mathcal{F}^{(m)}_{t})_{t \geq 0}$-stopping time.} and the essential infimum is taken in $L^\infty(\mathcal{F}_s^{(m)})$. 
\end{definition}

\begin{definition}
	\label{orthant space}
	We define the \emph{orthant probability space} $\left(\bar{\Omega}, \bar{\mathbb{P}}, \left(\mathcal{F}(s)\right)_{s \in \mathcal{S}}\right)$ by 
	$$\bar{\Omega} := \prod_{m=1}^M \Omega^{(m)}, \quad \bar{\mathbb{P}} := \bigotimes_{m=1}^M \mathbb{P}^{(m)}, \quad \mathcal{F}(s) := \bigotimes_{m=1}^M \mathcal{F}^{(m)}_{s^{(m)}} : s = (s^{(1)},...,s^{(M)}) \in \mathcal{S}$$
	where $\mathcal{S}:= \mathbb{N}_0^M$.  We write $\mathcal{F}(\infty) := \bigotimes_{m=1}^\infty \mathcal{F}^{(m)}_\infty$.  
	
	We call $\left(\mathcal{F}(s)\right)_{s \in \mathcal{S}}$, the \textit{orthant filtration}.  
	
	To describe a useful set of stopping times in this (multi-indexed) filtration, let 
	\[\mathcal{T}^{(m) } := \left\{ (\mathcal{F}^{(m)}_t)_{t\ge0}\text{-stopping times} \right\}\]
	and
	\[\mathfrak{T}(\mathcal{S}) := \left\{S = (S^{(1)},...,S^{(M)}) \quad : \quad S^{(m)} \in \mathcal{T}^{(m)} \right\}.\]
	For $S \in \mathfrak{T}(\mathcal{S})$, we write
	$\mathcal{F}(S) := \bigotimes_{m=1}^M \mathcal{F}^{(m)}_{S^{(m)}}\quad \text{for } S \in \mathfrak{T}(\mathcal{S}).$
	
\end{definition}

\subsubsection{Classical Gittins Theorem}
Our policies will be described by a (random) path in the space $\mathcal{S}$, which indicates how many times each bandit has been played.

\begin{definition}[Mandelbaum \cite{Mandelbaum_discrete_allocation}]
	\label{Mandelbaum definition}
	The \textbf{Mandelbaum allocation strategy} is an $\mathcal{S}$-valued random sequence $\big(\tilde{\eta}(n)\big)_{n \geq 0}$ such that
	\begin{enumerate}[(i)] 
		\item $\tilde{\eta}(0) = 0$
		\item $\tilde{\eta}(n+1) = \tilde{\eta}(n) + e^{(m)}$ for some $m \in \mathcal{M} =: \{1,...,M\}$.
		\item $\{\tilde{\eta}(n+1) = \tilde{\eta}(n) + e^{(m)}, \; \tilde{\eta}(n) = r\} \in \mathcal{F}(r)$ for all $m \in \mathcal{M} := \{1,...,M\}$ and for all $r \in \mathcal{S}$.
	\end{enumerate}
Here, $e^{(m)}$ denotes the $m$th unit vector in $\mathcal{S}$. We denote by $\mathcal{A}$ the family of all Mandelbaum allocation strategy.
\end{definition}

\begin{theorem}[Gittins' theorem, as proved by El Karoui and Karatzas \cite{Karatzas_El_Karoui_discrete}]
\label{thm: El Karoui and Karatzas Gittins theorem}
	Let $\big(\tilde{\eta}^*(n)\big)_{n \geq 0}$ be a Mandelbaum allocation strategy $\big(\tilde{\eta}^*(n)\big)_{n \geq 0}$ such that 
	$$\gamma^{(m)}(\tilde{\eta}^{(m)}(n)) = \min_{k \in \mathcal{M}}\gamma^{(k)}(\tilde{\eta}^{(k)}(n)) \qquad \text{on each event} \quad \{\tilde{\eta}(n+1) = \tilde{\eta}(n) + e^{(m)}\}$$
	for all $m \in \mathcal{M}$ and $n \geq 0$. Then $\big(\tilde{\eta}^*(n)\big)_{n \geq 0}$ is an optimal solution to the optimisation problem
	\begin{equation}
	\label{El Karoui and Karatzas Objective}
	\inf_{\tilde{\eta} \in \mathcal{A}}\mathbb{E}^{\bar{\mathbb{P}}}\Big(\sum_{n=1}^{\infty}\sum_{m \in \mathcal{M}}\beta^n h^{(m)}(\tilde{\eta}^{(m)}(n)) \big[\tilde{\eta}^{(m)}(n) - \tilde{\eta}^{(m)}(n-1)\big]\Big).
	\end{equation}
\end{theorem}
In particular, Theorem \ref{thm: El Karoui and Karatzas Gittins theorem} says that the strategy which always plays the bandit with the minimum index minimizes the expectation of the total discounted cost.
\begin{remark}
	\label{Mandelbaum and simple seq}
	A Mandelbaum allocation strategy $\big(\tilde{\eta}(n)\big)_{n \geq 0}$ can also be represented by its increments, in particular, by a sequence of decision variables $(\rho_n)_{n \geq 0}$ taking values in $\mathcal{M}$. In other words,  we can define $(\rho_n)_{n \geq 0}$ such that $\{\rho_n = m \} = \{ \tilde{\eta}(n+1) = \tilde{\eta}(n) + e^{(m)} \}$. We may then replace the objective equation \eqref{El Karoui and Karatzas Objective} by
	\begin{equation}
	\label{Alternative El Karoui and Karatzas Objective}
	\inf_{\tilde{\eta} \in \mathcal{A}}\mathbb{E}^{\bar{\mathbb{P}}}\Big(\sum_{n=1}^{\infty}\beta^n h^{(\rho_{n-1})}(t^{\rho}_n)\Big) \;\;\; \text{where} \;\;\; t^{\rho}_n := \sum_{k=0}^{n-1} \mathbb{I}(\rho_k = \rho_{n-1}).
	\end{equation}
\end{remark}
\begin{remark}
Our paper considers the orthant filtration as the product of filtrations defined on different spaces. This is slightly different from
Mandelbaum \cite{Mandelbaum_discrete_allocation} (and thus El Karoui and Karatzas \cite{Karatzas_El_Karoui_discrete}) where the orthant filtration is considered as the join of filtrations defined on the same space. This technical difference will allow us to more easily define a `Nonlinear expectation' which still carries some form of independence and `time-consistency'. (See discussion in Section \ref{sec: join nonlinear}.)
\end{remark}
\begin{remark}
	In El Karoui and Karatzas \cite{Karatzas_El_Karoui_discrete}, it is assumed that the cost process is predictable, instead of adapted, with respect to the filtration of the bandit. When using a classical expectation, there is no modelling difference between predictable and adapted cost processes  (as one can just take the conditional expectation to reduce adapted costs to predictable costs). However, under a `nonlinear expectation', this is not the case, so we give the more general result with adapted costs. 
\end{remark}

\subsubsection{Robust Gittins Index}
\label{subsec: robust Gittins finite}
Under a Markovian assumption, Caro and Gupta \cite{finite_robust_gittins} consider a `robust' Gittins index based on the Robust Bellman equation studied in Iyergar \cite{Robust_DPP} and Nilim and El Ghaoui \cite{Robust_Markov_DPP}. (Similar work is considered by Kim and Lim \cite{Robust_gittin_alternnative} with an additional penalty in the formulation.)

The following assumptions are used in Caro and Gupta \cite{finite_robust_gittins} (translated into our notation):
\begin{enumerate}[(i)]
	\item The cost process is driven by some underlying finite-state process $(X^{(m)}_t)_{t \geq 0}$ taking values in $\mathcal{X}^{(m)}$. i.e. we have $h^{(m)}(t) = \tilde{h}^{(m)}(X^{(m)}_t)$ for some deterministic function $\tilde{h}^{(m)}$.
	\item Ambiguity is described by families of transition matrices, $(\mathcal{U}^{(m)})_{m \in \mathcal{M}}$ for the dynamics of $X^{(m)}$, which may vary in time.
\end{enumerate}
The construction is then based on Whittle indexibility \cite{Whittle_index}. In particular, they reduce the problem to considering two bandits, where one bandit always generates a constant cost $\gamma$ and the other bandit is identical to the $m$th bandit. The worst-case expected cost obtained when starting in state $i$ in the $m$th bandit, $V^{(m)}(i)$, allowing any combination of transition rates, will then satisfy the robust dynamic programming principle, that is, 
\begin{equation}\label{eq:robustDPP}V^{(m)}(i) = \min\bigg(\tilde{h}^{(m)}(i) + \beta \sup_{P \in \mathcal{U}^{(m)}}\sum_{j \in \mathcal{X}^{(m)}}P_{ij} V^{(m)}(j), \quad \frac{\gamma}{1-\beta}\bigg), \qquad i \in \mathcal{X}^{(m)}.\end{equation}

Let $D^{(m)}(\gamma) \subseteq \mathcal{X}^{(m)}$ be the set of states for which it is optimal to rest the $m$th bandit when the reward of the constant bandit is $\gamma$. Caro and Gupta show that the robust bandit is Whittle indexible in the sense that $D^{(m)}(\gamma)$ increases monotonically from $\phi$ to $\mathcal{X}^{(m)}$ as $\gamma$ increases from $-\infty$ to $+\infty$. The index of the $m$th bandit at state $i$ is the unique value $\gamma$ such that the player is indifferent between playing the $m$th bandit and the constant bandit.

This index can be characterized by
\begin{equation}
\label{Caro_Gupta_robust_Gittins}
\tilde{\gamma}^{(m)}(i) = \inf_{\tau \in \mathcal{T}^{(m)}, \tau \geq 1} \;\;\sup_{{\mathbb{Q}} \in \mathcal{Q}^{(m)}} \frac{\mathbb{E}^{\mathbb{Q}}\big( \sum_{t=1}^{\tau} \beta^{t} \tilde{h}^{(m)}(X^{(m)}_t) \big| X^{(m)}_0 = i\big)}{\mathbb{E}^{\mathbb{Q}}\big( \sum_{t=1}^{\tau} \beta^{t} \big| X^{(m)}_0 = i\big)}.
\end{equation}
where $\mathcal{Q}^{(m)}$ is the family of measures corresponding to the family of transition matrices $\mathcal{U}^{(m)}$.

Unfortunately, as discussed in Caro and Gupta \cite{finite_robust_gittins}, the robust Gittins index \eqref{Caro_Gupta_robust_Gittins} does not yield a strategy optimizing the robust Bellman equation
\begin{equation*}
V(i_1,...,i_K) = \min_{k \in [K]}\bigg(\tilde{h}^{(k)}(i_k) + \beta \sup_{P \in \mathcal{U}^{(k)}}\sum_{j \in \mathcal{X}^{(k)}}P_{i_k j} V(i_1,...,i_{k-1}, j, i_{k+1},...,i_K)\bigg)
\end{equation*}
where $(i_1,...,i_K) \in \prod_{m=1}^M \mathcal{X}^{(m)}$.

In short, this non-optimality arises due to the fact that the robust Bellman equation introduces dependency between bandits. In particular, at equilibrium, the adversary (who determines the transition probabilities for each bandit) may choose differently depending on the state of \emph{all} bandits, rather than just the bandit of interest. 

The index \eqref{Caro_Gupta_robust_Gittins} also can be interpreted as a Lagrangian relaxation of the optimal control problem (see also Gocgun and Ghate \cite{Lagrangian_relaxation}). The natural question that arises is,  `Does this relaxation satisfy some adjusted notion of optimality?'

In this paper, we propose a new form of optimality in terms of compensators of the value function. This can be seen as a relaxation of the dynamic programming principle through the martingale optimality principle,  in order to address a control problem under an inconsistent nonlinear operator. We will show that the strategy given by robust Gittins index satisfies this optimality criteria. We also allow the cost to be continuous valued and non-Markovian as in El Karoui and Karatzas \cite{Karatzas_El_Karoui_discrete}. This allows the study of various numerical methods to estimate our probabilistic state in the learning problem, whereas the numerical method in Caro and Gupta \cite{finite_robust_gittins} is limited to finite state Markov process. A simple numerical example then allows us to observe some qualitative peculiarities given the interaction between uncertainty aversion and learning.
\begin{remark}
In a non-Markovian framework, Whittle indexibility is not well-defined. Hence, the interpretation of optimality is required to understand a solution to the multi-armed bandit problem under uncertainty aversion.
\end{remark}
\begin{remark}
	Li \cite{Bandit_multiple_prior} considers a Bayesian formulation for the index but allowing for multiple priors. The focus  is on describing how the set of uncertainty affects the index, but without proving any form of Whittle indexibility. Our models also verify and generalize these results.
\end{remark}

\section{Uncertainty, Nonlinear Expectation and Optimality}
\label{uncertainty}
In this section, we will outline how `nonlinear expectation' operators can be used to model Knightian uncertainty. We will also discuss how we can use these tools to study a control problem, under uncertainty, while retaining some form of time consistency. We will build on the modelling framework of El Karoui and Karatzas \cite{Karatzas_El_Karoui_discrete} as proposed in Assumption \ref{model_assumption}.

We will first outline our setup and the additional assumptions we use in our study of the robust bandit problem. We will use a `nonlinear expectation' $\mathcal{E}^{(m)}$ (Assumption \ref{assump:existence of nonlinear}) to model uncertainty on the space $\big(\Omega^{(m)}, \mathbb{P}^{(m)}, (\mathcal{F}^{(m)}_t)_{t \geq 0}\big)$ of a single bandit, and then extend our uncertainty to the orthant joint space (Definition \ref{orthant space}) via the combined nonlinear expectation $\mathfrak{E}$ (Definition \ref{orthant expectation}). We will omit the superscript $(m)$ when it is clear from context.

In order to avoid technical difficulties, we will make the following assumption on the cost processes.
\begin{assumption}
	\label{assumption for cost process}
	For each $m \in \mathcal{M}$, there exists $C^{(m)}<\infty$ such that  
	$$0 \leq h^{(m)}(t) \leq C^{(m)} \qquad \text{and} \qquad  h^{(m)}(t) \to C^{(m)} \;\; \text{as} \;\; t \to \infty \;\; \mathbb{P} \text{-a.s.}$$
\end{assumption}
Assumption \ref{assumption for cost process} is purely technical. We may replace boundedness of $h^{(m)}$ by an integrability assumption on the total discounted cost (as in \cite{Karatzas_El_Karoui_discrete}); we then need to generalize the domain of the nonlinear expectation. We can also remove the assumption on the convergence of $h^{(m)}$ to its bound, but we then need to take more care to ensure that the stopping times we considered in \eqref{Caro_Gupta_robust_Gittins} and elsewhere can be assumed to be a.s. finite. Given the discount factor, this assumption does not have large impact on our modelling.

\subsection{Nonlinear Expectations and Time Consistency}
We now focus on the filtered probability space $\left(\Omega, \mathbb{P},\left(\mathcal{F}_t\right)_{t \geq 0}\right)$ modelling the returns from playing a single bandit. As in Peng \cite{g-expectation}, we define a nonlinear expectation as follows:

\begin{definition}
	\label{nonlinear expectation}
	A system of operators \[\mathcal{E}\left(\; \cdot \; | \mathcal{F}_t \right) : 
	L^\infty(\mathbb{P}, \mathcal{F}_{\infty}) \to L^\infty(\mathbb{P}, 
	\mathcal{F}_t)\] for $t \in \mathbb{T} := \{0,1, 2, ...\}$ is 
	said to be an 
	$\left(\mathcal{F}_t\right)_{t \geq 0}$-\emph{consistent coherent nonlinear expectation} if it satisfies the following properties: for $ X_n, X, Y \in L^\infty(\mathcal{F}_{\infty})$ and $c \in L^\infty(\mathcal{F}_t)$,  with all (in)equalities holding  $\mathbb{P}$-a.s, we have
	\begin{enumerate}[(i)]
	\item \emph{Strict Monotonicity}: If $X \geq Y$ then $\mathcal{E}(X | \mathcal{F}_t) \geq \mathcal{E}(Y | \mathcal{F}_t)$ 
	If, in addition, $\mathcal{E}(X| \mathcal{F}_t) =  \mathcal{E}(Y | \mathcal{F}_t)$,  then $X = Y$.
	\item $(\mathcal{F}_t)_{t\ge 0}$-\emph{Translation Equivariance}:  $\mathcal{E}( X + c | \mathcal{F}_t) = \mathcal{E}( X | \mathcal{F}_t) + c$.
	\item\emph{Subadditivity}: $\mathcal{E}(X + Y | \mathcal{F}_t) \leq \mathcal{E}(X | \mathcal{F}_t) + \mathcal{E}( Y | \mathcal{F}_t)$.
	\item $\left(\mathcal{F}_t\right)_{t\ge 0}$-\emph{Positive Homogeneity}: $\mathcal{E}(c X | \mathcal{F}_t) = c \;	\mathcal{E}(X | \mathcal{F}_t)$  if $c \geq 0$.
	\item \emph{Lebesgue property}: If $\{X_n\}_{n\in\mathbb{N}}$ is uniformly $\mathbb{P}$-a.s. bounded and $X_n \to X$ $\mathbb{P}$-a.s.~then $\mathcal{E}(X_n | \mathcal{F}_t) \to	\mathcal{E}(X | \mathcal{F}_t)$ $\mathbb{P}$-a.s.
\item \emph{$\left(\mathcal{F}_t\right)_{t \geq 0}$-consistency}: for $0 \leq s \leq t$,
		$\mathcal{E}(X | \mathcal{F}_s ) = 
		\mathcal{E}\big( 
		\mathcal{E}(X | \mathcal{F}_t ) \big| 
		\mathcal{F}_s
		\big)$.
	\end{enumerate}
We write $\mathcal{E}(\; \cdot \;)$ for $\mathcal{E}\big(\cdot \big| \mathcal{F}_0 
\big)$.
\end{definition}
\begin{remark}
For simplicity, we assume the Lebesgue property throughout this paper. In the static case, upper semi-continuity can be shown to be equivalent to the Lebesgue property over $L^\infty$ (see \cite[Corollary 4.38]{stoc_fin}). Moreover, if the operator $\mathcal{E}$ is induced by a BSDE (as in \cite{general_discrete_BSDE, discrete_BSDE_and_consistent_nonlinear, g-expectation, review_BSDE_and_finance} and many other papers), then the Lebesgue property typically follows from the $L^2$-continuous dependence of the BSDE on its terminal value.
\end{remark}
\begin{remark}
	\label{remark: regularity}
	It is also known (see e.g. Detlefsen and Scandolo \cite{Dynamic_risk}) that 
	any coherent nonlinear expectation satisfies the 
	\textit{$(\mathcal{F}_t)$-regularity} 
	property. That is, for any $X,Y \in 
	L^\infty(\mathcal{F}_{\infty})$ and $A \in 
	\mathcal{F}_t$,
	$$\mathcal{E}\big(X\mathbb{I}_A + Y\mathbb{I}_{A^c} \big| \mathcal{F}_t\big) = 
	\mathbb{I}_A\mathcal{E}\big(X \big| \mathcal{F}_t \big) + 
	\mathbb{I}_{A^c}\mathcal{E}\big(Y\big| \mathcal{F}_t \big).$$
	In particular, $\mathcal{E}\big(X\mathbb{I}_A \big| \mathcal{F}_t\big) = 
	\mathbb{I}_A\mathcal{E}\big(X \big| \mathcal{F}_t\big)$.
\end{remark}

In order to study decision making, we often require a conditional expectation defined at a stopping time. As we are working in discrete time, this is an easy construction.
\begin{definition}
	Given a consistent coherent nonlinear expectation $\mathcal{E}$ and a 
	stopping time $\tau \leq T$, we define the conditional expectation at $\tau$ by
	\begin{eqnarray*}
		\mathcal{E}\big( \cdot \big| \mathcal{F}_\tau \big) \; : 
		L^\infty(\mathcal{F}_\infty)  \longrightarrow  L^\infty(\mathcal{F}_\tau), \qquad		X  \longmapsto  \sum_{t=0}^{\infty}\mathbb{I}(\tau = t) \;	\mathcal{E}\big(X 
		\big| \mathcal{F}_t \big).
	\end{eqnarray*}	
\end{definition}
With this definition, the following easy observations can be made.
\begin{proposition}
	\label{Prop:consistence on stopping time}
	The operator $	\mathcal{E}\big( \cdot  \big| \mathcal{F}_\tau 
	\big)$ satisfies the conditions of Definition \ref{nonlinear expectation} with $s$ and $t$ are replaced by stopping times.
\end{proposition}
	
Nonlinear expectations are well suited to the study of Knightian uncertainty, that is, uncertainty over the probability measure. This is most easily seen through the robust representation theorem (over a finite horizon) given by Artzner et al. \cite{Artzner}, see also F\"ollmer and Schied \cite{stoc_fin} and Frittelli and Rosazza-Gianin \cite{Frittelli_and_Rosazza}. Extensions to a dynamic setting are also considered by Detlefsen and Scandolo \cite{Dynamic_risk}, F\"ollmer and Schied \cite{stoc_fin} and Riedel \cite{Independent_of_tau_for_risk_measure}. We state a version of this result which is dynamic over stopping times.

\begin{theorem}
	\label{Thm:dynamic robust rep}
		Let $\mathcal{E}$ be a consistent coherent nonlinear expectation.  If there exists $T < \infty$ such that $\mathcal{F}_\infty = \mathcal{F}_T$,  then $\mathcal{E}$ admits the representation 
	$$\mathcal{E}\big( \cdot \big| \mathcal{F}_\tau\big)  =
	\esssup_{\mathbb{Q} \in {\mathcal{Q}}} 
	\mathbb{E}^\mathbb{Q}\big( \cdot \big| \mathcal{F}_\tau\big)$$
	where $\tau$ is a stopping time and ${\mathcal{Q}} \subseteq 
	\left\{\mathbb{Q}:\mathbb{Q} \approx
	\mathbb{P} \right\}$, and the essential supremum is taken in $L^\infty(\mathcal{F}_\tau, \mathbb{P})$.
	\begin{proof}
		See F\"ollmer and Schied \cite[Theorem 11.22]{stoc_fin} (with further discussion in F\"ollmer and Penner \cite{sensitivity_ref}).  The sensitivity assumption assumed in these references (i.e. for every nonnegative nonconstant $X \in L^\infty(\mathcal{F}_T)$, there exists $\lambda > 0$ such that $\mathcal{E}\big(\lambda X \big) > 0$) follows from strict monotonicity in our definition.
	\end{proof}
\end{theorem}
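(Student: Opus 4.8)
The plan is to reduce the dynamic statement to the classical static robust representation for coherent risk measures and then propagate it through the filtration using the module structure and consistency of $\mathcal{E}$.

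First I would treat the unconditional operator $\mathcal{E}(\cdot) = \mathcal{E}(\cdot\,|\,\mathcal{F}_0)$. Properties (i)--(v) say precisely that $X \mapsto \mathcal{E}(X)$ is a monotone, translation-equivariant, sublinear (subadditive and positively homogeneous) functional on $L^\infty(\mathcal{F}_T)$, and the Lebesgue property (vi) supplies the order-continuity needed to restrict the dual to countably additive measures. This is exactly a coherent risk measure, so the Fenchel--Moreau / separating-hyperplane argument (Artzner et al.~\cite{Artzner}, F\"ollmer--Schied \cite{stoc_fin}) yields $\mathcal{E}(X) = \sup_{\mathbb{Q}\in\mathcal{Q}}\mathbb{E}^\mathbb{Q}[X]$ for a convex set $\mathcal{Q}$ of measures absolutely continuous with respect to $\mathbb{P}$. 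The role of strict monotonicity (i) is to secure the sensitivity (relevance) property stated in the excerpt's proof, which is the hypothesis under which the representing set can be taken inside $\{\mathbb{Q}\approx\mathbb{P}\}$ rather than merely $\{\mathbb{Q}\ll\mathbb{P}\}$.

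Next I would localize to a deterministic time $t$. The $\mathcal{F}_t$-translation equivariance (ii) and $\mathcal{F}_t$-positive homogeneity (v) make $\mathcal{E}(\cdot\,|\,\mathcal{F}_t)$ an $L^\infty(\mathcal{F}_t)$-module map, which is the structural hypothesis behind the conditional robust representation (Detlefsen--Scandolo \cite{Dynamic_risk}, F\"ollmer--Schied \cite{stoc_fin}). Applying it gives $\mathcal{E}(X\,|\,\mathcal{F}_t) = \esssup_{\mathbb{Q}\in\mathcal{Q}}\mathbb{E}^\mathbb{Q}(X\,|\,\mathcal{F}_t)$ with the essential supremum taken in $L^\infty(\mathcal{F}_t)$ and the same family $\mathcal{Q}$. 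Here consistency (vii) is what guarantees that one stable (m-stable) set $\mathcal{Q}$ represents $\mathcal{E}$ simultaneously at every $t$: without time-consistency the sets produced at different times need not be compatible with the tower property.

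Finally I would pass from deterministic times to a stopping time $\tau$. Because we are in discrete time, the definition $\mathcal{E}(X\,|\,\mathcal{F}_\tau) = \sum_{t=0}^{T}\mathbb{I}(\tau=t)\,\mathcal{E}(X\,|\,\mathcal{F}_t)$ lets me paste the time-$t$ representations together; the same pasting applied to ordinary conditional expectations gives $\mathbb{E}^\mathbb{Q}(X\,|\,\mathcal{F}_\tau) = \sum_{t}\mathbb{I}(\tau=t)\,\mathbb{E}^\mathbb{Q}(X\,|\,\mathcal{F}_t)$, and since multiplication by the disjoint indicators $\mathbb{I}(\tau=t)$ commutes with the essential supremum I recover $\mathcal{E}(X\,|\,\mathcal{F}_\tau) = \esssup_{\mathbb{Q}\in\mathcal{Q}}\mathbb{E}^\mathbb{Q}(X\,|\,\mathcal{F}_\tau)$. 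The hard part is not this pasting but the previous step: producing a single representing set $\mathcal{Q}\subseteq\{\mathbb{Q}\approx\mathbb{P}\}$ valid across all times, which rests on time-consistency (vii) together with the measurable-selection machinery behind the conditional Fenchel--Moreau duality. This is exactly why the statement invokes the dynamic references rather than the purely static representation theorem.
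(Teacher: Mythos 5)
Your proposal is correct and follows essentially the same route as the paper: the paper's proof consists precisely of citing the static/dynamic representation theory (F\"ollmer--Schied Theorem 11.22, with F\"ollmer--Penner for the sensitivity discussion), with strict monotonicity supplying the sensitivity hypothesis that upgrades $\mathbb{Q}\ll\mathbb{P}$ to $\mathbb{Q}\approx\mathbb{P}$ --- exactly the roles you assign to these ingredients, including time-consistency as the source of a single (stable) representing set valid at all times. The only step you make explicit that the paper leaves implicit is the passage from deterministic times to stopping times, which is indeed immediate from the paper's definition $\mathcal{E}(X\,|\,\mathcal{F}_\tau)=\sum_{t}\mathbb{I}(\tau=t)\,\mathcal{E}(X\,|\,\mathcal{F}_t)$ together with the fact that multiplication by the disjoint indicators $\mathbb{I}(\tau=t)$ commutes with the essential supremum over $\mathcal{Q}$.
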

\begin{remark}
	Theorem \ref{Thm:dynamic robust rep} can be obtained by construction via considering the stability of the pasting in the family $\mathcal{Q}$. (See e.g. Bion-Nadal \cite{cocycle_condition} and Artzner et al. \cite{Artzner_risk_measure_independent_tau}).
\end{remark}
\subsection{Uncertainty on multiple bandits}
\label{sec: join nonlinear}
In the classical Gittins theorem, independence is crucial 
to separate the behaviour of different bandits. In the robust representation (Theorem \ref{Thm:dynamic robust rep}) we have seen that a nonlinear expectation can be viewed as the supremum of classical expectations over a family of probability measures. Therefore, the notion of independence between bandits becomes ambiguous, as statistical independence is based on the probability measure. Thanks to our explicit construction of the space (Definition \ref{orthant space}), we can explicitly construct a nonlinear expectation space where each bandit remains independent. 

\begin{remark}
	In \cite{G_expectation}, Peng proposed a definition of independence for a nonlinear expectation. In his approach, independence is not a symmetric relation, but typically describes independence based on the order of events: often  `$Y$ is independent of $X$' when $Y$ occurs after $X$. In the setting of multiple bandits, the order of events cannot be pre-identified, as it depends on the control chosen. Hence, it is not clear how to exploit the independence notion of \cite{G_expectation} in this setting.
\end{remark}
Let us make the last universal assumption in our paper, which describes model uncertainty for each individual bandit in our problem inspired by the robust representation (Theorem \ref{Thm:dynamic robust rep}). 
\begin{assumption}
	\label{assump:existence of nonlinear}
	For each $m \in \mathcal{M}$, we have a  $(\mathcal{F}^{(m)}_t)_{t\ge 0}$-consistent coherent nonlinear expectation, $(\mathcal{E}^{(m)}(\; \cdot \; | \mathcal{F}_\tau))_{\tau \in \mathcal{T}^{(m) }}$ defined on the space $L^\infty(\Omega^{(m)}, \mathbb{P}^{(m)})$ which admits the representation
$$\mathcal{E}^{(m)}\Big( \; \cdot \;  \Big| 
\mathcal{F}^{(m)}_{S^{(m)}}\Big) = \esssup_{\mathbb{Q} \in 
	\mathcal{Q}^{(m)}} \mathbb{E}^\mathbb{Q}\Big( \; \cdot \;
\Big| \mathcal{F}^{(m)}_{S^{(m)}}\Big)$$
whenever ${S}^{(m)}$ is an $(\mathcal{F}^{(m)}_t)$-stopping time.
\end{assumption}

\begin{definition}
	\label{orthant expectation}
	We define the \emph{partially consistent orthant nonlinear expectation} $\left(\mathfrak{E}_S\right)_{S \in \mathfrak{T}(\mathcal{S})}$,  to be the family of operators
	\begin{eqnarray*}
		\mathfrak{E}_S : L^\infty\left(\bar{\Omega}, \bar{\mathbb{P}}, \mathcal{F}(\infty) \right) & \longrightarrow & L^\infty\left(\bar{\Omega}, \bar{\mathbb{P}}, \mathcal{F}(S) \right)  \\
		X & \longmapsto & \esssup_{\mathbb{Q} \in 
			{\mathcal{Q}}}\mathbb{E}^{\mathbb{Q}}\big(X \big| 
		\mathcal{F}(S)\big)
	\end{eqnarray*}
	where, with $\mathcal{Q}^{(m)}$ as in Assumption \ref{assump:existence of nonlinear},
	$${\mathcal{Q}} := \bigg\{\bigotimes_{m=1}^M \mathbb{Q}^{(m)} \;\; \text{for} \;\; \mathbb{Q}^{(m)} \in \mathcal{Q}^{(m)}\bigg\}.$$
	We also write $\mathfrak{E}$ for $\mathfrak{E}_0$.
\end{definition}
\begin{remark}
	As $\bar{\mathbb{P}} = \bigotimes_{m=1}^M \mathbb{P}^{(m)}$ is a dominating measure for $\mathcal{Q}$, we easily observe that if $X = Y$ $\bar{\mathbb{P}}$-a.s., then $\mathfrak{E}_S(X) = \mathfrak{E}_S(Y)$ $\bar{\mathbb{P}}$-a.s. for all $S\in \mathfrak{T}(\mathcal{S})$.
\end{remark}

\begin{proposition}
	\label{property of joining expectation}
	The system of operators $(\mathfrak{E}_S)$ satisfies the following properties.
	\begin{enumerate}[(i)]
		\item The properties (i)-(v) in Definition \ref{nonlinear expectation}
 (with appropriate replacements on the operator and $\sigma$-algebra) hold for the operator $\mathfrak{E}_S$ $\; : S \in \mathfrak{T}(\mathcal{S})$ (i.e. strict monotonicity, translation equivariance, subadditivity, positive homogeneity and the Lebesgue property hold for $\mathfrak{E}$).		
		\item Sub-consistency: For $S, S' \in \mathfrak{T}(\mathcal{S})$ with $S \leq S'$, we have
		$$\mathfrak{E}_S \left( \; \cdot \; \right) \leq \mathfrak{E}_S 
		\left(\mathfrak{E}_{S'} \left( \; \cdot \; \right) \right) \;\;\; \bar{\mathbb{P}} \text{-a.s.}$$
		In particular, for any measurable $X$, if $\mathfrak{E}_{S'}(X)\leq 0$ $\bar{\mathbb{P}}$-a.s., then for any $A\in \mathcal{F}_{S'}$ we have $\mathfrak{E}_{S}(\mathbb{I}_A X)\leq 0$ $\bar{\mathbb{P}}$-a.s.
		\item Independence: Let $Y$ be a random variable on $(\bar{\Omega}, \mathcal{F}(\infty))$ given by
		$$Y(\omega^{(1)},...,\omega^{(M)}) = 
		X^{(1)}(\omega^{(1)}) \times \cdots \times X^{(M)}(\omega^{(M)}) \qquad \bar{\mathbb{P}}\text{-a.s.},$$
		where, for each $m\in \mathcal{M}$, we have a non-negative random variable $X^{(m)}$  defined on $(\Omega^{(m)}, \mathcal{F}^{(m)}_\infty)$. Then 
		$$\mathfrak{E}_S(Y) = \mathcal{E}^{(1)}\left( X^{(1)} \Big| 
		\mathcal{F}^{(1)}_{S^{(1)}}\right)\times \cdots \times\mathcal{E}^{(M)}\left( X^{(M)} 
		\Big| \mathcal{F}^{(M)}_{S^{(M)}}\right) \;\;\; \bar{\mathbb{P}} \text{-a.s.}$$
		\item Marginal projection: For a given $m \in \mathcal{M}$, let $X$ be a random variable defined on $(\Omega^{(m)}, 
		\mathcal{F}^{(m)}_{\infty})$. Define $\tilde{X} : \bar{\Omega} \to \mathbb{R}$ by
		$\tilde{X}(\omega^{(1)},...,\omega^{(M)}) = 
		X(\omega^{(m)})$.
		We then have
		$$\mathfrak{E}_S(\tilde{X}) = \mathcal{E}^{(m)}\big( X 
		\big| \mathcal{F}^{(m)}_{S^{(m)}}\big) \;\;\; \bar{\mathbb{P}} \text{-a.s.}$$	
	\end{enumerate}
	\begin{proof}
		See Proposition \ref{property of joining expectation: proof} in the appendix .
	\end{proof}
\end{proposition}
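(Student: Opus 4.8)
I would organise everything around the single representation $\mathfrak{E}_S(X)=\esssup_{\mathbb{Q}\in\mathcal{Q}}\mathbb{E}^{\mathbb{Q}}(X\mid\mathcal{F}(S))$, using throughout that every $\mathbb{Q}=\bigotimes_m\mathbb{Q}^{(m)}\in\mathcal{Q}$ is equivalent to $\bar{\mathbb{P}}=\bigotimes_m\mathbb{P}^{(m)}$ (being a product of marginals $\mathbb{Q}^{(m)}\approx\mathbb{P}^{(m)}$), and that each marginal family $\{\mathbb{E}^{\mathbb{Q}^{(m)}}(\cdot\mid\mathcal{F}^{(m)}_{S^{(m)}})\}_{\mathbb{Q}^{(m)}\in\mathcal{Q}^{(m)}}$ is upward directed (stable under pasting), so that each $\mathcal{E}^{(m)}(\cdot\mid\mathcal{F}^{(m)}_{S^{(m)}})$ is an increasing a.s.-limit of a sequence from the family. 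For part (i), monotonicity, translation equivariance, positive homogeneity, subadditivity and normalisation are inherited termwise by each linear $\mathbb{E}^{\mathbb{Q}}(\cdot\mid\mathcal{F}(S))$ and preserved under the essential supremum (for equivariance and homogeneity one uses that the shift/multiplier is $\mathcal{F}(S)$-measurable, hence pulls through the conditional expectation). Lower semi-continuity I would get from conditional dominated convergence: for uniformly bounded $X_n\to X$ a.s.\ one has $\mathbb{E}^{\mathbb{Q}}(X_n\mid\mathcal{F}(S))\to\mathbb{E}^{\mathbb{Q}}(X\mid\mathcal{F}(S))$ and $\mathbb{E}^{\mathbb{Q}}(X_n\mid\mathcal{F}(S))\le\mathfrak{E}_S(X_n)$, so $\mathbb{E}^{\mathbb{Q}}(X\mid\mathcal{F}(S))\le\liminf_n\mathfrak{E}_S(X_n)$, and taking the essential supremum in $\mathbb{Q}$ gives $\mathfrak{E}_S(X)\le\liminf_n\mathfrak{E}_S(X_n)$. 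Strict monotonicity is the one nontrivial point of (i): it follows from sensitivity of $\mathfrak{E}_S$, which holds because each $\mathbb{Q}$ is equivalent to $\bar{\mathbb{P}}$, so for $X\ge Y$ with $\bar{\mathbb{P}}(X>Y)>0$ the difference $Z=X-Y$ has $\mathbb{E}^{\mathbb{Q}}(Z\mid\mathcal{F}(S))>0$ on a set of positive measure for every $\mathbb{Q}$; combining this with the upward-directedness of $\mathcal{Q}$ rules out $\mathfrak{E}_S(X)=\mathfrak{E}_S(Y)$.

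For part (ii), fix $\mathbb{Q}\in\mathcal{Q}$. Since $S\le S'$ gives $\mathcal{F}(S)\subseteq\mathcal{F}(S')$, the linear tower property yields $\mathbb{E}^{\mathbb{Q}}(X\mid\mathcal{F}(S))=\mathbb{E}^{\mathbb{Q}}\big(\mathbb{E}^{\mathbb{Q}}(X\mid\mathcal{F}(S'))\mid\mathcal{F}(S)\big)\le\mathbb{E}^{\mathbb{Q}}\big(\mathfrak{E}_{S'}(X)\mid\mathcal{F}(S)\big)\le\mathfrak{E}_S(\mathfrak{E}_{S'}(X))$, where the first inequality is $\mathbb{E}^{\mathbb{Q}}(X\mid\mathcal{F}(S'))\le\mathfrak{E}_{S'}(X)$ together with monotonicity, and the second is the definition of the essential supremum. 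Taking the essential supremum over $\mathbb{Q}$ on the left gives sub-consistency. For the corollary, if $\mathfrak{E}_{S'}(X)\le0$ and $A\in\mathcal{F}(S')$, then positive homogeneity with the $\mathcal{F}(S')$-measurable multiplier $\mathbb{I}_A$ gives $\mathfrak{E}_{S'}(\mathbb{I}_A X)=\mathbb{I}_A\,\mathfrak{E}_{S'}(X)\le0$, whence sub-consistency and monotonicity give $\mathfrak{E}_S(\mathbb{I}_A X)\le\mathfrak{E}_S(\mathfrak{E}_{S'}(\mathbb{I}_A X))\le\mathfrak{E}_S(0)=0$. The only genuine ingredient is the tower property for $\mathbb{E}^{\mathbb{Q}}$ across the orthant stopping-time $\sigma$-algebras, which holds because $\mathcal{F}(S)$ and $\mathcal{F}(S')$ are built as products of marginal stopping-time $\sigma$-algebras and each $S^{(m)}$ depends only on $\omega^{(m)}$.

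Parts (iv) and (iii) both exploit independence under each product measure, so I would prove the easier projection first. For (iv), regarding $\tilde X$ as a function of $\omega^{(m)}$ alone, independence under $\mathbb{Q}=\bigotimes_k\mathbb{Q}^{(k)}$ gives $\mathbb{E}^{\mathbb{Q}}(\tilde X\mid\mathcal{F}(S))=\mathbb{E}^{\mathbb{Q}^{(m)}}(X\mid\mathcal{F}^{(m)}_{S^{(m)}})$, which depends on $\mathbb{Q}$ only through $\mathbb{Q}^{(m)}$; hence the essential supremum over $\mathcal{Q}$ collapses to the supremum over $\mathcal{Q}^{(m)}$, namely $\mathcal{E}^{(m)}(X\mid\mathcal{F}^{(m)}_{S^{(m)}})$. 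No sign restriction is needed here, because only a single factor varies with the measure. For (iii) I would first establish the conditional factorisation $\mathbb{E}^{\mathbb{Q}}(Y\mid\mathcal{F}(S))=\prod_m\mathbb{E}^{\mathbb{Q}^{(m)}}(X^{(m)}\mid\mathcal{F}^{(m)}_{S^{(m)}})$ for each fixed product measure, by checking the defining property on product rectangles $A=\prod_m A^{(m)}$ with $A^{(m)}\in\mathcal{F}^{(m)}_{S^{(m)}}$ (which generate $\mathcal{F}(S)$) and factoring the integrals by Tonelli; the stopping times cause no difficulty since each $S^{(m)}$ is $\mathcal{F}^{(m)}$-measurable.

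The interchange of essential supremum and product in (iii) is where I expect the real work, and it is the main obstacle. Writing $f_m(\mathbb{Q}^{(m)})=\mathbb{E}^{\mathbb{Q}^{(m)}}(X^{(m)}\mid\mathcal{F}^{(m)}_{S^{(m)}})\ge0$, the bound $\mathfrak{E}_S(Y)\le\prod_m\mathcal{E}^{(m)}(X^{(m)}\mid\mathcal{F}^{(m)}_{S^{(m)}})$ is immediate from $f_m\le\mathcal{E}^{(m)}$ together with non-negativity, so that the products compare. For the reverse, I would use upward directedness to pick, for each $m$, an increasing sequence $\mathbb{Q}^{(m)}_k$ with $f_m(\mathbb{Q}^{(m)}_k)\uparrow\mathcal{E}^{(m)}(X^{(m)}\mid\mathcal{F}^{(m)}_{S^{(m)}})$ a.s.; evaluating $\mathfrak{E}_S$ at the product measures $\bigotimes_m\mathbb{Q}^{(m)}_k$ and passing to the monotone finite limit of $\prod_m f_m(\mathbb{Q}^{(m)}_k)$ yields $\mathfrak{E}_S(Y)\ge\prod_m\mathcal{E}^{(m)}(X^{(m)}\mid\mathcal{F}^{(m)}_{S^{(m)}})$. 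Non-negativity of the $X^{(m)}$ is essential throughout (both for comparing products and for the monotone passage to the limit), which is precisely the hypothesis imposed in (iii); the two subtle points to get right are the justification that each marginal family is upward directed (via stability under pasting, guaranteed by coherence of $\mathcal{E}^{(m)}$) and the measurability bookkeeping in the Tonelli factorisation across the orthant stopping-time $\sigma$-algebras.
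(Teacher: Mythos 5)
Your parts (ii)--(iv) are correct and essentially follow the paper's own route: the tower property under each fixed product measure followed by the essential-supremum bound gives sub-consistency, and the conditional Fubini factorisation plus the interchange of product and essential supremum (upper bound from non-negativity, lower bound from marginal directedness, or alternatively from countable attainment of each marginal essential supremum) gives independence and marginal projection. Your treatment of these is in fact more detailed than the paper's appendix proof, and your remark that (iv) needs no sign restriction is accurate.

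The gap is in part (i), strict monotonicity. The first problem is that the ``upward-directedness of $\mathcal{Q}$'' you invoke fails at the joint level. Each \emph{marginal} family $\{\mathbb{E}^{\mathbb{Q}^{(m)}}(\cdot\mid\mathcal{F}^{(m)}_{S^{(m)}})\}_{\mathbb{Q}^{(m)}\in\mathcal{Q}^{(m)}}$ is indeed directed, because pasting two measures of $\mathcal{Q}^{(m)}$ on an $\mathcal{F}^{(m)}_{S^{(m)}}$-measurable event stays inside $\mathcal{Q}^{(m)}$. But to dominate two members of $\{\mathbb{E}^{\mathbb{Q}}(\cdot\mid\mathcal{F}(S))\}_{\mathbb{Q}\in\mathcal{Q}}$ one must paste conditional kernels on an event $A\in\mathcal{F}(S)$, and such an $A$ is generally not a product event, so the pasted measure is no longer a product measure and leaves $\mathcal{Q}$; this is exactly the mechanism behind the failure of consistency in Example \ref{independence exmp}. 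Concretely: let bandit $1$ have two $\{0,1\}$ observations $(a,c)$, with $a$ uniform and the conditional law of $c$ given $a$ an arbitrary kernel $q(\cdot)$ valued in $[1/4,3/4]$; let bandit $2$ have one uniform observation $b$; take $S=(1,1)$ and $W=\mathbb{I}(a=0)\big(\mathbb{I}(b=0)\mathbb{I}(c=1)+\mathbb{I}(b=1)\mathbb{I}(c=0)\big)$. Then $\mathbb{E}^{\mathbb{Q}}(W\mid\mathcal{F}(S))$ equals $q(0)$ on $\{a=0,b=0\}$ and $1-q(0)$ on $\{a=0,b=1\}$, and no admissible kernel dominates the value $3/4$ on both events simultaneously, so the joint family is not directed.

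The second, more fundamental, problem is that sensitivity deduced from equivalence of the measures is strictly weaker than strict monotonicity, so your argument cannot close even where directedness holds: it would only produce a sequence $\mathbb{Q}_n\in\mathcal{Q}$ with $\mathbb{E}^{\mathbb{Q}_n}(X-Y\mid\mathcal{F}(S))\to 0$ a.s., which is no contradiction unless $\essinf_{\mathbb{Q}\in\mathcal{Q}}\mathbb{E}^{\mathbb{Q}}(X-Y\mid\mathcal{F}(S))$ is positive on a set of positive measure. Equivalence alone cannot deliver this: the family of \emph{all} measures equivalent to a reference measure consists of equivalent measures, yet its upper expectation is the essential supremum of the random variable, which is not strictly monotone. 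The ingredient your proof never uses, and must, is the assumed strict monotonicity of the marginals $\mathcal{E}^{(m)}$ (Assumption \ref{assump:existence of nonlinear}): for a coherent upper expectation, strict monotonicity is equivalent to the associated lower expectation being strictly positive on non-negative, non-null variables, and this positivity passes to the product family by iterated conditional Fubini, since $\mathbb{E}^{\bigotimes_m\mathbb{Q}^{(m)}}(Z\mid\mathcal{F}(S))$ is bounded below by the iterated marginal lower expectations of $Z$, which are positive on a set of positive probability whenever $Z\geq 0$ and $\bar{\mathbb{P}}(Z>0)>0$. With that lemma, the inequality $\mathfrak{E}_S(X)\geq\mathfrak{E}_S(Y)+\essinf_{\mathbb{Q}\in\mathcal{Q}}\mathbb{E}^{\mathbb{Q}}(X-Y\mid\mathcal{F}(S))$ yields strict monotonicity; without it, part (i) remains unproven.
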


\begin{remark}
\label{rem: nonlinear expectation construction}
We deliberately choose our nonlinear expectation $\mathfrak{E}$ to be defined on a product space to simplify our discussion on the existence of the operator.  In fact, one can simply weaken our assumption by having a nonlinear expectation $\mathfrak{E}$ on a joint filtration (as in El Karoui and Karatzas \cite{Karatzas_El_Karoui_discrete}) such that the Proposition \ref{property of joining expectation} holds. All proofs are identical except the proof of Theorem \ref{positivity} (in the Appendix). We just need an extra step to show that the product of the marginal probability measure is also a probability measure considered under the robust representation of $\mathfrak{E}$. 
\end{remark}
In the proposition above, we have seen that $\mathfrak{E}$ is sub-consistent  on the orthant filtration. However,  $\mathfrak{E}$ is \emph{not} consistent in the sense of Definition \ref{nonlinear expectation}, i.e. if $S \leq S'$ (componentwise), it is not necessarily the case that $\mathfrak{E}_S(\; \cdot \;) = \mathfrak{E}_S \left( \mathfrak{E}_{S'}(\; \cdot \;) \right)$. A counterexample can be easily constructed based on the following:

\begin{exmp}
	\label{independence exmp}
	Let $X$ and $\tilde{X}$ be random variables taking values in $\{0,1\}$ and defined on different spaces $\Omega$ and $\tilde{\Omega}$. Let $\mathcal{Q}$ and $\tilde{\mathcal{Q}}$ be families of probability measures defined on these spaces. Suppose that for all $p \in [0,1]$ there exists $\mathbb{Q} \in \mathcal{Q}$ such that $\mathbb{Q}(X=0) = p$ and that for all  $\tilde{\mathbb{Q}} \in \tilde{\mathcal{Q}}$, $\tilde{\mathbb{Q}}(\tilde{X} = 0) = 1/2.$ Let $f: \mathbb{R}^2 \to \mathbb{R}$ be a given function. Then it is easy to show that
	\begin{align*}
	\sup_{\mathbb{Q},\tilde{\mathbb{Q}} } \mathbb{E}^{\mathbb{Q} \otimes \tilde{\mathbb{Q}}}\big(f(X,\tilde{X})\big) 
	& =  \sup_{{\mathbb{Q}}} \mathbb{E}^{{\mathbb{Q}}}\Big(\sup_{\tilde{\mathbb{Q}}} \mathbb{E}^{\tilde{\mathbb{Q}}}\big(f(x,\tilde{X})\big) \Big|_{{x} = {X}}\Big)  \\
	& =  \max \Big(\frac{f(0,0) + f(0,1)}{2}, \frac{f(1,0) + f(1,1)}{2}\Big)
	\end{align*}
	but
	\begin{align*}
	\sup_{\tilde{\mathbb{Q}}} \mathbb{E}^{\tilde{\mathbb{Q}}}\Big(\sup_{{\mathbb{Q}}} \mathbb{E}^{{\mathbb{Q}}}\big(f(X,\tilde{x})\big) \Big|_{\tilde{x} = \tilde{X}}\Big) = \frac{\max \{f(0,0),f(1,0)\}}{2} + \frac{\max\{f(0,1),f(1,1)\}}{2}.
	\end{align*}
	By considering $\mathcal{F}_1^{(1)} = \sigma(X)$ and $\mathcal{F}_1^{(2)} = \sigma(\tilde{X})$, and defining nonlinear expectation using supremum over the family $\mathcal{Q}$ and $\tilde{\mathcal{Q}}$, the above result shows that the joint operator $\mathfrak{E}$ is not consistent. In particular, we can find a function $f$ such that
	$$\mathfrak{E}\Big(\mathfrak{E}_{(0,1)}\Big(f(X, \tilde{X})\Big)\Big) \neq \mathfrak{E}\Big(\mathfrak{E}_{(1,0)}\Big(f(X, \tilde{X})\Big)\Big).$$
\end{exmp}
\subsection{Optimality}

We have discussed in the previous section that the robust Gittins index \eqref{Caro_Gupta_robust_Gittins} in the sense of Caro and Gupta \cite{finite_robust_gittins} is not optimal, as it does not lead to a solution of the robust Bellman equation  (discussed in \cite{Robust_DPP, Robust_Markov_DPP}). In order to understand what sense of optimality the robust index strategy \emph{does} satisfy, we will first consider a form of optimality criteria used by El Karoui and Karatzas \cite{Karatzas_El_Karoui_discrete}. 

Let us consider an abstract stochastic control problem on a space $(\bar{\Omega}, \bar{\mathcal{F}}, \bar{\mathbb{P}})$ in which a choice of control $\rho$ results in an instantaneous cost process $\big(g^\rho(n)\big)_{n \geq 1}$. We may view $g^\rho(n)$ as a cost occured at time $n$. For example, we have $g^\rho(n) := \beta^n h^{(\rho_{n-1})}(t^{\rho}_n)$ in \eqref{Alternative El Karoui and Karatzas Objective}. We can also define the filtration of information obtained up to time $n$ when following $\rho$ by 
\begin{equation}
\label{introduce decision filtration}
\mathcal{G}^\rho_n := \Big\{A \in \mathcal{F}(T) \; : \; A \cap \{\tilde{\eta}(n) = 
r\} \in \mathcal{F}(r) \;\;\; \forall r \in \mathcal{S} \Big\},
\end{equation}
where $\tilde{\eta}$ is the corresponding Mandelbaum allocation sequence (Definition \ref{Mandelbaum definition}, Remark \ref{Mandelbaum and simple seq}).
We will discuss this filtration in detail in Remarks \ref{simple form to Mandelbaum} and \ref{observed vs decision filtration}.

\begin{remark}
	It is clear from the definition that the strategy process $(\rho_n)_{n \geq 0}$ is $(\mathcal{G}^\rho_n)_{n \geq 0}$-adapted. We will show later that the cost process  $g^\rho(n) := \beta^n h^{(\rho_{n-1})}(t^{\rho}_n)$ (as in \eqref{Alternative El Karoui and Karatzas Objective}) is also adapted with respect to $\mathcal{G}^\rho_n$.
\end{remark}

Suppose that we are given a nonlinear expectation operator $\mathfrak{E}$, as in Definition \ref{orthant expectation}, and consider a minimization problem over the space of Mandelbaum allocation strategies, as represented by their equivalent form $\rho$ (Remark \ref{Mandelbaum and simple seq}). The process $\rho$ not only describes our strategy and the corresponding cost, but also determines the observed filtration. Therefore, at any point in time, it does not make sense to compare strategies unless those strategies yield the same information at the considered time. 

\begin{definition}
	We say strategies $\rho$ and $\rho'$ are \textit{historically equivalent} at time $N$, denoted by $\rho \sim_N \rho'$, if $\rho_n = \rho'_n$ for all $n \leq N$. 
\end{definition}

\begin{remark}
	For every strategy $\rho$, we have $\rho \sim_0 \rho^*$. 
\end{remark}
We can now give a standard form of optimality which is often considered when we have a consistent nonlinear expectation operator.
 
\begin{definition}
	\label{dynamic strong optimal}
	We say a strategy $\rho^*$ is a \emph{strong optimum} if for every strategy $\rho $ such that $\rho \sim_N \rho^*$, we have
	$$\mathfrak{E}\bigg(\mathbb{I}_A \bigg(\sum_{n=N+1}^\infty g^{\rho^*}(n)\bigg)\bigg) \leq \mathfrak{E}\bigg(\mathbb{I}_A \bigg(\sum_{n=N+1}^\infty g^{\rho}(n)\bigg)\bigg) \qquad \text{for all } A \in \mathcal{G}^\rho_N (= \mathcal{G}^{\rho^*}_N).$$
\end{definition}
\begin{remark}
When $\mathfrak{E}$ is replaced by an $(\mathcal{F}_n)$-consistent nonlinear expectation and $(\mathcal{G}^\rho_N)$ is replaced by $(\mathcal{F}_N)$,  strong optimality simplifies to
$$\mathcal{E}\bigg(\sum_{n=N+1}^\infty g^{\rho^*}(n)\bigg| \mathcal{F}_N\bigg) = \essinf_{\rho \sim_N \rho^*} \mathcal{E}\bigg(\sum_{n=N+1}^\infty g^{\rho}(n)\bigg| \mathcal{F}_N\bigg) .$$
\end{remark}

A standard approach to tackle the decision making under time-inconsistency (nonlinear expectation) operator is to define `the optimal strategy' through the solution of the robust Bellman equation \cite{Robust_DPP, Robust_Markov_DPP} as considered in Caro and Gupta \cite{finite_robust_gittins}.  Using the tower property,  we can show that the strong optimum under $(\mathcal{F}_n)$-consistent nonlinear expectation is equivalent to the solution to the robust Bellman equation. 

\subsection{C-Optimality}
In the bandit setting,  our nonlinear expectation is not necessary (time-)consistent. In order to understand the Gittins index strategy under an inconsistent operator, we propose an alternative notion of optimality, which is inspired by martingale optimality.

For motivation,  consider an $(\mathcal{F}_n)$-consistent nonlinear expectation $\mathcal{E}$. Suppose that we wish to solve the minimization problem
$$V_N = \essinf_{\rho} \mathcal{E}\bigg(\sum_{n=N+1}^\infty g^{\rho}(n)\bigg| \mathcal{F}_N\bigg).$$
For a given strategy $\rho$, we define a process 
$X^\rho_N := \sum_{n=1}^N g^{\rho}(n) + V_N.$
Under mild conditions, we know from the martingale optimality principle that $(X^\rho_N)$ is an $\mathcal{E}$-submartingale for every strategy $\rho$ and it is a martingale for an optimal strategy $\rho^*$.

By using the Doob--Meyer decomposition for nonlinear expectation (see e.g.  \cite[Theorem 8]{general_g-expectation}), we can write
 $$X^\rho_N := M^\rho_N + \sum_{n=1}^N C^\rho(n)$$
where $ (M^\rho_N)$ is an $\mathcal{E}$-martingale and $(C^\rho(n))$ is a non-negative predictable process with $C^\rho(n) \equiv 0$ for the optimal strategy $\rho^*$. 

By rearranging the equation above, for every $\rho$,
$$\mathcal{E}\bigg(\sum_{n=N+1}^\infty \Big(g^{\rho}(n) - C^\rho(n)\Big) \bigg| \mathcal{F}_N \bigg) = -V_N.$$
Moreover, for an optimal strategy $\rho^*$, we have
$$\sum_{n=N+1}^L  C^{\rho^*}(n) \leq \sum_{n=N+1}^L  C^{\rho}(n) \qquad \text{for all } N, L.$$

Inspired by the analysis above, we propose an alternative notion of optimality in an inconsistent setting. 
\begin{definition}
	\label{C-optimal} 
	We say a strategy $\rho^*$ is \textit{C-optimal} if there exists a $(\mathcal{G}^{\rho^*}_n)$-adapted process $(V_n)$ (called a \textit{value process}) and a collection of random variables
	$(C_N^\rho(n))_{N, n \geq N+1, \rho \sim_N \rho^*}$ (called a (sub-)compensator) such that
	\begin{enumerate}[(i)]
		\item $n \mapsto C_N^\rho(n)$ is a $(\mathcal{G}^\rho_n)$-predictable process,
		\item $N \mapsto C_N^\rho(n)$ is non-increasing,
		\item For every strategy $\rho \sim_N \rho^*$,
		\begin{equation}
		\label{eq: compensation}
		\mathfrak{E}\bigg(\mathbb{I}_A\sum_{n=N+1}^\infty \Big(g^{\rho}(n) - C_N^{\rho}(n)\Big)\bigg) \geq \mathfrak{E}\Big(-\mathbb{I}_A V_N\Big) \qquad \text{for all} \quad A \in \mathcal{G}^\rho_N
		\end{equation}
		with equality for $\rho =\rho^*$, 
		\item For every strategy $\rho \sim_N \rho^*$,
		\begin{equation}
	\label{C-optimal equation}
	\sum_{n=N+1}^L  C_N^{\rho^*}(n) \leq \sum_{n=N+1}^L  C_N^{\rho}(n) \qquad \text{for all } L \geq N +1.
		\end{equation}
	\end{enumerate}
\end{definition}

We can see $\big(C_N^\rho(n)\big)$ acts as `(sub-)compensator' to the cost, and $V_N$ acts as the value function.  This approach is loosely related to the capital requirement approach discussed by Frittelli and Scandolo \cite{capital_requirement}.  We can interpret Definition \ref{C-optimal} as requiring that the (sub-)compensators $(C_N^\rho(n))$
\begin{enumerate}[(i)]
\item is known one-step in advanced before observing the cost (i).
	\item consistently (sub-)compensate the cost. In particular, as time elapses, we obtain more information and thus require the same amount, or possibly less to (sub-)compensate (ii).
	\item complement the extra cost occurred for a sub-optimal strategy (iii).

	\item are bounded below by a compensator of a particular strategy $\rho^*$, which we call `optimal' (iv).
\end{enumerate}
\begin{remark}
	We have mentioned the robust Bellman equation \cite{Robust_DPP, Robust_Markov_DPP} as an approach to force time-consistency in our decision making. The fundamental idea of this approach is to freeze our value function and propagate its value backward in time. In particular, suppose we have $V^*_{n+1}$ as our expected remaining cost at time $n$. We then define an optimal strategy at time $n$ to be a strategy $\rho_n$ such that $g^\rho(n) +   V^*_{n+1}$ is optimized. 
	
A closely related approach to ensure time-consistency was proposed by Strotz \cite{Strotz_consistent} and Pollak \cite{Pollak_consistency} and developed further in Peleg and Yaari \cite{Peleg_yaari_consistent} and Koopmans \cite{Koopmans_consistency}. Recent extensions include Bj\"ork and Murgoci \cite{Bjoerk2014}, Bj\"ork, Khapko and Murgoci \cite{Bjoerk2017}, Yong \cite{Yong2012} and Hu, Jin and Zhou \cite{Hu2012}. For a problem with horizon $L$,  suppose that the optimal control is determined after time $n$,  in other words, $(\rho^*_{n+1},...,\rho^*_{L})$ is known. We then find a control $\rho^*_n$ at time $n$ to optimize over the space of possible strategies $\left\{\rho \;\; : \;\; (\rho_{n+1},...,\rho_{L}) = (\rho^*_{n+1},...,\rho^*_{L})\right\}$.  In this way, the (optimal) control rather than the value function, is constructed recursively. This idea is then extended by searching for (sub-game perfect) Nash equilibria, to allow for non-uniqueness of the optimal controls.
	
	As discussed in Section \ref{subsec: robust Gittins finite}, the robust Bellman approach may introduce some dependency between bandits in our system.  Hence,  Gittins index strategy is not optimal under that approach.  On the other hand, when considering a system of bandits, the measurability of our future states are determined by our current action. Therefore, the $\sigma$-algebra that is used to define the future control $(\rho^*_k)_{k \geq n+1}$ cannot be chosen independently of our current control. This means that we cannot directly consider the Strotz--Pollak approach for the bandit setting as we cannot freeze our future control without freezing our current control.
	
	The notion of C-optimality can be loosely interpreted as a third variation on these time-consistency approaches.  In particular, we can interpret the compensator process  as propagating a value backward in time, as in the robust Bellman approach. Optimality can then be defined forward in time, which relaxes the dependence on
the filtration.
\end{remark}

\subsection{Endowment Effect}
One natural question to ask is whether we can give an interpretation of C-optimality (Definition \ref{C-optimal}) in terms of classical strong optimality (Definition \ref{dynamic strong optimal}). To see this, we will consider an endowment effect through the strong optimality.
\begin{exmp}
	\label{unstability strong optimum}
	Let $H$ and $G$ be random variables representing the cost of two strategies and $\mathcal{Q}$ be a family of probability measures such that $H$ and $G$ are independent under each $\mathbb{Q} \in \mathcal{Q}$. Suppose $\{\mathbb{E}^\mathbb{Q}(H)\}_{\mathbb{Q}\in \mathcal{Q}} = [\underline{h}, \bar{h}]$ and similarly for $G$. Suppose further that $\bar{h} < \bar{g}$ but $\bar{h} - \underline{g} > \bar{g} - \underline{h}$. Then for	$\mathfrak{E}\left(\; \cdot \;\right) := \sup_{\mathbb{Q} \in \mathcal{Q}}\mathbb{E}^\mathbb{Q}\left(\; \cdot \;\right)$, we have
	\begin{equation}
	\label{endowment inequality}
	\mathfrak{E}(H) < \mathfrak{E}(G) \;\;\;\; \text{but} \;\;\;\; \mathfrak{E}\Big(H - \frac{H+G}{2}\Big) > \mathfrak{E}\Big(G - \frac{H+G}{2}\Big).
	\end{equation}
\end{exmp} 
From these inequalities, we see that, without any endowment, we strictly prefer $H$ to $G$ whereas our preference reverses with an endowment $(H+G)/2$. We know that in the classical linear expectation theory (where the classical Gittins theorem holds), an endowment does not affect our preference in the strategy. 

In this section, we will show that C-optimality is nearly equivalent to a strong optimality `up to an endowment' when our nonlinear expectation is time-consistent. 

The following proposition follows from the definition of C-optimality and monotonicity of nonlinear expectation (in particular, Definition \ref{dynamic strong optimal}(iii)-(iv)).
\begin{proposition}
	\label{Prop: C-optimal implies Endowment}
	Let $\rho^*$ be a C-optimal strategy with a predictable compensator $(C^{\rho^*}_N(n))$. Then for every $\rho \sim_N \rho^*$ and $A \in \mathcal{G}^{\rho^*}_N$,
	\begin{equation}
	\label{eq: C imply endow}
	\mathfrak{E}\bigg(\mathbb{I}_A\sum_{n=N+1}^\infty \big(g^{\rho^*}(n) - C^{\rho^*}_N(n) \big) \bigg) \leq \mathfrak{E}\bigg(\mathbb{I}_A \sum_{n=N+1}^\infty \big(g^{\rho}(n)  - C^{\rho^*}_N(n) \big) \bigg).
	\end{equation}
\end{proposition}

Let consider the case when $\mathcal{G}^{\rho}_N = \mathcal{F}_N$ for every strategy $\rho$ and pretend that $\mathfrak{E}$ is an $(\mathcal{F}_n)$-consistent nonlinear expectation operator. Then \eqref{eq: C imply endow} says that C-optimality implies strong optimality, when our agent is given the predictable endowment $-\sum_{n=N+1}^\infty C^{\rho^*}_N(n)$ at time $N$.  We will now show that a converse result also holds, when our operator is consistent.

\begin{definition}
	Let $\mathcal{E}$ be an $(\mathcal{F}_n)$-consistent nonlinear expectation. We say a strategy $\rho^*$ is \textit{optimal up to a predictable endowment} if there exists a family of random variables $(D_N(n))$ such that
	\begin{enumerate}[(i)]
		\item $n \mapsto D_N(n)$ is an $(\mathcal{F}_n)$-predictable process,
	\item $N \mapsto D_N(n)$ is non-increasing, 
	\item For every strategy $\rho \sim_N \rho^*$, for all $A \in \mathcal{G}^\rho_N$,
	\begin{equation}
	\label{eq: compensation_endowment}
	\mathcal{E}\bigg(\mathbb{I}_A\sum_{n=N+1}^\infty \Big(g^{\rho^*}(n) - D_N(n)\Big)\bigg) \leq 	\mathcal{E}\bigg(\mathbb{I}_A\sum_{n=N+1}^\infty \Big(g^{\rho^*}(n) - D_N(n)\Big)\bigg)
	\end{equation}
	or equivalently, 
	\begin{equation*}
	\mathcal{E}\bigg(\sum_{n=N+1}^\infty \Big(g^{\rho^*}(n) - D_N(n)\Big)\bigg|\mathcal{F}_N \bigg) \leq 	\mathcal{E}\bigg(\sum_{n=N+1}^\infty \Big(g^{\rho^*}(n) - D_N(n)\Big) \bigg|\mathcal{F}_N  \bigg)
	\end{equation*}
	\end{enumerate}
\end{definition} 

\begin{proposition}
\label{Prop: Endowment implies C-optimal}
Suppose that $\rho^*$ is an optimal strategy up to a predictable endowment, then $\rho^*$ is C-optimal.
\begin{proof}
	Take $C^{\rho}_N(n) = D_N(n)$ and $V_N = \mathcal{E}\big(\sum_{n=N+1}^\infty \big(g^{\rho^*}(n) - D_N(n)\big)\big|\mathcal{F}_N \big)$.
\end{proof}
\end{proposition}
In the coming section,  we will show that Gittins theorem holds in the sense of guaranteeing C-optimality under an operator $\mathfrak{E}$. This means that we prove that Gittins theorem is a (strong) optimum up to some predictable endowment.
\begin{remark}
	It is an open question under which conditions the C-optimum is unique. In the most trivial case when our operator $\mathfrak{E}$ is simply a classical expectation, the endowment never affects our evaluation; thus it is reduced to the uniqueness of the value function in the classical setting. 
\end{remark}
\section{Overview of Bandits under uncertainty}
\label{Overview section}
Let us recall that the objective of our problem is to dynamically allocate a single resource amongst $M$ bandits to minimize the total discounted cost. We have made a few assumptions to model uncertainty in the cost process which can be founded in Assumptions \ref{model_assumption}, \ref{assumption for cost process} and \ref{assump:existence of nonlinear}.

We also introduce a Mandelbaum allocation strategy (Definition \ref{Mandelbaum definition}) and the equivalent notion $\rho$ (Remark \ref{Mandelbaum and simple seq}) representing the choice of our control. We are now ready to establish a robust Gittins theorem with optimality in the sense of Definition \ref{C-optimal}. Our robust Gittins theorem generalize the result of El Karoui and Karatzas \cite{Karatzas_El_Karoui_discrete} to the uncertain case. One may also see this result as providing a sense of optimality for the index strategy considered by Caro and Gupta \cite{finite_robust_gittins} and Li \cite{Bandit_multiple_prior}.

\subsection{Robust Gittins theorem}
We will first give an alternative definition to the robust Gittins' index inspired by Weber \cite{Weber_proof}, which is more convenient to use in our analysis.
\begin{definition}
	\label{Gittins index}
	For each $s \geq 0$, we define the 
	\emph{robust Gittins index} of the $m$th bandit by
	\begin{align}
	\label{eq: Weber def for Gittins}
	\gamma^{(m)}(s) := \essinf\bigg\{\gamma:\essinf_{\tau \in 
		\mathcal{T}^{(m)}(s)} \mathcal{E}^{(m)}\bigg( \sum_{t=1}^{\tau} 
	\beta^{t} 
	\big(h^{(m)}(s+t) - \gamma\big) \; \bigg| \; \mathcal{F}^{(m)}_s 
	\bigg) \leq 
	0 
	\bigg\}
	\end{align}
	where $\mathcal{T}^{(m)}(s)$ is the space of positive 
	$(\mathcal{F}^{(m)}_{s+t})_{t \geq 0}$-stopping times\footnote{Equivalently, for $\tau \in \mathcal{T}(s)$, $s+\tau$ is an $(\mathcal{F}_{t})_{t \geq 0}$-stopping time.} and the outer essential infimum is taken in $L^\infty(\mathcal{F}_s^{(m)})$. 
\end{definition}
By using the results proved in the later sections, we can write the robust Gittins index explicitly. We present this result here for clarity, but make no use of it in subsequent arguments.
\begin{theorem}
	\label{Thm:Explicit_index}
	Let $\gamma(s)$ be the robust Gittins index (Definition \ref{Gittins index}) (with superscript $(m)$ omitted). Then
$$\gamma(s) = \essinf_{\tau \in \mathcal{T}(s)} \; \esssup_{\mathbb{Q} \in 
\mathcal{Q}} \frac{\mathbb{E}^{\mathbb{Q}}\big( \sum_{t=1}^{\tau} \beta^{t} 
h(s+t) \big| \mathcal{F}_s\big)}{\mathbb{E}^{\mathbb{Q}}\big( \sum_{t=1}^{\tau} 
\beta^{t} \big| \mathcal{F}_s\big)} $$
where $\mathcal{Q}$ is the family of probability measures defined in Theorem \ref{Thm:dynamic robust rep}.
\begin{proof}
	See Theorem \ref{Thm:Explicit_index: proof} in the appendix.
\end{proof}
\end{theorem}
Recall that $\mathfrak{E}$ is the partially consistent orthant nonlinear expectation induced by the family $\left(\mathcal{E}^{(m)}\right)_{m \in \mathcal{M}}$ as given in Definition \ref{orthant expectation}. We can obtain an optimal allocation strategy by considering the following theorem.
\begin{theorem}[Robust Gittins theorem]
	\label{robust Gittins}
	Suppose that for each $m \in \mathcal{M}$, $(\mathcal{F}^{(m)}_t)_{t \geq 0}$ is generated by some underlying process $(\xi^{(m)}_t)_{t \geq 1}$. Let $\psi^{(m)}_n$ be the total number of trials of the $m$th bandit before the $n$th play of the system. i.e.  $\psi^{(m)}_n := 
	\sum_{k=0}^{n-1}\mathbb{I}(\rho^*_k = m)$ (given an allocation strategy $\rho^*$ up to time $n-1$). 
	
	Then the allocation strategy 
	$\rho^*$ given (recursively) by $$\rho^*_n := \min \Big\{m \in \mathcal{M} \;\; : \;\; m \in \argmin_{k} 
	\gamma^{(k)}(\psi^{(k)}_n) \Big\}$$
	is C-optimal (Definition \ref{C-optimal}) under $\mathfrak{E}$ for the cost $$g^\rho(n) = \beta^n h^{(\rho_{n-1})}(t^{\rho}_n) \quad \text{where} \quad t^{\rho}_n = \sum_{k=0}^{n-1}\mathbb{I}(\rho_k = \rho_{n-1}).$$	
\end{theorem}
\begin{remark}
	\label{rem: symmetry breaking}
	We choose $\rho^*$ to be the minimum value in the (random) set of minimum Gittins index machines $\{\argmin_{k} 
	\gamma^{(k)}(\psi^{(k)}_n)\}$ as a simple method of symmetry breaking, in order to avoid complexities due to measurable selection. In fact, any choice of $\rho^*_n \in \{\argmin_{k} 
	\gamma^{(k)}(\psi^{(k)}_n)\}$ also yields C-optimality.
\end{remark}
\begin{remark}
	\label{time-consistency remark}
	The robust Gittins theorem states that an optimal choice is given by always playing a bandit with the lowest robust Gittins index. At each time, the indices of unplayed bandits do not change. This leads to a form of consistency in the values associated with different bandits, even though $\mathfrak{E}$ is not consistent.
\end{remark}
\subsection{Sketch of the Proof}
\label{sketch proof}
We will separate the proof into two parts: In Part A, we analyze a one-armed bandit in a robust setting. In Part B, we combine $M$ bandits together. The main body of the rigorous proof can be found in Appendices A and B (respectively) as self-explained sections. We summarize the structure and approach of the proof here.
\subsubsection{One-armed bandit optimality}
We begin by considering play of the $m$th machine (with the superscript $(m)$ omitted).
\paragraph{Step A.1} Observe that the robust Gittins index is the \emph{minimum} compensation for which we are willing to continue to play the bandit (with compensation). 

By minimality, the net expected cost under optimal play must be zero (Theorem \ref{no benefit lemma}), i.e.
$$\esssup_{\tau \in \mathcal{T}(s)} \mathcal{E}\bigg( \sum_{t=1}^{\tau} \beta^{t} 
\left(h(s+t) - \gamma(s)\right) \; \bigg| \; \mathcal{F}_s\bigg) = 0.$$

In particular,   for any subsequent stopping time $\tau \in \mathcal{T}(s)$, we have
\begin{equation}
\label{nonnegative stop}
\mathcal{E}\bigg( \sum_{t=1}^{\tau} \beta^{t} 
\left(h(s+t) - \gamma(s)\right) \; \bigg| \; \mathcal{F}_s\bigg) \geq 0.
\end{equation}
\paragraph{Step A.2} We view the process $\gamma$ as the `average' cost of playing the bandit. Once the process $(\gamma(t))_{t \geq s}$ exceeds $\gamma(s)$, the reward $\gamma(s)$ will no longer be sufficient to encourage continued play; so it will be optimal to stop.  In particular, the stopping time
\begin{equation}
\label{A2 key}
\sigma(s, \gamma(s)) := \inf\{\theta \geq 1 : 
\gamma(s+\theta) > \gamma(s) \}
\end{equation}
 yields equality in \eqref{nonnegative stop} (Theorem \ref{optimal stopping}). 
\paragraph{Step A.3}
Imagine that, whenever the bandit (with compensating reward) is no longer attractive to play, we were to increase the compensation sufficiently to make ourselves indifferent to continuing. The expected value of future loss, with this increased compensation, must again be zero (Proposition \ref{fair game}). The offered compensation can be written as a running maximum of the robust Gittins index process and we can express the expected return
\begin{align}
\label{no total cost running max}
\mathcal{E}\bigg(\sum_{t=1}^{\infty} \beta^{t} 
\big(h(t) - \Gamma(t)\big) \bigg) = 0 \qquad \text{where } \;\; \Gamma(t):= \max_{0 \leq 
	\theta \leq t-1} \gamma(\theta).
\end{align}
With the compensation reward $\left(\Gamma(t)\right)$, we are always willing to continue to play. In particular, at any point in time, we have a non-positive expected future cost (Theorem \ref{expected recovering reward}), i.e.
\begin{align}
\label{expected future reward}
\mathcal{E}\bigg( \sum_{t=N + 1}^{\infty} \beta^{t} 
\left(h(t) - \Gamma(t)\right) \bigg| \mathcal{F}_N \bigg) \leq 0 \;\;\; \text{for all} \;\;\; N = 0,1,...
\end{align}
\paragraph{Step A.4} 
Now suppose we were to take a break from playing for some period, and then resume our earlier strategy. In this case, we may lose some expected profit (Equation \eqref{expected future reward}) due to the discount effect of the delay. .

By \eqref{no total cost running max}, the total reward of this game is zero. Therefore, the delay of getting the reward must result in a possibly worse outcome. In Theorem \ref{delay and prevailing}, we use this observation, together with the robust representation (Assumption \ref{assump:existence of nonlinear}) to show that for any fixed 
$\epsilon > 0$ there is a probability measure $\mathbb{Q} \in \mathcal{Q}$ such that, for 
every decreasing predictable process $(\alpha(t))$ taking values in $[0,1]$,
\begin{equation}
\label{A.4 key}
\mathbb{E}^{\mathbb{Q}}\bigg( \sum_{t=1}^{\infty} \alpha(t) \beta^{t} 
\big(h(t) - \Gamma(t)\big)\bigg) \geq 
-\epsilon.
\end{equation}
\begin{remark}
	Step A.4 is the key point in which positive homogeneity of $\mathcal{E}$ is used.
	A predictable process $(\alpha(t))$ 
	represents the delay due to taking a break to play another bandit. 
	In step A.3, we choose the compensator such that  the total expected return 
	is zero but the bandit is always attractive to be played. (i.e. we always 
	have a reward for the future.) We therefore cannot expect a better 
	outcome 
	than zero if we delay our play. Mathematically, one can replace positive homogeneity and subadditivity by convexity and the property that: if $\mathcal{E}(X|\mathcal{F}_t) \leq 0$, then for all $\mathcal{F}_t$-measurable random variables $\alpha$ taking values in $[0,1]$, we have $\mathcal{E}(\alpha X|\mathcal{F}_t) \geq \mathcal{E}(X|\mathcal{F}_t)$.
\end{remark}

\subsubsection{Information structures for Multi-armed bandits}
We now consider combining play over multiple machines. 

To retain consistency for a single bandit, the nonlinear expectation needs to be defined together with the filtration.  It follows that we need to define an `independent' nonlinear expectation on the joint space of the bandits, which we do via an orthogonal product space. This restriction does not allow us to directly implement Mandelbaum's \cite{Mandelbaum_discrete_allocation} original approach for a dynamic allocation strategy (Definition \ref{Mandelbaum definition}). This is because the multi-parameter process $(\tilde{\eta}(n))$ is only defined to be measurable with respect to the orthant filtration. In particular, it is not clear how one could directly extract the component of $(\tilde{\eta}(n))$ to the marginal space $\Omega^{(m)}$ where our single-bandit nonlinear expectation is defined. 

The importance of decomposing a strategy on the multi-armed bandit to strategies for one-armed bandits can be seen in the proof of El Karoui and Karatzas \cite[Equation 5.1]{Karatzas_El_Karoui_discrete} (via  Whittle's approach \cite{Whittle_proof}), and is described more explicitly in their continuous time paper \cite[Equation 6.9]{cts_Gittin}.

In order to overcome this difficulty, we introduce a class of allocation strategies where there is a component associated to the stopping times of the marginal filtrations. This component allows us to connect and separate the space of multiple bandits to the marginal space of each single bandit. 

Our class of allocation strategies consists of two components $(\tau, p)$. The collection of random times
$\tau = (\tau^{(m)}_k)_{k \geq 0, m\in \{1,...,M\}}$ will identify the 
duration for which will play the 
$m$th bandit, the $k$th time we start to play. This sequence is chosen based 
on historical observations of the $m$th bandit only, that is, the random times 
$\sum_{k=0}^{K}\tau^{(m)}_k$ are $(\mathcal{F}^{(m)}_t)_{t \geq 
	0}$-stopping times for all $K \geq 0$. Once we play a bandit for $\tau^{(m)}_k$ 
trials, we will then reconsider which bandit to play. Our choice of new bandit (which may be the same as before) will be described by the sequence $(p_n)$ taking values in $\{1,..., M\}$, and may depend on information from all bandits. The allocation strategy can be defined formally as follows:

\begin{definition}
	\label{time allocation sequences}
	We say $\tau := \big(\tau^{(m)}_k\big)_{k \geq 0, m \in \mathcal{M}}$ 
	is a \emph{family of time allocation sequences} if 
	\begin{enumerate}[(i)]
		\item For each $m$, $(\tau^{(m)}_k)_{k \geq 0}$  is a sequence of 
		non-negative random times defined on the space $(\Omega^{(m)}, 
		\mathcal{F}^{(m)}_{\infty})$. 
		\item $\sum_{i=0}^{k}\tau^{(m)}_{i}$ is an 
		$(\mathcal{F}^{(m)}_{t})_{t \geq 0}$-stopping time for all 
		$k \geq 0$.
	\end{enumerate}
\end{definition}
Intuitively, the random sequence $(p_n)$ is allowed to depend on all prior 
observations from all bandits. For the sake of precise bookkeeping we need to record, at each moment, how many times we have already played each bandit. This leads to the following definition.
\begin{definition}
	\label{recording sequences}
	Given a family of time allocation sequences $\tau$, we say a sequence of 
	random variables $(\eta_n)_{n\geq0}$ taking values in $\mathcal{S}= \mathbb{N}_0^M$ is a 
	\emph{recording sequence} associated to $\tau$, with corresponding \emph{choice sequence} $(p_n)_{n\in \mathbb{N}_0}$ taking values in $\mathcal{M}$, if 
	\begin{enumerate}[(i)]
		\item $\eta_0 = (0,...,0).$
		\item $\eta_{n+1} = \eta_n + e^{(p_n)}$.
	\end{enumerate}
	
	The choice process $p_n$ satisfies 
	\begin{enumerate}[(i)]
		\setcounter{enumi}{2}
		\item  for all $k \in \mathcal{M}$ and $r\in \mathcal{S}$, 
		\[\{p_{n} = k\} \cap \{\eta_n = r\} \in \mathcal{F}(\Psi_r)= \bigotimes_{m=1}^M 
		\mathcal{F}^{(m)}_{\Psi_r^{(m)} }\] where $\Psi_r^{(m)} := \sum_{i=0}^{r^{(m)}-1} \tau^{(m)}_i$. In particular, $\{\eta_n = r\} \in \mathcal{F}(\Psi_r)$.
	\end{enumerate}		
\end{definition}
For a given time allocation sequence $\tau$, the recording sequence $\eta_n$ determines the \emph{decision filtration}, given by
\begin{equation}
\label{decision filtration}
\mathcal{G}^{(\tau,p)}_n := \Big\{A \in \mathcal{F}(T) \; : \; A \cap \{\eta_{n} = 
r\} \in \mathcal{F}(\Psi_r) \;\;\; \forall r \in \mathcal{S} \Big\} 
\end{equation}
where $\Psi_r^{(m)} = \sum_{i=0}^{r^{(m)}-1} \tau^{(m)}_i$.

\begin{remark}		
	We can see in Definition \ref{recording sequences}(iii) that $(p_n)$ is 
	adapted to the filtration $(\mathcal{G}^{(\tau,p)}_n)_{n \geq 0}$, i.e. we have made our decision what to do next based on our previous observations.
\end{remark}

\begin{definition}
	\label{admissible control}
	An \emph{(admissible) allocation strategy} $(\tau, p)$  
	consists of a family of time allocation sequences $\tau$ and a $(\mathcal{G}^{(\tau,p)}_n)_{n \geq 0}$-adapted choice sequence $p$ (defined under $\tau$).
\end{definition}
\begin{exmp}
	\label{example for allocation strategy}
	Suppose there are two bandits. The first bandit gives 
	only 2 outcomes: $\{w,l\}$. Consider the strategy of playing the first 
	bandit until we see the first $l$. Then we swap to the second bandit for two trials and swap back to the first bandit and repeat the same procedure.
	
	In this case, we define $(X_t)_{t \geq 1}$ to be the outcome of 
	the first bandit and define $\theta_{k+1} := \inf\{t \geq 1 \; : \; X_{t+\sum_{i=0}^k\theta_i} = l 
	\}$. We then have the representation of this strategy
	\begin{align*}
	\tau^{(1)}  &= (\theta_0, \theta_1, ....),  \quad
	\tau^{(2)}  = (2,2,2,...),\quad \text{and} \quad p = (1,2,1,2,...).
	\end{align*}
	The corresponding recording sequence is 
	$$\eta =  
	\big((0,0), (1,0), (1,1), (2,1), (2,2), (3,2), ...\big) .$$
	
 The same strategy can be represented in multiple ways. Here, for example, we can also write
	\begin{align*}
	\tau^{(1)}  &= (\theta_0, \theta_1, ....),  \quad
	\tau^{(2)}  = (1,1,1,...),\quad \text{and} \quad p = (1, 2, 2, 1, 2, 2, 1, ...).
	\end{align*}
	The corresponding recording sequence becomes
	$$\eta =  
	\big((0,0), (1,0), (1,1), (1,2), (2,2), (2,3), (2,4), (3,4), (3,5), (3,6), (4,7),...\big) .$$
	
	As discussed in Remark \ref{Mandelbaum and simple seq}, we can express a Mandelbaum allocation strategy (Definition \ref{Mandelbaum definition}) in terms of a sequence $\rho$ of decisions made at each time. For the strategy described above, this gives the unique sequence 
	$$\rho = \big(\underbrace{1, 1, .., 1}_{\theta_0}, 2, 2,
	\; 
	\underbrace{1, 1, .., 1}_{\theta_1}, 2, 2, \underbrace{1, 1, .., 1,}_{\theta_2} ...\big).$$
\end{exmp}
Extending this example, we can generally write our strategy $(\tau, p)$ in terms of $\rho$ and vice versa. This unique representation provides a simple (if inefficient) description of our strategy, which we now make precise.
\begin{definition}
	\label{simple form}
	Define the random variable $\rho_n$ to be the bandit which will be observed in the $n$th play under an admissible allocation strategy $(\tau, p)$. We call the process  $(\rho_n)_{n \geq 0}$, a \textit{simple form} allocation sequence. The construction of the sequence $(\rho_n)$ is given explicitly in Lemma \ref{construction simple form} in the appendix.
	
	For admissible allocation strategies $(\tau, p)$ and $(\hat{\tau}, \hat{p})$, we write $(\tau, p) \sim (\hat{\tau}, \hat{p})$ if they lead to the same simple form. (Clearly, $\sim$ defines equivalence classes.)
\end{definition}

\begin{remark}
	\label{simple form to Mandelbaum}
	Observe that if $\rho$ is the simple form of $(\tau,p)$ and we denote the time allocation sequence $\mathfrak{1} = \big(1, 1, 1, ...\big)$,	then $(\mathfrak{1}, \rho)$ is an allocation strategy which yields the same decisions as $(\tau, p)$. In particular, we have $(\mathfrak{1}, \rho) \sim (\tau, p)$. 
	
	Furthermore, one can check that the recording sequence corresponding to $(\mathfrak{1}, \rho)$ is exactly the Mandelbaum allocation strategy (Definition \ref{Mandelbaum definition}). In particular, we can explicitly construct a one-to-one correspondence between our equivalence class of admissible strategies (Definition \ref{admissible control}) and Mandelbaum allocation strategies, and we have $\mathcal{G}^{(\mathfrak{1},\rho)}_n = \mathcal{G}^{\rho}_n$ in \eqref{introduce decision filtration}.
\end{remark}
\begin{remark}
	\label{observed vs decision filtration}
	Assume that, for $m \in \mathcal{M}$, the filtration 
	$(\mathcal{F}^{(m)}_t)$  is generated by an underlying real process 
	$(\xi^{(m)}_t)_{t \geq 1}$ defined on the space $(\Omega^{(m)}, \mathcal{F}_\infty^{(m)})$. i.e. $$\mathcal{F}^{(m)}_0 = \big\{\phi, \Omega^{(m)} \big\} \quad \text{and} \quad \mathcal{F}^{(m)}_t = \sigma\big(\xi^{(m)}_1, \xi^{(m)}_2, ..., \xi^{(m)}_t\big). $$ 
	
	If we parameterize our actions by a simple form strategy $(\mathfrak{1},\rho)$ with associated recording sequence $\eta$, then $\rho_{n-1}$ is the decision made at time $n-1$ to generate the outcome observed at time $n$. The observation at the $n$th play is given by
	$$\xi^\rho_n := \xi^{(\rho_{n-1})}_{\eta^{(\rho_{n-1})}_{n}} = \sum_{m=1}^M \sum_{t=1}^{\infty}\xi^{(m)}_t \mathbb{I}(\rho_{n-1} = m,  \; \eta^{(m)}_n = t).$$
	We define the \textit{observed filtration} by $\mathcal{H}^{\rho}_0 = \{\phi, \bar{\Omega} \}$ and $
	\mathcal{H}^{\rho}_n := \sigma\left(\xi^\rho_1, ..., \xi^\rho_n\right) $. 
	We prove, in the appendix, that the observed filtration agrees with that used in Definition \ref{recording sequences} when considering measurability of $\rho$. That is
	\begin{equation}
	\label{decision filtration generated by rho}
	\mathcal{H}^{\rho}_n = \mathcal{G}^{(\mathfrak{1},\rho)}_n =\left\{A \in \mathcal{F}(T) \;\; : \;\; A \cap \{\eta_{n} = r\} \in \mathcal{F}(r) \;\right\}
	\end{equation}
	where $\eta$ is the recording sequence corresponding to $(\mathfrak{1}, \rho)$.
\end{remark}

\subsubsection{Multi-armed bandit optimality}
We can now give the second half of the proof for the robust Gittins index 
theorem where we will consider $0$ as our referencing value function.

In order to prove the optimality of the robust Gittins' strategy, we define the target function for an allocation strategy by 
\begin{equation}
\label{target for C-optimality}
V(\tau,p) := 
\mathfrak{E}\bigg(\sum_{n=1}^{\infty} 
\beta^{n} \left(h^{(\rho_{n-1})}(t^{\rho}_n) - 
\Gamma^{(\rho_{n-1})}(t^{\rho}_n)\right) 
\bigg) \; : \;\; t^{\rho}_n := \sum_{k=0}^{n-1} \mathbb{I}(\rho_k = \rho_{n-1})
\end{equation}
where $\rho$ is a simple form derived from $(\tau,p)$ and $\big(\Gamma^{(m)}(t)\big)$ is the running max of the robust Gittins index of the $m$th bandit, as considered in \eqref{no total cost running max}.

\paragraph{Step B.1} 
 Suppose that we have $M$ bandits, with associated indifference rewards $(\Gamma^{(m)})_{m \in \mathcal{M}}$ as in 
step A.3. If we mix the play of these bandits, this is equivalent to 
taking a break in a single bandit to play the others. This delay will result in a possibly worse outcome (Equation \eqref{A.4 key} in step A.4). 

In Theorem \ref{positivity}, we use the definition of $\mathfrak{E}$ and apply Fubini's theorem to show that, for all allocation strategies $(\tau,p)$, this implies that for any $\epsilon > 0$, there exists a probability measure $\bigotimes_{m=1}^M \mathbb{Q}^{(m)} \in \mathcal{Q}$ such that
\begin{align*}
	V(\tau,p) & \geq
	\mathbb{E}^{\bigotimes_{m=1}^M \mathbb{Q}^{(m)}}\bigg( 
	\sum_{n=1}^{\infty} 
	\beta^{n} \left(h^{({\rho}_{n-1})}(t^{\rho}_n) - 
	\Gamma^{({\rho}_{n-1})}(t^{\rho}_n)\right)\bigg)\\
	& =  \sum_{m=1}^{M} \mathbb{E}^{\mathbb{Q}^{(m)}}\bigg(
	\sum_{t=1}^{\infty} 
	\tilde{\alpha}^{(m)}(t) \beta^t\left(h^{(m)}(t) - 
	\Gamma^{(m)}(t)\right)\bigg) \;\;\; \geq \;\;\; -M\epsilon
\end{align*}
where $\tilde{\alpha}^{(m)}(t)$ is the delay effect on the $m$th bandit due to playing other bandits.

As $\epsilon$ is arbitrary, it follows that for all 
allocation strategies $(\tau,p)$,
\begin{equation}
\label{suboptimal value}
V(\tau,p) \geq 0.
\end{equation}

\paragraph{Step B.2} In step A.2, we noticed that the total expected loss of a single 
bandit between $S$ and $S'$ is zero, for $S$ and $S'$ the consecutive stopping times when 
the robust Gittins index hits a new maximum (Equation \eqref{A2 key}). We use this fact to construct a family of time allocation sequences as a candidate optimal strategy.

Define (inductively) $S^{(m)}_{k} := \sum_{l=0}^{k-1} 
\sigma^{(m)}_{l}$ and
\begin{equation}
\label{sigma allocation}
\sigma^{(m)}_{k} := 
\inf\left\{\theta \geq 1 : \gamma^{(m)}(S^{(m)}_{k} + \theta) >  \gamma^{(m)}(S^{(m)}_{k}) \right\}  
\end{equation}
Using our construction on the class of allocation strategies, we can project the joint nonlinear valuation to its marginal space which is equipped with a consistent nonlinear expectation. We can then use the result from step A.2, that  $\sigma^{(m)}_{k}$ yields equality in \eqref{nonnegative stop}, to show that, for any $p$, with the choice of time allocation sequences $\sigma = (\sigma^{(m)}_k)$, the allocation strategy $(\sigma,p)$ has value
$$V(\sigma,p) \leq 0.$$
This result is shown in Theorem \ref{negativity}.
\paragraph{Step B.3} By combining Step B.1 and Step B.2, for any $p$, with the choice of time allocation sequences $\sigma = (\sigma^{(m)}_k)$ considered above, we have 
\begin{equation}
\label{no cost}
V(\sigma,p) = 0.
\end{equation}

We consider $C^\rho(n) = \beta^n\Gamma^{(\rho_{n-1})}(t^{\rho}_n)$ as a (sub-)compensator in Definition \ref{C-optimal}. The strategy $\rho^*$ given in Theorem \ref{robust Gittins}  is the strategy of always playing the bandit with the minimal index. Therefore, it lies in the same equivalence class as a strategy with the time allocation sequences $(\sigma^{(m)})$ (and with $p$ indicating the minimum index amongst all bandits at each time). Hence, by \eqref{no cost},
$$\mathfrak{E}\left(\sum_{n=1}^{\infty} 
\beta^{n} \left(h^{(\rho^*_{n-1})}(t^{\rho^*}_n) - 
\Gamma^{(\rho^*_{n-1})}(t^{\rho^*}_n)\right) 
\right) = 0.$$

Furthermore, by \eqref{suboptimal value}, 
$$\mathfrak{E}\left(\sum_{n=1}^{\infty} 
\beta^{n} \left(h^{(\rho_{n-1})}(t^{\rho}_n) - 
\Gamma^{(\rho_{n-1})}(t^{\rho}_n)\right) 
\right) \geq 0.$$

  By monotonicity of the process $\Gamma$, we prefer lower value earlier, due to the discount effect.  Thus,  we prove the optimality condition when $N = 0$.  We can now restart our analysis at the considered (orthant) time to obtain the optimal condition for $N > 0$.  We now thus show that $\rho^*$ satisfies the condition for C-optimal. The formal proof of this result can be found in  Theorem \ref{proof: robust Gittins}.

\section{Numerical Results}\label{sec:numerics}

In this section, we study the behaviour of the robust Gittins index using a numerical example. Again we omit the superscript $(m)$ for notational simplicity. 

We suppose the bandit under consideration generates independent identically distributed costs $(h(t))_{1 \leq t \leq T}$ of either \$1 or \$0, given (unknown) 
probability $\mathbb{P}(h(t) = 1) = \theta$ and $h(t) = 2$ for all $t > T$. The filtration $\left(\mathcal{F}_t\right)_{t\ge0}$ is 
generated by the observed cost process $(\xi_t)_{1 \leq t \leq T} = (h(t))_{1 \leq t \leq T}$ (with $\mathcal{F}_0$ trivial). The horizon $T$ can be thought of as the maximum number of times that each bandit can be played.

\begin{remark}
An imaginary horizon $T$ is introduced in order to allow us to easily construct a data-driven recursive nonlinear expectation \eqref{test model} by backward induction

The future cost $h(t) = 2$ is introduced to simplify our numerical method. By considering \eqref{eq: Weber def for Gittins}, we can see that the robust Gittins index $(\gamma(t))_{t \geq 1}$ takes values between $0$ and $1$ when $t \leq T$ and $\gamma(t) = 2$ for $t > T$. Moreover, the optimal stopping time $\sigma(t, \gamma(t)) \leq T-t$. Hence, one can calculate the robust index $\gamma(t)$ by considering a finite horizon optimal stopping problem.
\end{remark}
We model uncertainty in this setting by constructing a one-step coherent nonlinear expectation $\mathcal{E}_{(t)}(\cdot) : L^\infty(\mathcal{F}_{t+1}) \to L^\infty(\mathcal{F}_t)$. Once we have a one-step coherent nonlinear expectation, we can construct an $(\mathcal{F}_t)$-consistent coherent nonlinear expectation by
 $$\mathcal{E}\Big(\; \cdot \; \Big| \mathcal{F}_t \Big) = \mathcal{E}_{(t)}\Big( \mathcal{E}_{(t+1)}\big( \cdots \mathcal{E}_{(T-1)}(\; \cdot \; ) \cdots \big) \Big).$$
 \begin{remark}
We will consider one-step coherent nonlinear expectation which is inspired by the DR-Expectation \cite{DR_original}, see also Bielecki, Chen and Cialenco \cite{Bielecki2017}):
\begin{equation}
\label{test model}
\mathcal{E}_{(t)}\Big(f(\xi_1,...,\xi_t, \xi_{t+1}) \Big) := \sup_{\theta \in \Theta_t} \Big(\theta f(\xi_1,...,\xi_t, 1) + (1-\theta) f(\xi_1,...,\xi_t, 0) \Big)
\end{equation}
 where $\Theta_t = \Big[p^-(p_t, n_t), p^+(p_t, n_t)\Big]$ corresponds to a 
 credible interval for $\theta$ given our observations at time $t$, using a (possibly improper) Beta prior distribution.  The processes $n_t$ and $p_t$ 
 correspond to the number of observations and the (posterior mean) estimate of $\theta$ at time $t$.
 
 In particular, we may choose a credible level $k \in [0,1]$ and obtain 
 $p^\pm(p_t, n_t)$ by
 $$p^\pm(p_t, n_t) = I_{(p_tn_t, (1-p_t)n_t)}^{-1}(0.5 \pm k/2)$$
 where $q \mapsto I_{(a,b)}^{-1}(q)$ is the quantile function of the $\text{Beta}(a,b)$ distribution.

 	One could also use the central limit theorem to obtain an asymptotic confidence interval. 
 	However, due to the fact that $\Theta_t \subseteq [0,1]$, we restrict 
 	ourselves to the credible set above to avoid end-effects, and allow for asymmetry in the plausible values around the `best' estimate.
 \end{remark}

As our credible set is constructed from $p_t$ and $n_t$, and the pair $(p_t, n_t)$ can be computed recursively, it follows that for every 
$f: \{0,1\}^{T-t} \to \mathbb{R}$, there exists a function $g_{k, T-t}: 
\mathbb{R}^2 \to \mathbb{R}$ such that
$\mathcal{E}\big( f(\xi_{t+1}, ..., \xi_T)\big| \mathcal{F}_t\big) = 
g_{k,T-t}\Big(p_t, \frac{1}{\sqrt{n_t}}\Big).$
\footnote{Here, we write the nonlinear expectation as a function of 
$n_t^{-1/2}$ instead of $n_t$ as we wish to approximate our function 
on a compact domain. The choice of $n_t^{-1/2}$ comes from the natural scaling of the credible set.}

By recalling the definition of $\gamma(s)$ (Definition \ref{Gittins index}),  one can show (using a general robust dynamic programming argument, as in Ruszczy\'nski \cite{General_robust_Markov_DPP}, or the nonlinear Snell's envelope, as in Riedel \cite{nonlinear_snell_envelope}) that we can write 
$$\gamma(t) = \gamma_{k, \beta, T-t}\Big(p_t, \frac{1}{\sqrt{n_t}}\Big) \qquad \text{where} \qquad n_t = n_0 + t.$$
for some function $\gamma_{k, \beta, T-t}$. 

We then use a simple finite-difference algorithm (see Appendix \ref{Surface Algorithm}) to estimate the function $$\Big(p, \frac{1}{\sqrt{n}} \Big) \mapsto \gamma_{k,\beta,T-(n-n_0)}\Big(p, \frac{1}{\sqrt{n}}\Big) - p$$
where, in our simulations, we fix $n_0 = 1$.

 Plots of this estimate, for various values of $k$, $\beta$ and $T$, can be found in Figure \ref{fig:surface}.  
\begin{figure}
	\centering	
	\includegraphics[clip, trim=2cm 3cm 1cm 4cm, width=1.00\textwidth]{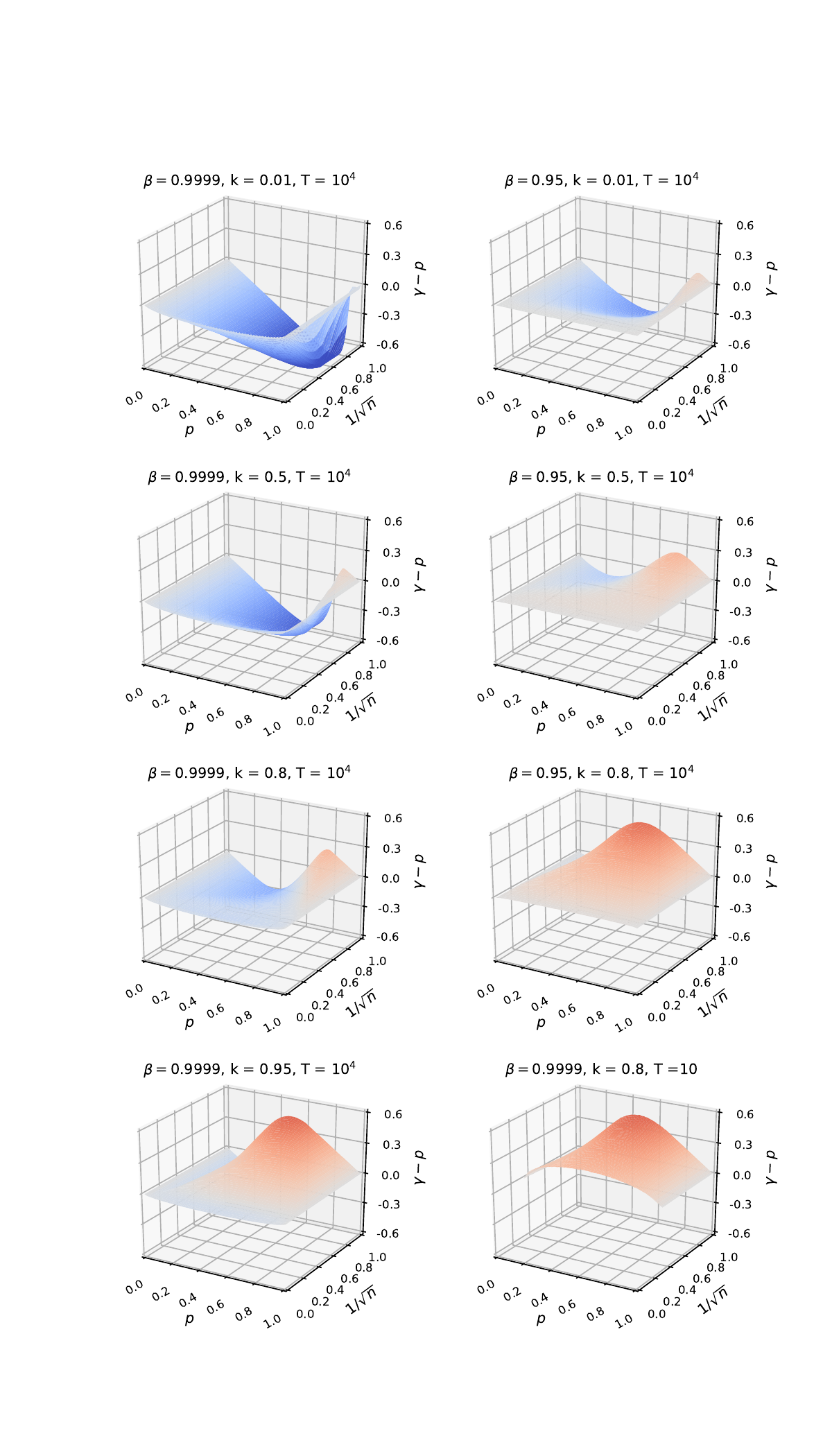}
	\caption{Estimated value of $\gamma - p$ for different values of $k$, $\beta$ and $T$. The case $T=10$ is truncated as $n$ cannot exceed $T$ (by definition)}
	\label{fig:surface}
\end{figure}

For $h(t) = \xi_t$, with uncertainty modeled by \eqref{test model}, at each time step we wish to play the bandit with the lowest $\theta$. Classically, this is estimated by $p$, so a na{\"\i}ve (greedy) strategy would suggest playing the bandit with the lowest estimated average loss $p$. By using C-optimality, at each point, we choose a bandit with the lowest $\gamma$. Therefore, we may think of $\gamma$ as an implied probability $p$, distorted to account for exploration and exploitation of the system of bandits.

In Figure \ref{fig:surface}, we see the following broad phenomena:
\begin{itemize}
 \item When $1/\sqrt{n}$ is small, the difference between $\gamma$ and $p$ is close to zero. In particular, this says that when we have high certainty in our estimates, $\gamma$ is equivalent to the estimated probability.  
 \item When we increase $\beta$, the difference typically $\gamma - p$ decreases. This corresponds to the fact that $\beta$ is a discount factor which determines how much we value future costs. Therefore, increasing $\beta$ increases the degree that we wish to explore the system, i.e. we become more optimistic in our evaluation. We also observe that decreasing $\beta$ also yields a similar result to shortening the horizon.
 \item When $k$ is increased,  the difference $\gamma - p$ increases. This is due to the fact that  $k$  corresponds to the `width' of the `credible interval'. Hence, large $k$ means that we become more conservative and favour exploiting over exploring.
 \end{itemize}
 \subsection{Prospect Theory}
One result suggested in  Figure \ref{fig:surface} when $\beta = 0.9999$ and $k=0.01$ is that, when we do not worry about uncertainty, we are more optimistic when $p$ is large (close to $1$), that is, $\gamma$ is clearly less than $p$. On the other hand, when uncertainty dominates, e.g. when  $\beta = 0.9999$ and $k=0.95$, or $\beta = 0.95$ and $k=0.8$, we become more pessimistic. 

Curiously, when $\beta = 0.9999$ and $k=0.8$, or $\beta = 0.95$ and $k=0.5$, both optimism and pessimism can be seen. For large $p$, (when the game seems bad), pessimism dominates, while for small $p$ (when the game seems good) we become optimistic in our optimal strategy. This gives a bias in the probabilities, related to that used in the probability weighting functions as considered in prospect theory by Kahneman and Tversky \cite{prospect_theory} or in rank-dependent expected utility by Quiggin \cite{rank-dependent_theory_paper, rank-dependent_theory_book}. In this literature, they propose models to explain irrationality in human decisions under risk. They argue that people generally reweigh the probabilities of different outcomes using a nonlinear increasing map $p\mapsto \pi(p)$, with various assumptions on its curvature.

Our result (for appropriate values of $\beta$ and $k$) reflects this behaviour without imposing a probability weighting function as in classical prospect theory. Instead, the combination of the effect of learning and uncertainty leads to distortions of the estimated probability. 
\subsection{Monte-Carlo Simulation}
In order to illustrate the performance of the robust Gittins index calculated above in the real decision making, we consider the Bernoulli bandit as described above over 50 exchangeable bandits and for a horizon $T = 10^4$. We run $10^3$ Monte-Carlo simulations and compare performance of various strategies for decision making. To provide a wide range of scenarios in which our strategies must perform, in each simulation we first generate $a,b$ independently from a $\Gamma(1, 1/100)$ distribution, then generate the `true' probabilities for each bandit independently from $\text{Beta}(a,b)$. We generate $10$ trials on each bandit to provide initial information.

N.B. Formally, we assume that each bandit can be played for at least $T = 10^4$ trials in constructing our Gittins index. We illustrate the performance of the first $10^4$ plays to compare with other algorithms.

\subsubsection{Measures of Regret}
There are a number of possible objectives to measure the loss of our decisions. We will consider the following examples
 (from Bertini et al.\:\cite{adaptive_learning_survey} and Lai and Robbins \cite{Asymp_efficient}).
\begin{itemize}
	\item \textbf{Expected--expected regret.} This is the difference in the true expectations under our strategy and an optimal strategy  with perfect information. In our setting, this can be given by
	$R(L) = \sum_{n=0}^{L}(\theta^{(\rho_{n})} - \theta^{*})$
	where $\theta^{(m)}$ is the true probability of the $m$th bandit and $\theta^{*} = \min_m \theta^{(m)}$.
	\item \textbf{Sub-optimal plays.} This measures the number of times where we play a sub-optimal bandit which is given by
	$N_\vee(L) = \sum_{n=0}^{L} \mathbb{I}(\theta^{(\rho_{n})} \neq \theta^{*}).$
\end{itemize}
\subsubsection{Policy for multi-armed-bandits}
In our simulation, we will label our algorithm the DR (Data-Robust) algorithm. We also consider the following classical policies which are commonly used to solve the Bernoulli bandit problem. These policies choose an arm by considering the minimal index $I$ evaluated on each bandit separately. Literature about these policies and further developments can be found in the reviews by Bertini et al. \cite{adaptive_learning_survey} or Russo et al. \cite{Tutorial_on_Thompson_Sampling}. For notational simplicity, we will denote by $p$ and $n$ the estimated probability and the number of observations of the considered bandit at the time before making a decision. 
\begin{itemize}
	\item \textbf{Greedy strategy.} In this policy, we choose the bandit with the minimal estimated probability given by $I^{Greedy} = p.$
	\item \textbf{Thompson strategy.} This is a Bayesian adaptive decision strategy for the bandit problem. It proceeds by first randomly generating a sample from the posterior distribution of the mean cost of each bandit, then chooses to play the bandit which gave the minimal sample. In our setting, these samples are given by
		$I^{Thompson} \sim \text{Beta}(a_0 + pn, b_0 + (1-p)n)$
	where $a_0,b_0 > 0$ are the parameters of a Beta prior distribution for the mean. To avoid biasing our estimation, we consider initial values $a_0=b_0\in \{0.0001, 1, 50\}$, where larger values correspond to a more informative prior.
	\item \textbf{UCB strategy.} This is an optimistic strategy to choose the bandit based on its lower bound.
	$I^{UCB} = p - \sqrt{\frac{\lambda \log{N}}{n}}$
	where $\lambda > 0$ is a chosen parameter, which is commonly chosen to be $2$ and $N := \sum_{m=1}^{M}n^{(m)}$ is the total number of observations across all bandits.
\end{itemize}
\begin{remark}
	To avoid bias in the algorithms, we choose a bandit uniformly at random if there is more than one bandit with minimal index.
\end{remark}
\begin{figure}[H]
	\centering
	\includegraphics[clip, trim=3.2cm 1cm 3.3cm 1.5cm, width=0.9\linewidth]{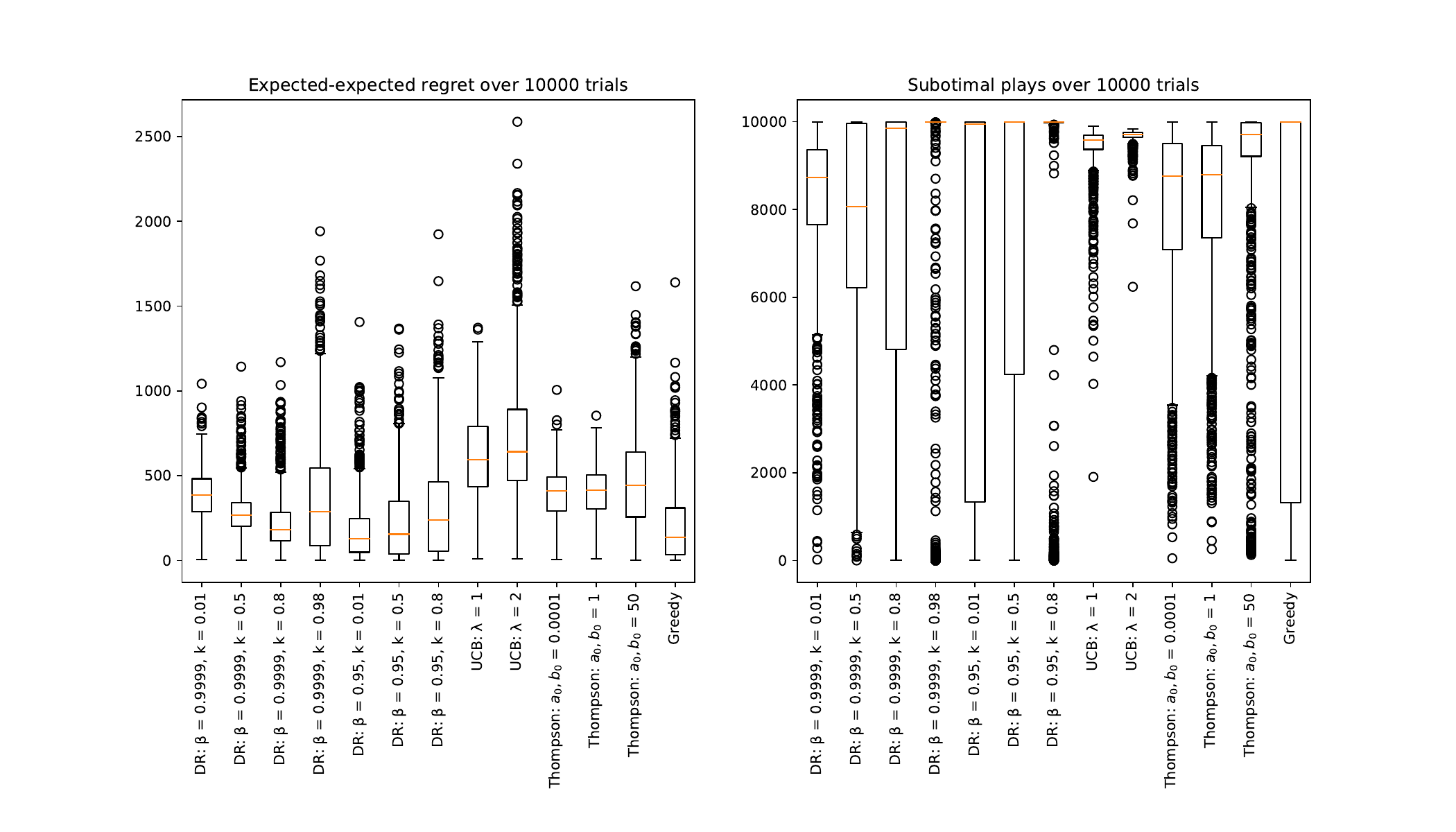}
	\caption{Regret under different policies}
	\label{fig:regretplot}
\end{figure}

In Figure \ref{fig:regretplot}, considering first the cases where $\beta = 0.9999$, we can see that an increase in the value of $k$ has a nonlinear effect on the distribution of regret. Initially, increasing $k$ appears to lead to a reduction in the typical regret, but a possible increase in the average and variability of the number of suboptimal plays. However, setting $k$ too large clearly leads to worse outcomes. This is because $k$ corresponds to the level of robustness; the more robust we are, the less willing we are to explore and the more willing we are to exploit. It follows that a large value of $k$ encourages us to exploit early, and we may not find the optimal bandit to play.

On the other hand, the discount rate $\beta$ determines how much we value our future costs. If we have a high level of robustness (large $k$) but do not value the future cost enough (small $\beta$), we may end up settling for a sub-optimal decision. This can be seen most clearly when $\beta = 0.9999$ and $k = 0.8$. In this case the average expected-expected regret is relatively small when compared to other strategies, but its average number of suboptimal plays is relatively high. Reducing $\beta$ to $0.95$ emphasizes these effects even further.

As discussed in the introduction, the UCB algorithm asymptotically achieves a minimal regret bound (see \cite{Asymp_efficient}). It does so by ensuring that, over short horizons, the algorithm explores a sufficient amount, in order to guarantee good asymptotic performance. We can see that in our simulation (with $10^4$ plays over $50$ bandits), the UCB algorithm is still in its high exploration regime which results in a high regret and very few optimal plays. In contrast, a Greedy algorithm always chooses an arm to play without taking into account its uncertainty (and so without considering the possibility for exploration) and therefore there is no learning in its procedure. This results in the greedy algorithm yielding a low average regret but a high average number of suboptimal plays.
\subsubsection{Robustness of the DR Algorithms}
\begin{figure}[H]
	\centering
	\includegraphics[clip, trim=1cm 0cm 1cm 0cm, width=1\linewidth]{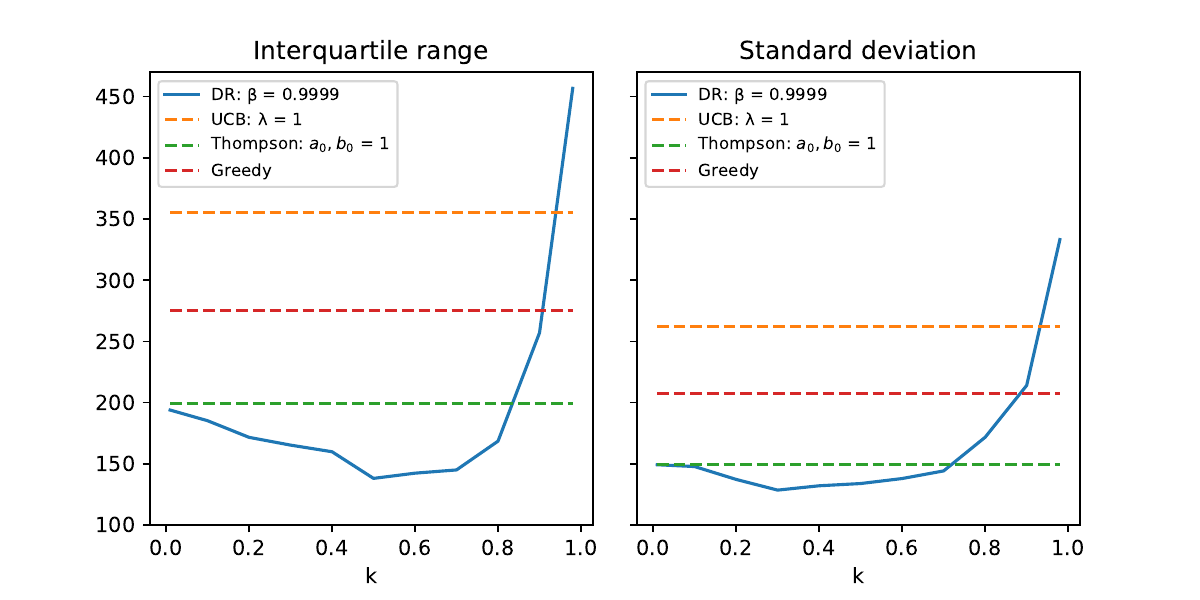}
	\caption{Deviation of the expected-expected regret when $\beta = 0.9999$}
	\label{fig:errorregretplot}
\end{figure}
In Figure \ref{fig:errorregretplot}, we illustrate the interquartile range and the standard deviation of the total expected-expected regret when $\beta = 0.9999$ with different values of $k$ over $1000$ simulations. We can see that by introducing an appropriate values of $k$, we can obtain a substantial reduction in the interquartile range and the standard deviation. In particular, the DR algorithm does not only give a low average regret but also does so consistently over different simulations.

%
%

\textbf{Acknowledgements:} Samuel Cohen thanks the Oxford-Man Institute for research support and acknowledges the support of The Alan Turing Institute under the Engineering and Physical Sciences Research Council grant EP/N510129/1. Tanut Treetanthiploet acknowledges support of the Development and Promotion of Science and Technology Talents Project (DPST) of the Government of Thailand.

\addcontentsline{toc}{chapter}{Bibliography}
\bibliography{bibliography} 
\bibliographystyle{plain}
\appendix
\section{Part A: Analysis of a single bandit}\label{sec:partAproof}
We will now flesh out the sketch given in Section \ref{sketch proof}.

In this section, we will focus the discussion on a single bandit.
\subsection{Step A.1: Indifference reward and Optimal Stopping problem}
We first recall the definition of the robust Gittins index (process).
\begin{align*}{\gamma(s) := \essinf\bigg\{\gamma \in L^\infty(\mathcal{F}_s) : 
\essinf_{\tau \in 
		\mathcal{T}(s)} \mathcal{E}\bigg( \sum_{t=s+1}^{s+\tau} 
	\beta^{t} 
	\big(h(t) - \gamma\big) \; \bigg| \; \mathcal{F}_s 
	\bigg) \leq 
	0 
	\bigg\}
}\end{align*}
where $\mathcal{T}(s)$ denotes the family of $\left(\mathcal{F}_{s+t}\right)_{t 
\geq 0}$-positive stopping times. 
\begin{remark}
	If we take $\tau =  1$, we observe by boundedness of $h$ (Assumption 
	\ref{assumption for cost process}) that $\gamma(t) < C$.
\end{remark}
To study the process $\gamma$, we introduce an auxiliary optimal stopping 
problem. At each time step, the player decides whether to continue or to stop 
play of the machine. If the player decides to continue to play, he will be 
offered a fixed reward $\lambda$ (known at the initial time $s$) in addition to 
the cost $h(t)$. 
\begin{definition}
	The target function $V_s: \mathcal{T}(s) \times L^{\infty} (\mathcal{F}_s) 
	\to L^{\infty} (\mathcal{F}_s)$ for a stopping time $\tau \in 
	\mathcal{T}(s)$ with a reward $\lambda$ is defined by
	$$V_s(\tau,\lambda) = \mathcal{E}\bigg( \sum_{t=s+1}^{s+\tau} \beta^{t} 
	\left(h(t) - \lambda\right) \; \bigg| \; \mathcal{F}_s\bigg).$$	
\end{definition}

We know that $\gamma(s)$ is defined to be the minimum reward $\lambda$ such 
that, with a choice of $\tau$ minimizing $V_s(\tau, \lambda)$, the expected 
loss 
is at most zero. By minimality of $\gamma(s)$ and monotonicity of 
$\mathcal{E}$, the reward $\gamma(s)$ will yield zero loss under optimal 
stopping and, therefore, cannot yield a positive expected reward under 
suboptimal stopping. In particular, the following holds.

\begin{theorem}
	\label{no benefit lemma}
	The function $V_s$ defined above satisfies.
	$$\essinf_{\tau \in \mathcal{T}(s)}V_s(\tau, \gamma(s)) = 0. $$	
	\begin{proof}
		This can be done by showing that $V_s$ satisfies the regularity 
		assumptions 
		of Lemma 
		\ref{double inf relation}. 
		
		By considering $\lambda = C+1$, where $C$ is an upper bound on $h$, we 
		see that $V_s(\tau, C+1) < 0$. As $h(t) \geq 0$, it also follows that 
		$V_s(\tau,0) \geq 0$. Hence, condition (i) is satisfied.
		
		For condition (ii), suppose that $\lambda' > \lambda$. Then
		\begin{align*}
		\bigg( \sum_{t=s+1}^{s+\tau} \beta^{t} \big(h(t) - 
		\lambda\big) \bigg) &= \bigg( \sum_{t=s+1}^{s+\tau} \beta^{t} \big(h(t) 
		- 
		\lambda'\big) \bigg) +  \sum_{t=s+1}^{s+\tau} 
		\beta^{t}(\lambda'-\lambda) \\
		&\leq \bigg( \sum_{t=s+1}^{s+\tau} \beta^{t} \big(h(t) - 
		\lambda'\big) \bigg) +  
		\Big(\frac{\beta^{s+1}}{1-\beta}\Big)(\lambda'-\lambda).
		\end{align*}
		
		By monotonicity and translation equivariance, we have
		\begin{align*}
		0 & \leq  V_s(\tau,\lambda) - V_s(\tau,\lambda') \\ 
		& =  \mathcal{E}\bigg( \sum_{t=s+1}^{s+\tau} \beta^{t} \big(h(t) - 
		\lambda\big) \; \bigg| \; \mathcal{F}_s\bigg) - \mathcal{E}\bigg( 
		\sum_{t=s+1}^{s+\tau} \beta^{t} \big(h(t) - \lambda'\big) \; \bigg| 
		\; \mathcal{F}_s\bigg)\\
		&\leq \Big(\frac{\beta^{s+1}}{1-\beta}\Big)(\lambda'-\lambda).
		\end{align*}
		So, $V_s$ is Lipschitz in $\lambda$.
		
		Condition (iii) follows from $(\mathcal{F}_t)_{t\ge 0}$-regularity 
		of $\mathcal{E}$ (Remark \ref{remark: regularity}). The result follows from Lemma \ref{double inf relation}.
	\end{proof}
\end{theorem}

\begin{corollary}
	\label{cost_vs_fair charge}
	For every $\tau \in \mathcal{T}(s)$, we have
	$$\mathcal{E}\bigg( \sum_{t=s+1}^{s+\tau} \beta^{t} \big(h(t) - 
	\gamma(s)\big) \bigg| \mathcal{F}_s\bigg) \geq 0.$$
\end{corollary}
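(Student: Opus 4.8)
The plan is to read this corollary off directly from Theorem~\ref{no benefit lemma}, which has just been established. First I would note that, by the definition of the value function $V$, the quantity
\[
\mathcal{E}\bigg( \sum_{t=1}^{\tau} \beta^{t} \big(h(s+t) - \gamma(s)\big) \bigg| \mathcal{F}_s\bigg)
\]
appearing in the statement is exactly $V(\tau, \gamma(s))$, that is, the value function evaluated at the reward level $\lambda = \gamma(s)$. The task therefore reduces to showing that $V(\tau, \gamma(s)) \geq 0$ holds $\mathbb{P}$-a.s.\ for every fixed $\tau \in \mathcal{T}(s)$.

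The key step is to invoke the defining property of the essential infimum: for any family of random variables, each member dominates the essential infimum of the family almost surely. Concretely, for every fixed $\tau \in \mathcal{T}(s)$ we have
\[
V(\tau, \gamma(s)) \;\geq\; \essinf_{\tau' \in \mathcal{T}(s)} V(\tau', \gamma(s)) \qquad \mathbb{P}\text{-a.s.},
\]
since the essential infimum (taken in $L^\infty(\mathcal{F}_s)$) is by construction a lower bound for the family. Theorem~\ref{no benefit lemma} identifies the right-hand side as $0$, so $V(\tau, \gamma(s)) \geq 0$ a.s., which is precisely the assertion of the corollary.

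I do not expect any genuine obstacle here, as the corollary is essentially a restatement of Theorem~\ref{no benefit lemma}: all the substantive work---verifying the regularity hypotheses of Lemma~\ref{double inf relation} and locating the value of the essential infimum at $0$---was carried out in the proof of that theorem. The only point requiring a little care is that the inequality is an almost-sure statement rather than a pointwise one, reflecting the fact that the essential infimum is taken in $L^\infty(\mathcal{F}_s, \mathbb{P})$; this matches the convention used throughout and causes no difficulty.
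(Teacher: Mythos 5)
Your proposal is correct and matches the paper's (implicit) reasoning exactly: the paper states this as an immediate corollary of Theorem \ref{no benefit lemma}, relying on precisely the observation that each $V(\tau,\gamma(s))$ dominates the essential infimum, which that theorem identifies as $0$. No further comment is needed.
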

\begin{remark}
	\label{gamma as average cost}
	Theorem \ref{no benefit lemma} shows that, under optimal stopping, with the 
	reward $\gamma(s)$, the expected total loss is zero. In particular, we may 
	view $\gamma(s)$ as an `average cost under optimal play' of the bandit. 
\end{remark}
\subsection{Step A.2: Optimal Stopping time}
By considering a Snell envelope argument, as in Riedel 
\cite{nonlinear_snell_envelope} with slight modification, we can establish 
that a stopping time $\tau^*$ achieving the minimum value $V_s(\tau^*, \lambda) 
= \essinf_{\tau \in 
	\mathcal{T}(s)} 
V_s(\tau, \lambda)$ exists (Theorem \ref{existence of optimal stopping time}). 
In this subsection, we will show that $\tau^*$ can be expressed as a hitting 
time of the Gittins index process $(\gamma(s))$.

\begin{definition}
	\label{optimal stop upto lambda} Let $\lambda$ be a non-negative 
	$\mathcal{F}_s$-measurable random variable. Define a stopping time 
	$\sigma(s,\lambda)$ by	
	$$\sigma(s,\lambda) := 	\inf\{\theta \geq 1 \; : \; \gamma(s + \theta) > 
	\lambda \}$$
\end{definition}

As mentioned in Remark \ref{gamma as average cost}, we may view $\gamma$ as 
a time-average cost under optimal stopping. The stopping time 
$\sigma(s,\lambda)$ can be interpreted as the first time when this average 
cost exceeds a fixed $\lambda$. Once $\gamma$ exceeds $\lambda$, the offered 
compensation $\lambda$ is insufficient to make the bandit attractive so, to 
minimize the total `expected' cost, we will stop.

In what follows, we formalize this intuition. We will show that 
$\sigma(s,\lambda)$ is an optimal stopping time when the reward $\lambda$ 
is offered. In particular, we will show that $\sigma(s,\gamma(s))$ attains 
the optimal value with the reward $\lambda = \gamma(s)$. Moreover, the 
value for this optimal stopping problem is zero (by Theorem
\ref{no benefit lemma}).

The optimality of $\sigma(s, \lambda)$ can be proved by showing that for any 
stopping time $\tau \in \mathcal{T}(s)$, if $\tau > \sigma(s, \lambda)$ on some 
event, our value can be improved by stopping at $\sigma(s, \lambda)$ (Lemma 
\ref{optimal stopping1}). On the other hand, if $\tau < \sigma(s, \lambda)$ on 
some event, the value can be improved by continuing to play  
(Lemma \ref{optimal stopping2}). The easy proofs of these Lemmata are in the 
appendix \ref{Append: additional result}.
\begin{lemma}
	\label{optimal stopping1}
	For every $\lambda \in L^\infty(\mathcal{F}_s)$  taking values in $[0,C)$
	and $\tau \in \mathcal{T}(s)$, 
	$$V_s(\tau, \lambda) \; \geq \; V_s(\tau \wedge \sigma(s,\lambda) , 
	\lambda).$$
	\begin{proof}
		We will prove this result by applying Corollary \ref{cost_vs_fair 
		charge} 
		together with time-consistency and monotonicity of our nonlinear 
		expectation. 
		
		Define $\nu = \tau \wedge \sigma(s,\lambda)$. By Corollary 
		\ref{cost_vs_fair charge} and regularity  (Remark \ref{remark: regularity}),	
		\begin{align*}
		0 & \leq  \mathcal{E}\bigg( \sum_{t=s+\nu+1}^{s+\tau} \beta^{t} 
		\left(h(t) - \gamma(s+\nu)\right) \bigg|  \mathcal{F}_{s + 
			\nu}\bigg)	\\
		& =  \mathbb{I}(\tau > \sigma(s,\lambda)) \mathcal{E}\bigg( 
		\sum_{t=s+\nu+1}^{s+\tau} \beta^{t} \left(h(t) - \gamma(	
		s+\sigma(s,\lambda))\right)  \bigg| \mathcal{F}_{s+\nu}\bigg) + 
		\mathbb{I}(\tau \leq \sigma(s,\lambda))(0) \\
		& \leq  \mathbb{I}(\tau > \sigma(s,\lambda)) \mathcal{E}\bigg( 
		\sum_{t=s+\nu+1}^{s+\tau} \beta^{t} \left(h(t) - \lambda 
		\right) \bigg| \mathcal{F}_{s+\nu}\bigg) \\
		& =   \mathcal{E}\bigg( \sum_{t=s+\nu+1}^{s+\tau} \beta^{t } 
		\left(h(t) - \lambda \right) \bigg| 
		\mathcal{F}_{s+\nu}\bigg).
		\end{align*}
		By translation equivariance,
		\begin{align*}
		\sum_{t=s+1}^{s+\nu } \beta^{t} \left(h(t) - \lambda\right)
		&\leq  \sum_{t=s+1}^{s+\nu} \beta^{t} \left(h(t) - \lambda\right) + 
		\mathcal{E}\bigg( \sum_{t=s+\nu+1}^{s+\tau} \beta^{t } 
		\left(h(t) - \lambda \right) \bigg| 
		\mathcal{F}_{s+\nu}\bigg)\\
		&  =  \mathcal{E}\bigg( \sum_{t=s+1}^{s+\tau} \beta^{t} \left(h(t) 
		- 
		\lambda \right)  \bigg| \mathcal{F}_{s+\nu}\bigg).
		\end{align*}
		By monotonicity and {time-consistency},
		\begin{align*}
		\mathcal{E}\bigg( \sum_{t=s+1}^{s + \nu} \beta^{t} 
		\left(h(t) - \lambda\right)  \bigg|  \mathcal{F}_s\bigg) 
		& \leq  	\mathcal{E} \bigg( \mathcal{E}\bigg( \sum_{t=s+1}^{s+\tau } 
		\beta^{t} \left(h(t) - \lambda\right) \bigg| 
		\mathcal{F}_{s+\nu}\bigg)  \bigg|  \mathcal{F}_s\bigg) \\& =  
		\mathcal{E}\bigg( \sum_{t=s+1}^{s+\tau} \beta^{t} \left(h(t) - 
		\lambda\right) \bigg| \mathcal{F}_s\bigg).
		\end{align*}	
		In particular, $V_s(\tau \wedge \sigma(s,\lambda) , \lambda) = V_s(\nu 
		, 
		\lambda) \leq V_s(\tau , \lambda)$.
	\end{proof}
\end{lemma}
\begin{lemma}
	\label{optimal stopping2}
	Let $\tau \in \mathcal{T}(s)$ and let $\lambda \in L^\infty(\mathcal{F}_s)$ 
	taking values in $[0,C)$. 
	Then there exists a stopping time $\tau_1 \in 
	\mathcal{T}(s)$ with  $\tau_1 \geq \tau$ such that 
	$$V_s(\tau, \lambda) \geq V_s(\tau_1, \lambda)$$
	and on the event $A := \{\gamma(s + \tau) \leq \lambda\}$, we have $\tau_1 
	> \tau$.
	\begin{proof} 
		For $A = \{\gamma(s+\tau) \leq \lambda\}$, define $\gamma^{\tau} := 
		\gamma(s + \tau)\mathbb{I}_{A^c} + \lambda \mathbb{I}_A \geq \gamma(s + 
		\tau)$. By Theorem \ref{no benefit lemma} and monotonicity of our 
		nonlinear 
		expectation, 
		\begin{align*}
		0 & = \essinf_{\tilde{\tau} \in \mathcal{T}(s + \tau)} \mathcal{E} 
		\bigg(\sum_{t=s+\tau + 1}^{s+\tau + \tilde{\tau}}\beta^{t} \left(h(t) - 
		\gamma(s + \tau) \right) \bigg| \mathcal{F}_{s+\tau}\bigg) \\
		& \geq  \essinf_{\tilde{\tau} \in \mathcal{T}(s + \tau)} \mathcal{E} 
		\bigg(\sum_{t=s + \tau+1}^{s + \tau + \tilde{\tau}}\beta^{t} \left(h(t) 
		- 
		\gamma^{\tau} \right)  \bigg| \mathcal{F}_{s+\tau}\bigg).
		\end{align*}
		
		Thus, by Theorem \ref{existence of optimal stopping time}, there exists 
		$\tilde{\tau}^* \in \mathcal{T}(s + \tau)$ such that,
		\begin{align*}
		0 & \geq  \mathcal{E} 
		\bigg(\sum_{t=s + \tau+1}^{s + \tau + \tilde{\tau}^*}\beta^{t} 
		\left(h(t) 
		- 
		\gamma^{\tau} \right)  \bigg| \mathcal{F}_{s+\tau}\bigg).
		\end{align*}
		
		Define a stopping time $\tau_1 := \tau + \tilde{\tau}^* \; 
		\mathbb{I}_A$. 
		As $\gamma^\tau \mathbb{I}_A = \lambda \mathbb{I}_A$, then
		\begin{align*}
		\sum_{t=s + 1}^{s+\tau_1}\beta^{t} \left(h(t) - 
		\lambda\right) &=  
		\sum_{t=s + 1}^{s+\tau}\beta^{t} \left(h(t) - \lambda\right)	+ 
		\mathbb{I}_A \sum_{t=s+\tau + 1}^{s + \tau + \tilde{\tau}^*}\beta^{t} 
		\left(h(t) - \lambda \right)\\
		&=  
		\sum_{t=s + 1}^{s+\tau}\beta^{t} \left(h(t) - \lambda\right)	+ 
		\mathbb{I}_A \sum_{t=s+\tau + 1}^{s + \tau + \tilde{\tau}^*}\beta^{t} 
		\left(h(t) - \gamma^\tau \right).
		\end{align*}
		By translation equivariance and regularity (Remark \ref{remark: 
		regularity}), it follows that
		\begin{align*}
		&\mathcal{E} \bigg(\sum_{t=s + 1}^{s+\tau_1}\beta^{t} \left(h(t) - 
		\lambda\right)\bigg| \mathcal{F}_{s+\tau}\bigg) 
		\\& \qquad = \sum_{t=s + 
			1}^{s+\tau}\beta^{t} \left(h(t) - \lambda\right) + 
			\mathbb{I}_A\mathcal{E} 
		\bigg(\sum_{t=s + 1}^{s+\tau + \tilde{\tau}^*}\beta^{t} \left(h(t) - 
		\gamma^\tau\right)\bigg| \mathcal{F}_{s+\tau}\bigg) 
		\\ & \qquad \geq \sum_{t=s + 
			1}^{s+\tau}\beta^{t} \left(h(t) - \lambda\right).
		\end{align*}
		Finally, by applying monotonicity and time-consistency as in the 
		previous 
		lemma, the result follows.
	\end{proof}
\end{lemma}

\begin{corollary}
	\label{optimal stopping3}
	Let $\tau \in \mathcal{T}(s)$. Then there exists an increasing sequence 
	$(\tau_n)_{n \geq 1}$ in $\mathcal{T}(s)$ with $\tau_{n+1}\geq \tau_n \geq 
	\tau $ for all $n \geq 1$ such that 
	$$V_s(\tau, \lambda) \geq V_s(\tau_1, \lambda) \geq ...  \geq V_s(\tau_n, 
	\lambda)$$
	and on the event
	$\bigcap_{k=1}^{n-1} \left\{\gamma(s+\tau_k) \leq \lambda \right\}$
	we have $\tau_n > \tau_{n-1} > ... > \tau_1 > \tau$. In particular, on this 
	event, $\tau_n \geq n$. 
\end{corollary}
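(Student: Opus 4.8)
The plan is to build the sequence $(\tau_n)$ by iterating Lemma \ref{optimal stopping2}, starting from $\tau_0 := \tau$. At each stage I assume $\tau_n$ has been constructed with $\tau_n \ge \tau$ and the chain of value inequalities already established, and I apply Lemma \ref{optimal stopping2} to $\tau_n$ (with the same $\lambda$) to obtain $\tau_{n+1} \in \mathcal{T}(s)$ satisfying $\tau_{n+1} \ge \tau_n$, $V(\tau_n,\lambda) \ge V(\tau_{n+1},\lambda)$, and $\tau_{n+1} > \tau_n$ on the event $\{\gamma(s+\tau_n) \le \lambda\}$. Concatenating the single-step value inequalities immediately gives $V(\tau,\lambda) \ge V(\tau_1,\lambda) \ge \cdots \ge V(\tau_n,\lambda)$, so the value-monotonicity part of the statement is essentially automatic.

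The substantive part is the strict-increase claim on the event $E_{n} := \bigcap_{k=1}^{n-1}\{\gamma(s+\tau_k) \le \lambda\}$. First I would observe that this intersection is nested decreasing in $n$, and that on $E_n$ each of the hypotheses needed to invoke the strict-increase conclusion of Lemma \ref{optimal stopping2} at the earlier stages is met. Concretely, on $E_n$ we have $\gamma(s+\tau_k) \le \lambda$ for each $k = 1,\dots,n-1$, so the lemma's conclusion forces $\tau_{k+1} > \tau_k$ for each such $k$; combined with $\tau_1 > \tau$ (which comes from applying the lemma at the initial stage, where the relevant event is $\{\gamma(s+\tau)\le\lambda\}$ — I should check whether the indexing convention makes $E_n$ already capture this base case, or whether it must be handled separately), this yields the strict chain $\tau_n > \tau_{n-1} > \cdots > \tau_1 > \tau$ on $E_n$.

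The final assertion that $\tau_n \ge n$ on $E_n$ then follows from integrality: each $\tau_k$ is a positive integer-valued stopping time (they take values in $\{1,\dots,T-s\}$, as positive $(\mathcal{F}_{s+t})$-stopping times in discrete time), and a strictly increasing chain of nonnegative integers starting above $0$ must satisfy $\tau_1 \ge 1$, $\tau_2 \ge 2$, and inductively $\tau_n \ge n$.

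The main obstacle I anticipate is purely bookkeeping: getting the event-indexing exactly right so that the strict inequality $\tau_{k+1} > \tau_k$ is licensed precisely on $E_n$ and no larger event, and making sure the base case $\tau_1 > \tau$ aligns with the stated intersection (which runs from $k=1$ to $n-1$ and so a priori does not mention $\gamma(s+\tau_0) = \gamma(s+\tau)$). I would resolve this by formulating the induction hypothesis to include both the value chain and the statement ``on $\bigcap_{k=0}^{n-1}\{\gamma(s+\tau_k)\le\lambda\}$ we have $\tau_n > \cdots > \tau_1 > \tau$'', then reconcile the $k=0$ term with the paper's convention at the end; no genuine analytic difficulty should arise beyond this indexing care, since all the real work is already packaged in Lemma \ref{optimal stopping2}.
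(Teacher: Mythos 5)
Your proposal is correct and is essentially the paper's own (implicit) argument: the corollary is exactly the $n$-fold iteration of Lemma \ref{optimal stopping2}, with the chain of value inequalities concatenated and the bound $\tau_n \geq n$ coming from integrality of positive stopping times. The indexing wrinkle you flag is genuine — the stated intersection $\bigcap_{k=1}^{n-1}\{\gamma(s+\tau_k)\leq\lambda\}$ omits the event $\{\gamma(s+\tau)\leq\lambda\}$, so on it one only gets $\tau_1\geq\tau$ rather than $\tau_1>\tau$ — but this is harmless, since the conclusion actually used later (in Theorem \ref{optimal stopping}) is $\tau_n\geq n$, which follows from $\tau_1\geq 1$ together with the strict increases $\tau_{k+1}>\tau_k$ for $k=1,\dots,n-1$ that the stated intersection does license.
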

By combining these observations with the Lebesgue property of $\mathcal{E}$, 
we have the following theorem.
\begin{theorem}	
	\label{optimal stopping}
	For every $\lambda \in L^\infty(\mathcal{F}_s)$ taking values in $[0,C)$
	and $\tau \in \mathcal{T}(s)$, we have
	
	$$V_s(\tau, \lambda) \; \geq \; V_s(\sigma(s,\lambda), \lambda)= 
	\essinf_{\tau \in \mathcal{T}(s)}V_s(\tau, \lambda).$$
	Therefore, 
	$$V_s(\sigma(s,\gamma(s)), \gamma(s)) = \essinf_{\tau \in 
		\mathcal{T}(s)}V_s(\tau, \gamma(s)) = 0.$$
	In particular, $\sigma(s,\gamma(s))$ yields equality in Corollary 
	\ref{cost_vs_fair charge}.
	\begin{proof}
		By Lemma \ref{optimal stopping1} and Corollary \ref{optimal stopping3},
		\begin{equation*}
		V_s(\tau, \lambda) \geq V_s(\tau_n, \lambda) \geq V_s(\tau_n \wedge 
		\sigma(s,\lambda), \lambda).
		\end{equation*}
		Observe that by Corollary \ref{optimal stopping3},
		$$\left\{\tau_n < \sigma(s,\lambda) \right\} = 
		\{\gamma(s+\theta) \leq \lambda \;\; \forall \theta \leq \tau_n\} 
		\subseteq 
		\bigcap_{k=1}^{n-1} \left\{\gamma(s+\tau_k) \leq \lambda \right\} 
		\subseteq \{\tau_n \geq n \}.$$
		Hence, it follows that $\tau_n \wedge 
		\sigma(s,\lambda) \to \sigma(s,\lambda)$ as $n \to \infty$ and thus 
		$$\sum_{t=s+1}^{s+\tau_n \wedge \sigma(s,\lambda) }\beta^{t} \left(h( 
		t) - \lambda\right) \longrightarrow \sum_{t=s+1}^{s+\sigma(s,\lambda) 
		}\beta^{t} \left(h(t) - \lambda\right) \;\;\; \text{as} \;\; n \to 
		\infty \;\; \text{for all} \; \omega \in \Omega.$$
		As $h$ is bounded, it follows from the Lebesgue property of our 
		nonlinear expectation that
		$$\mathcal{E} \bigg(\sum_{t=s+1}^{s+\tau_n \wedge 
			\sigma(s,\lambda)}\beta^{t} \big(h(t) - \lambda\big)  \; \bigg| \; 
		\mathcal{F}_{s}\bigg) \longrightarrow \mathcal{E} 
		\bigg(\sum_{t=s+1}^{s+\sigma(s,\lambda)}\beta^{t} \big(h(t) - 
		\lambda\big)  \; \bigg| \; \mathcal{F}_{s}\bigg). $$
		In particular,
		$V_s(\tau, \lambda) \geq V_s(\sigma(s,\lambda), \lambda).$
	\end{proof}
\end{theorem}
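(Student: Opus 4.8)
The plan is to sandwich an arbitrary $\tau \in \mathcal{T}(s)$ between the two one-sided improvement results already established, and then pass to a limit. Lemma \ref{optimal stopping1} tells us that stopping no later than $\sigma(s,\lambda)$ never increases the value, while Corollary \ref{optimal stopping3} tells us that, as long as $\gamma$ has not yet exceeded $\lambda$, it is strictly beneficial to keep playing. Combining these two monotone movements should force any candidate $\tau$ onto the hitting time $\sigma(s,\lambda)$, at which point the claimed optimality follows, and the value at $\lambda = \gamma(s)$ is pinned to zero by Theorem \ref{no benefit lemma}.

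Concretely, first I would invoke Corollary \ref{optimal stopping3} to produce an increasing sequence $(\tau_n) \subseteq \mathcal{T}(s)$ with $\tau_n \geq \tau$ and
$$V(\tau,\lambda) \geq V(\tau_1,\lambda) \geq \cdots \geq V(\tau_n,\lambda),$$
where, on $\bigcap_{k=1}^{n-1}\{\gamma(s+\tau_k)\leq \lambda\}$, one has $\tau_n \geq n$. Applying Lemma \ref{optimal stopping1} to each $\tau_n$ then gives
$$V(\tau_n,\lambda) \geq V(\tau_n \wedge \sigma(s,\lambda),\lambda).$$
The crucial observation is the set inclusion $\{\tau_n < \sigma(s,\lambda)\} \subseteq \bigcap_{k=1}^{n-1}\{\gamma(s+\tau_k)\leq\lambda\} \subseteq \{\tau_n \geq n\}$: if $\tau_n$ has not reached the first time $\gamma$ exceeds $\lambda$, then by definition of $\sigma(s,\lambda)$ all the earlier $\tau_k$ also lie strictly before that time, so the strict-increase clause of Corollary \ref{optimal stopping3} applies. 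Since $\sigma(s,\lambda) \leq T-s$ is bounded, this event is empty for $n > T-s$, whence $\tau_n \wedge \sigma(s,\lambda) \to \sigma(s,\lambda)$ pointwise on $\Omega$.

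To finish, I would transfer this pathwise convergence through $\mathcal{E}$. Because $h$ is bounded (Assumption \ref{assumption for cost process}) the partial sums are uniformly bounded, so the Lebesgue property of the nonlinear expectation yields $V(\tau_n \wedge \sigma(s,\lambda),\lambda) \to V(\sigma(s,\lambda),\lambda)$. Chaining the inequalities gives $V(\tau,\lambda) \geq V(\sigma(s,\lambda),\lambda)$ for every $\tau$, and since $\sigma(s,\lambda) \in \mathcal{T}(s)$ itself attains this bound, it is the essential infimum. Specializing to $\lambda = \gamma(s)$, which lies in $[0,C)$ for $s<T$ (the case $s \geq T$ being trivial since then $\sigma(s,\gamma(s)) = 0$ and $V(0,\cdot) = 0$), and combining with $\essinf_{\tau}V(\tau,\gamma(s)) = 0$ from Theorem \ref{no benefit lemma} delivers the final equality; in particular $\sigma(s,\gamma(s))$ attains equality in Corollary \ref{cost_vs_fair charge}.

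The main obstacle I anticipate is precisely the set-inclusion step that links the two improvement lemmas: one must argue carefully that \emph{failing} to reach $\sigma(s,\lambda)$ forces $\tau_n$ to be large (at least $n$), so that the truncation $\tau_n \wedge \sigma(s,\lambda)$ genuinely converges to $\sigma(s,\lambda)$ rather than stalling below it. Once that is secured, everything downstream is a routine application of the Lebesgue property together with the boundedness of $h$.
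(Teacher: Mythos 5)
Your proposal is correct and follows essentially the same route as the paper's own proof: you build the increasing sequence via Corollary \ref{optimal stopping3}, truncate with Lemma \ref{optimal stopping1}, use the same set inclusion $\{\tau_n < \sigma(s,\lambda)\} \subseteq \bigcap_{k=1}^{n-1}\{\gamma(s+\tau_k)\leq\lambda\} \subseteq \{\tau_n \geq n\}$ to force pointwise convergence of $\tau_n \wedge \sigma(s,\lambda)$ to $\sigma(s,\lambda)$, and pass to the limit with the Lebesgue property before specializing to $\lambda = \gamma(s)$ via Theorem \ref{no benefit lemma}. No gaps to report.
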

\begin{remark}
	Bank and El Karoui \cite{Bank_El_karoui_rep_thm} consider a similar 
	result to this theorem, but under a classical expectation with the 
	summation $\sum_{t=s+1}^{s+\tau} \beta^{t} \left(h(t) - \gamma(s)\right)$ 
	replaced by a more general function in continuous time. (See also 
	\cite{Running_max_optional_stopping} and \cite{Bank_proof_Gittins} for 
	further discussion).
\end{remark}
\subsection{Step A.3: Fair Game and Prevailing process}
Previously, we considered an optimal stopping problem when the Gittins index 
is offered as compensation for continued play. In this subsection, we consider 
a `fair game' when we 
offer a compensation which is (just) sufficient to encourage us to 
continue playing the bandit. In particular, the 
compensation increases at each optimal stopping time in order to encourage the 
agent to 
continue.

We will first define a sequence of optimal stopping times that we have to 
consider in order to analyze our (minimal) compensation process.
\begin{definition}
	\label{optimal sequence}
	We define $\hat{S}_n$ to be the stopping time where the Gittins index 
	process $\left(\gamma(s)\right)_{s \geq 0}$ exceeds its running maximum for 
	the $n$th time. We write $\sigma_n$  for the duration between $\hat{S}_n$ 
	and $\hat{S}_{n+1}$, that is, $\sigma_n$ is a random time identifying how 
	long after time $\hat{S}_n$ the process $\left(\gamma(s)\right)_{s \geq 0}$ 
	hits a new maximum. 
	
	More precisely, we define $\hat{S}_n$ and $\sigma_n$ inductively:
	\begin{enumerate}[(i)]
		\item  Let $\hat{S}_0 := 0$.
		\item  Given $\hat{S}_n$, define 
		$$\sigma_n := \inf\{\theta \geq 1 \; : \; \gamma(\hat{S}_n + \theta) 
		>
		\gamma(\hat{S}_n) \}$$
		and $\hat{S}_{n+1} := \hat{S}_n + \sigma_n$.
	\end{enumerate}
	Equivalently, we can define $\sigma_n := \sigma(\hat{S}_n, 
	\gamma(\hat{S}_n))$ as in Definition \ref{optimal stop upto lambda}.
\end{definition}
\begin{definition}
	\label{prevailing process}
	We define the \emph{prevailing reward} process $\Gamma$ by the running 
	maximum of $\gamma$, that is,
	$$\Gamma(t) := \max_{0 \leq \theta \leq t-1} \gamma(\theta).$$	
\end{definition}
We can then show that the process $\Gamma$ serves as an indifference reward 
(process) for our agent, when evaluated from the perspective of one of the 
stopping times $\hat{S}_n$.
\begin{proposition}
	\label{fair game}
	For all $n \in \mathbb{N}$,
	$$\mathcal{E}\bigg(\sum_{t=\hat{S}_n + 1}^\infty \beta^t \left(h(t) - 
	\Gamma(t)\right) \bigg| \mathcal{F}_{\hat{S}_n}\bigg) = 0.$$
	In particular, 
	$$\mathcal{E}\bigg(\sum_{t=1}^\infty \beta^t \big(h(t) - \Gamma(t)\big) 	
	\bigg) = 0.$$ 	
	\begin{proof}
		By Theorem \ref{optimal stopping}, we have, for all $k \in \mathbb{N}$,
		$$0  =   \mathcal{E}\bigg(\sum_{t = \hat{S}_k + 1}^{\hat{S}_{k+1}} 
		\beta^t \big(h(t) - \gamma(\hat{S}_n)\big)\bigg| 
		\mathcal{F}_{\hat{S}_k}\bigg).$$
		Fix $n,N \in \mathbb{N}$ with $N \geq n$, by time-consistency, 
		translation equivariance, 
		\begin{align*}
		&\mathcal{E}\bigg(\sum_{t = {\hat{S}_n} + 1}^{\hat{S}_{N}} \beta^t 
		\big(h(t) - \Gamma(t)\big) \bigg| \mathcal{F}_{\hat{S}_n}\bigg) \\ 
		& =  \mathcal{E}\Bigg(\sum_{t = {\hat{S}_n} + 1}^{\hat{S}_{N-1}} 
		\beta^t \big(h(t) - \Gamma(t)\big) + \mathcal{E}\bigg(\sum_{t = 
			\hat{S}_{N-1} + 1}^{\hat{S}_{N}} \beta^t \big(h(t) - 
		\Gamma(t)\big)\bigg| \mathcal{F}_{\hat{S}_{N-1}}\bigg) \Bigg| 
		\mathcal{F}_{\hat{S}_n}\Bigg) \\
		& =  \mathcal{E}\Bigg(\sum_{t = {\hat{S}_n} + 1}^{\hat{S}_{N-1}} 
		\beta^t \big(h(t) - \Gamma(t)\big) + \mathcal{E}\bigg(\sum_{t = 
			\hat{S}_{N-1} + 1}^{\hat{S}_{N}} \beta^t \big(h(t) - 
		\gamma(\hat{S}_{N-1})\big)\bigg| \mathcal{F}_{\hat{S}_{N-1}}\bigg) 
		\Bigg| \mathcal{F}_{\hat{S}_n} \Bigg)\\
		& =  \mathcal{E}\bigg(\sum_{t = {\hat{S}_n} + 1}^{\hat{S}_{N-1}} 
		\beta^t \big(h(t) - \Gamma(t)\big) \bigg| 
		\mathcal{F}_{\hat{S}_n}\bigg)  =  \mathcal{E}\bigg(\sum_{t = 
			{\hat{S}_n} + 1}^{\hat{S}_{N-2}} \beta^t \big(h(t) - \Gamma(t)\big) 
		\bigg| \mathcal{F}_{\hat{S}_n}\bigg)\\ 
		&=  \cdots  =  0.
		\end{align*}	
		By our definition of $\hat{S}_n$, we have $\hat{S}_N \geq N$. Hence, 
		$\hat{S}_N \to \infty$ as $N \to \infty$. Therefore, by applying 
		Lebesgue property, the result follows. 
	\end{proof}
\end{proposition}
Intuitively, as $\Gamma(t) \geq \gamma(t-1)$, the process $\Gamma$ should be 
sufficient to compensate for continuing to play. This means that the total 
`expected' loss, evaluated from any point in time, must be non-positive if a 
reward $\Gamma(t)$ is offered. This is stated formally in the following lemma 
and theorem.
\begin{lemma}
	\label{recovering reward}
	Let $\tau \in \mathcal{T}(s)$ with $1 \leq \tau \leq \sigma := 
	\sigma(s,\lambda)$. 
	Then 
	$$\mathcal{E}\bigg(\sum_{t=s+\tau+1}^{s+\sigma} \beta^t \left(h(t) - 
	\lambda 
	\right) \bigg| \mathcal{F}_{s+\tau}\bigg) \leq 0. $$
	\begin{proof}
		Write $H_\tau := \mathcal{E}\left(\sum_{t=s+\tau+1}^{s+\sigma} \beta^t 
		\Big(h(t) - \lambda 
		\Big) \Big| \mathcal{F}_{s+\tau}\right)$ and $A := 
		\left\{H_\tau > 0 \right\}$. 
		
		Define $\tilde{\sigma} := \tau \; \mathbb{I}_A + \sigma \; 
		\mathbb{I}_{A^c}$.
		\begin{align*}
		&\mathcal{E}\bigg(\sum_{t=s+1}^{s+\sigma} \beta^t \left(h(t) 
		- 
		\lambda 
		\right) \bigg| \mathcal{F}_{s+\tau}\bigg)\\
		& \qquad= \sum_{t=s+1}^{s+\tau} 
		\beta^t \left(h(t) - 
		\lambda \right) + 	\mathcal{E}\bigg(\sum_{t=s+\tau+1}^{s+\sigma} 
		\beta^t 
		\left(h(t) - 
		\lambda 
		\right) \bigg| \mathcal{F}_{s+\tau}\bigg) \\
		& \qquad \geq  \sum_{t=s+1}^{s+\tau} 
		\beta^t \left(h(t) - 
		\lambda \right) + 	\mathcal{E}\bigg(\sum_{t=s+\tau+1}^{s+\sigma} 
		\beta^t 
		\left(h(t) - 
		\lambda 
		\right) \bigg| \mathcal{F}_{s+\tau}\bigg) \mathbb{I}_{A^c} \\
		& \qquad = \mathcal{E}\bigg(\sum_{t=s+1}^{s+\tilde{\sigma}} \beta^t 
		\left(h(t) - 
		\lambda 
		\right) \bigg| \mathcal{F}_{s+\tau}\bigg).
		\end{align*}
		
		Moreover, the above inequality is strict on $A$. Hence, if $A$ is not a 
		$\mathbb{P}$-null set, it then follows from strict monotonicity that
		$$	\mathcal{E}\bigg(\sum_{t=s+1}^{s+\sigma} \beta^t \left(h(t) - 
		\lambda 
		\right) \bigg| \mathcal{F}_{s}\bigg)  > 	
		\mathcal{E}\bigg(\sum_{t=s+1}^{s+\tilde{\sigma}} 
		\beta^t 
		\left(h(t) 	- 	\lambda    \right) \bigg| \mathcal{F}_{s} \bigg). $$
		This contradicts the minimality of $\sigma(s,\lambda)$ established in 
		Theorem \ref{optimal stopping}.
	\end{proof}
\end{lemma}
\begin{theorem}
	\label{expected recovering reward}
	For all $N \in \mathbb{N}$,
	$$\mathcal{E}\bigg(\sum_{t=N+1}^\infty \beta^t \big(h(t) - \Gamma(t)\big) 
	\bigg| \mathcal{F}_N  \bigg) \leq 0.$$
	\begin{proof}
		Define $\tau_n := (\hat{S}_{n+1} \wedge N) \vee \hat{S}_n$. Since 
		$\hat{S}_n$ is a stopping time for all $n \in \mathbb{N}$, so is 
		$\tau_n$. 
		Hence, by Proposition \ref{fair game} and Lemma \ref{recovering reward},
		\begin{align*}
		&\mathcal{E}\bigg(\sum_{t=\tau_n + 1 }^\infty \beta^t \big(h(t) 
		- \Gamma(t)\big) \bigg| \mathcal{F}_{\tau_n}  \bigg)\\ 
		& = \mathcal{E}\Bigg(\sum_{t=\tau_n + 1 }^{\hat{S}_{n+1}} \beta^t 
		\big(h(t) - \Gamma(t)\big) + 
		\mathcal{E}\bigg(\sum_{t=\hat{S}_{n+1} 
			+1}^{\infty} \beta^t \big(h(t) - \Gamma(t)\big)  \bigg| 
		\mathcal{F}_{\hat{S}_{n+1}} \bigg) \Bigg| \mathcal{F}_{\tau_n}  
		\Bigg) 
		\\
		& =  \mathcal{E}\bigg(\sum_{t=\tau_n + 1 }^{\hat{S}_{n+1}} \beta^t 
		\big(h(t) - \Gamma(t)\big)\bigg| \mathcal{F}_{\tau_n}  \bigg) \; 
		= 
		\; \mathcal{E}\bigg(\sum_{t=\tau_n + 1 }^{\hat{S}_{n+1}} \beta^t 
		\big(h(t) - \gamma(\hat{S}_n)\big) \bigg| \mathcal{F}_{\tau_n}  
		\bigg)	\leq 0.	
		\end{align*}
		Therefore, as $\{\hat{S}_n \leq N < \hat{S}_{n+1}\}$ is 
		$\mathcal{F}_{N}$-measurable, by Lebesgue property and regularity 
		(Remark \ref{remark: regularity}),
		\begin{align*}
		&\mathcal{E}\bigg(\sum_{t=N + 1 }^\infty \beta^t \big(h(t) 
		- 				\Gamma(t)\big) \bigg| \mathcal{F}_N \bigg) \\ 
		&= \mathcal{E}\bigg(\lim_{L\to 
		\infty}\bigg(\sum_{n=0}^{L}\mathbb{I}_{\{\hat{S}_n \leq N < 
			\hat{S}_{n+1}\}}\bigg)\bigg(\sum_{t=\tau_n + 1 }^\infty \beta^t 
		\big(h(t) - \Gamma(t)\big)\bigg) \bigg| \mathcal{F}_N \bigg)\\ 
		&= \lim_{L\to \infty}\sum_{n=0}^{L}\mathbb{I}_{\{\hat{S}_n \leq N < 
			\hat{S}_{n+1}\}}\mathcal{E}\bigg(\sum_{t=\tau_n + 1 }^\infty 
		\beta^t 
		\big(h(t) - \Gamma(t)\big) \bigg| \mathcal{F}_N \bigg)\\ 
		&= \lim_{L\to \infty}\sum_{n=0}^{L}\mathbb{I}_{\{\hat{S}_n \leq N < 
			\hat{S}_{n+1}\}}\mathbb{I}_{\{\tau_n \geq 
			N\}}\mathcal{E}\bigg(\sum_{t=\tau_n + 1 }^\infty \beta^t 
		\big(h(t) - \Gamma(t)\big) \bigg| \mathcal{F}_N \bigg)\\ 
		& =  \lim_{L\to \infty}\sum_{n=0}^{L}\mathbb{I}_{\{\hat{S}_n \leq N < 
			\hat{S}_{n+1}\}}\mathbb{I}_{\{\tau_n \geq 
			N\}}\mathcal{E}\bigg(\mathcal{E}\bigg(\sum_{t=\tau_n + 1 }^\infty 
		\beta^t 
		\big(h(t) - \Gamma(t)\big) \bigg| \mathcal{F}_{\tau_n} \bigg) 
		\bigg| 
		\mathcal{F}_N \bigg) \\
		& \leq  0
		\end{align*}
	\end{proof}	
\end{theorem}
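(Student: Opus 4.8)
The plan is to reduce the desired inequality at the deterministic time $N$ to the exact identity at the random stopping times $\hat{S}_n$ furnished by Proposition \ref{fair game}, by localizing on the random epoch $[\hat{S}_n, \hat{S}_{n+1})$ in which $N$ happens to fall. The structural fact that makes this work is that the prevailing reward $\Gamma$ is \emph{flat} on each epoch, taking the constant value $\gamma(\hat{S}_n)$ there; hence on that epoch the reward $\Gamma(t)$ appearing in the sum coincides with the fixed reward $\gamma(\hat{S}_n)$ of the optimal stopping problem analyzed in Step A.2.

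First I would introduce, for each $n = 0, \dots, T-1$, the clamped time $\tau_n := (\hat{S}_{n+1} \wedge N) \vee \hat{S}_n$. This is an $(\mathcal{F}_t)$-stopping time satisfying $\hat{S}_n \le \tau_n \le \hat{S}_{n+1}$, and, crucially, $\tau_n = N$ on the event $\{\hat{S}_n \le N < \hat{S}_{n+1}\}$. Its purpose is to replace the deterministic horizon $N$ by a stopping time compatible with the epoch structure, so that conditioning at a stopping time and the consistency property of $\mathcal{E}$ become available.

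Next I would establish the epoch-wise estimate
$$\mathcal{E}\bigg(\sum_{t=\tau_n+1}^T \beta^t \big(h(t) - \Gamma(t)\big) \bigg| \mathcal{F}_{\tau_n}\bigg) \le 0.$$
To prove it, I would split the sum at $\hat{S}_{n+1}$ and invoke consistency: the tail from $\hat{S}_{n+1}+1$ to $T$, conditioned at $\mathcal{F}_{\hat{S}_{n+1}}$, vanishes by Proposition \ref{fair game}; on the remaining block from $\tau_n+1$ to $\hat{S}_{n+1}$ the flatness of $\Gamma$ lets me replace $\Gamma(t)$ by $\gamma(\hat{S}_n)$, reducing this block to the value of continued play with fixed reward $\gamma(\hat{S}_n)$ over a sub-interval of the optimal play region $[\hat{S}_n, \hat{S}_n + \sigma_n]$. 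Since $\gamma(t) \le \gamma(\hat{S}_n)$ throughout the interior of the epoch, continuing to play there is favorable, so this block is non-positive by the optimal stopping analysis of Theorem \ref{optimal stopping}.

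Finally I would assemble the result. The events $\{\hat{S}_n \le N < \hat{S}_{n+1}\}$ are $\mathcal{F}_N$-measurable and partition $\bar{\Omega}$, and on each of them $\tau_n = N$; writing $\mathcal{E}(\,\cdot\,|\mathcal{F}_N)$ as the corresponding indicator sum and conditioning once more through $\mathcal{F}_{\tau_n}$, the epoch-wise estimate yields the claim. The hard part will be handling the mismatch between the deterministic time $N$ and the random epoch boundaries $\hat{S}_n$: the clamping construction together with the flatness of $\Gamma$ within an epoch are exactly what allow the optimal-stopping result of Step A.2, stated at the random times $\hat{S}_n$, to be transported to an arbitrary deterministic time.
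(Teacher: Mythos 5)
Your proposal follows essentially the same route as the paper's proof: the same clamped stopping time $\tau_n := (\hat{S}_{n+1}\wedge N)\vee \hat{S}_n$, the same split of the sum at $\hat{S}_{n+1}$ with Proposition \ref{fair game} killing the tail, the same observation that $\Gamma(t)=\gamma(\hat{S}_n)$ on the epoch, and the same assembly over the $\mathcal{F}_N$-measurable partition $\{\hat{S}_n \le N < \hat{S}_{n+1}\}$. The one step you attribute loosely to ``the optimal stopping analysis of Theorem \ref{optimal stopping}'' --- non-positivity of the block from $\tau_n+1$ to $\hat{S}_{n+1}$ under the constant reward $\gamma(\hat{S}_n)$ --- is precisely what the paper isolates as Lemma \ref{recovering reward}, which is indeed proved from Theorem \ref{optimal stopping} via strict monotonicity.
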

\begin{remark}
	The above theorem says that, with compensation $\Gamma(t)$, at any point in 
	time we expect to 
	obtain a net reward from continuing to play, i.e. we have a non-positive 
	expected 
	total loss. 
\end{remark}
\subsection{Step A.4: Reward Delay and Robust Representation Theorem}
In Step A.3, we have shown that our reward $\Gamma$ is defined to be 
(just) sufficient to encourage the player to 
continue playing (Theorem \ref{expected recovering reward}) until the 
horizon (i.e. the total expected loss is zero, as in Proposition 
\ref{fair game}). We now show that taking a 
break from play cannot improve a player's expected discounted costs. We now formulate this 
observation by establishing the existence of a 
probability measure in our representing set $\mathcal{Q}$ such that the expected discounted costs, accounting for the break in play, have a lower bound close to zero. This 
result will be useful when considering multiple bandits. 

\begin{theorem}
	\label{delay and prevailing} By Assumption \ref{assump:existence of nonlinear}, recall that $\mathcal{E}$ admits a robust representation of the form
	$$\mathcal{E}\big( \cdot \big) = \sup_{\mathbb{Q} \in \mathcal{Q}}\mathbb{E}^\mathbb{Q}( \cdot).$$
	For every fixed $\epsilon > 0$, there exists a probability measure 
	$\mathbb{Q} \in \mathcal{Q}$ such that for every predictable decreasing process 
	$(\alpha(t))_{t \geq 0}$ taking values in $[0,1]$, 
	we have 
	$$\mathbb{E}^{\mathbb{Q}}\bigg(\sum_{t=1}^{\infty} \alpha(t)\beta^t 
	\big(h(t) - \Gamma(t) 
	\big) \bigg)  \geq -\epsilon.$$
	\begin{proof}
		By Proposition \ref{fair game} and the robust representation theorem, 
		for a fixed $\epsilon > 0$, we can find a probability measure 
		$\mathbb{Q} \in \mathcal{Q}$ such that
		$$\mathbb{E}^{\mathbb{Q}}\bigg(\sum_{t=1}^{\infty} \beta^t 
		\big(h(t) -\Gamma(t) 
		\big) \bigg)  \geq -\epsilon.$$
		For each predictable decreasing process 
		$(\alpha(t))_{t \geq 0}$ taking values in $[0,1]$, we define  
		\begin{equation*}
		\alpha^N(t) := \begin{cases}
		\alpha(t) & \text{for }  t \leq N, \\
		\alpha(N) & \text{for }  t > N .
		\end{cases}
		\end{equation*}
		We claim that
		\begin{equation}
		\label{induction alpha}
		\mathbb{E}^{\mathbb{Q}}\bigg(\sum_{t=1}^{\infty} \alpha^N(t) \beta^t 
		\big(h(t) - \Gamma(t) \big) \bigg)  \geq -\epsilon.
		\end{equation}			
		Indeed, it is clear that the result holds when $N=0$. 
		
		For the sake of induction, assume that the result holds for a given 
		$N$. We 
		then have
		\begin{equation}
		\label{inequality for delay}
		-\epsilon \leq  
		\mathbb{E}^{\mathbb{Q}}\bigg(\sum_{t=1}^{N} 
		\alpha(t)  \beta^t 
		\big(h(t) -	\Gamma(t) 
		\big) \bigg)  + 
		\mathbb{E}^{\mathbb{Q}}\bigg(\sum_{t=N+1}^{T} 
		\alpha(N)  \beta^t 
		\big(h(t)-\Gamma(t) 	\big) \bigg).
		\end{equation}							
		By the robust representation theorem (Theorem \ref{Thm:dynamic robust rep}), 
		$$\mathcal{E}\big(\; \cdot \; \big| \mathcal{F}_N \big) = 
		\esssup_{\mathbb{Q} \in 
			\mathcal{Q}}\mathbb{E}^\mathbb{Q}\big(\; \cdot \; \big| 
		\mathcal{F}_N\big).$$
		By Theorem \ref{expected recovering reward}, we know that
		$$\mathbb{E}^{\mathbb{Q}}\bigg(\sum_{t=N+1}^{\infty} \beta^t	\big(h(t)-\Gamma(t)\big) \bigg| \mathcal{F}_N \bigg) \leq	0.$$
		Since $\alpha$ is decreasing,
		$$\big(\alpha(N) - \alpha(N+1) \big)\mathbb{E}^{\mathbb{Q}}\bigg(\sum_{t=N+1}^{\infty} 
		\beta^t 
		\big(h(t) 
		- 
		\Gamma(t) 	\big) \bigg| \mathcal{F}_N \bigg) \leq 
		0.$$
		As $\alpha$ is predictable, by rearranging the above inequality, we obtain
		$$\mathbb{E}^{\mathbb{Q}}\bigg(\sum_{t=N+1}^{\infty} 
		\alpha(N)\beta^t 
		\big(h(t) 
		- 
		\Gamma(t) 	\big) \bigg| \mathcal{F}_N \bigg) \leq 
		\mathbb{E}^{\mathbb{Q}}\bigg(\sum_{t=N+1}^{\infty} 
		\; \alpha(N+1)\beta^t 
		\big(h(t) 
		- 
		\Gamma(t) 	\big) \bigg| \mathcal{F}_N \bigg).$$
		Hence, by the tower property,
		$$\mathbb{E}^{\mathbb{Q}}\bigg(\sum_{t=N+1}^{\infty} 
		\alpha(N)\beta^t 
		\big(h(t) 
		- 
		\Gamma(t) 	\big) \bigg) \leq 
		\mathbb{E}^{\mathbb{Q}}\bigg(\sum_{t=N+1}^{\infty} 
		\alpha(N+1)\beta^t 
		\big(h(t) 
		- 
		\Gamma(t) 	\big)  \bigg).$$
		By substituting this into \eqref{inequality for delay}, 
		we prove \eqref{induction alpha} with $N$ replaced by $N+1$ and done the induction step. 
		
		By using bounded convergence theorem, we can take $N \to \infty$ and obtain the required result.
	\end{proof}
\end{theorem}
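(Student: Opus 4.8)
The plan is to commit to a single near-optimal measure $\mathbb{Q}$ at the outset and then show that re-weighting the summands by a decreasing predictable $\alpha$ can only help. First I would invoke Proposition~\ref{fair game}, which gives $\mathcal{E}\big(\sum_{t=1}^T \beta^t(h(t)-\Gamma(t))\big) = 0$. Since $\mathcal{E}(\cdot) = \sup_{\mathbb{Q}\in\mathcal{Q}} \mathbb{E}^{\mathbb{Q}}(\cdot)$, for the given $\epsilon$ I can choose a fixed $\mathbb{Q} \in \mathcal{Q}$ with
\[
\mathbb{E}^{\mathbb{Q}}\bigg(\sum_{t=1}^T \beta^t\big(h(t)-\Gamma(t)\big)\bigg) \geq -\epsilon.
\]
This is exactly the assertion for the constant weight $\alpha \equiv 1$, and it is this single $\mathbb{Q}$ that I keep fixed for \emph{all} admissible $\alpha$.

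The crucial observation is that, because this $\mathbb{Q}$ is merely one element of $\mathcal{Q}$, the dynamic robust representation (Theorem~\ref{Thm:dynamic robust rep}) gives the pointwise domination $\mathbb{E}^{\mathbb{Q}}(\cdot \mid \mathcal{F}_N) \leq \mathcal{E}(\cdot \mid \mathcal{F}_N)$. Combining this with Theorem~\ref{expected recovering reward} yields, for the \emph{same} fixed $\mathbb{Q}$ and every $N$,
\[
\mathbb{E}^{\mathbb{Q}}\bigg(\sum_{t=N+1}^T \beta^t\big(h(t)-\Gamma(t)\big) \,\bigg|\, \mathcal{F}_N\bigg) \leq 0.
\]
That is, every conditional tail of the compensated cost is non-positive under $\mathbb{Q}$ itself, not just under the nonlinear envelope $\mathcal{E}$. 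This is precisely the property that makes a decreasing weight favourable.

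To exploit it I would argue by induction on a truncation of $\alpha$. Set $\alpha^N(t) := \alpha(t)$ for $t \leq N$ and $\alpha^N(t) := \alpha(N)$ for $t > N$, and claim $\mathbb{E}^{\mathbb{Q}}\big(\sum_{t=1}^T \alpha^N(t)\beta^t(h(t)-\Gamma(t))\big) \geq -\epsilon$ for all $N$. The base case $N=0$ follows from the constant-weight estimate above after pulling out the deterministic factor $\alpha(0) \in [0,1]$ (so the value is at least $-\alpha(0)\epsilon \geq -\epsilon$). For the inductive step, the only change from $\alpha^N$ to $\alpha^{N+1}$ is that the tail weight past $N$ drops from $\alpha(N)$ to $\alpha(N+1)$. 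Since $\alpha$ is decreasing, $\alpha(N)-\alpha(N+1) \geq 0$, and since $\alpha$ is predictable, $\alpha(N+1)$ is $\mathcal{F}_N$-measurable; multiplying the displayed non-positive conditional tail by $\alpha(N)-\alpha(N+1)\geq 0$ and applying the tower property shows that replacing $\alpha(N)$ by the smaller $\alpha(N+1)$ in the tail can only increase the $\mathbb{Q}$-expectation. Substituting this into the inductive hypothesis gives the claim for $N+1$, and taking $N=T$ (where $\alpha^T = \alpha$ on the relevant range) finishes the proof.

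I expect the main obstacle to be conceptual rather than computational: one must \emph{commit to a single} $\mathbb{Q}$ and then use the domination $\mathbb{E}^{\mathbb{Q}}(\cdot\mid\mathcal{F}_N)\le \mathcal{E}(\cdot\mid\mathcal{F}_N)$ to transfer the nonlinear tail estimate of Theorem~\ref{expected recovering reward} onto that fixed $\mathbb{Q}$. The temptation to re-optimize the measure at each horizon $N$ must be resisted, since the conclusion requires one $\mathbb{Q}$ that works uniformly over all $\alpha$; the argument hinges entirely on the chosen $\mathbb{Q}$ inheriting every conditional non-positivity bound for free. The subsequent summation-by-parts induction is then routine.
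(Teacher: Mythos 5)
Your proposal is correct and follows essentially the same argument as the paper: you fix a single near-optimal $\mathbb{Q}$ via Proposition \ref{fair game} and the robust representation, transfer the tail estimate of Theorem \ref{expected recovering reward} to that fixed $\mathbb{Q}$ through the domination $\mathbb{E}^{\mathbb{Q}}(\,\cdot\,|\,\mathcal{F}_N) \leq \mathcal{E}(\,\cdot\,|\,\mathcal{F}_N)$, and then run the identical truncation induction on $\alpha^N$, using that $\alpha(N)-\alpha(N+1)\geq 0$ is $\mathcal{F}_N$-measurable together with the tower property. Your handling of the base case (pulling out the deterministic factor $\alpha(0)\in[0,1]$) is in fact slightly more explicit than the paper's, but the proof is the same.
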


\section{Part B: Analysis of multiple bandits}
\label{Multiarm section}
We are now ready to consider the problem of choosing between multiple bandits.

In Definition \ref{admissible control}, we introduce our class of admissible control which can be considered in our dynamic allocation problems. This class of control introduces a few natural ways of parameterizing time. We 
therefore will use the following terminology to describe the evolution of time 
in different ways. This terminology will be useful in our discussion on the proof.
\begin{enumerate}
	\item \textbf{`Play'} refers to the \underline{total} number of (real) 
	times that we play the system of bandits. (This corresponds to the time 
	parameter of the simple form $\rho$ which is briefly discussed earlier on in 
	Remark \ref{Mandelbaum and simple seq} and later in Definition \ref{simple form}.)
	\item \textbf{`Trial'} refers to the number of times that we play a specific bandit. (This corresponds to the sum $\sum_{k=0}^{K}\tau^{(m)}_k$.)
	\item \textbf{`Decision'} refers to the number of times that we make a 
	decision between bandits. (This corresponds to the time parameter for the 
	choice process $\left(p_n\right)_{n \geq 0}$.)
	\item \textbf{`Run'} refers to the number of times that we have made the decision to select a specific bandit. (This corresponds to the time parameter of the allocation sequence $(\tau^{(m)}_k)_{k \geq 0}$ for each fixed $m \in \mathcal{M}$.)
\end{enumerate}
\begin{remark}
	The terms `play' and `trial' can be referred to without directly identifying 
	the time allocation sequence. On the other hand, the terms `decision' and 
	`run' need to be interpreted under a given time allocation sequence  $\tau$ (Definition \ref{time allocation sequences}).
\end{remark}
\begin{remark}
	The $m$th component of the recording sequence $\eta_n$ (Definition \ref{recording sequences}) represents the number 
	of runs in the $m$th
	bandit before the $n$th decision. 	We can see that the random variable $\eta_{n}$ takes values in 
	$\Big\{r \in \mathcal{S} : \sum_{m=1}^M r^{(m)} \leq n \Big\}$. 
\end{remark}

In order to prove C-optimality, we then consider the target function as briefly stated in \eqref{target for C-optimality}.
\begin{definition}
	\label{Gittins' target function}
	For each $m \in \mathcal{M}$, let 
	$\big(h^{(m)}(t)\big)_{t \geq 1}$ be the uniformly bounded non-negative cost process at the $t$th trial of the $m$th bandit with prevailing reward process $\big(\Gamma^{(m)}(t)\big)_{t \geq 1}$ (Definition \ref{prevailing process}). For an allocation strategy $(\tau,p)$, (Definition \ref{admissible control}), we define the \emph{Gittins' target function} by
	$$V({\tau}, p) := 
	\mathfrak{E}\bigg(\sum_{n=1}^{\infty} 
	\beta^{n} \big(h^{({\rho}_{n-1})}(t^{\rho}_n) - 
	\Gamma^{({\rho}_{n-1})}(t^{\rho}_n)\big) 
	\bigg)$$
	where $\rho$ is the simple form of $({\tau}, p)$ with corresponding counting processes $t^{\rho}_n := \sum_{k=0}^{n-1} \mathbb{I}(\rho_k = 
	\rho_{n-1})$, and $\mathfrak{E}$ is a partially consistent orthant nonlinear expectation, as in Definition \ref{orthant expectation}.
\end{definition}

\begin{remark}
	We can also write $V({\tau}, p)$ in terms of ${\tau}$ and $p$ directly without identifying the simple form ${\rho}$. This is done in the proof of Theorem \ref{negativity} in Step B.2. This definition, however, makes it clear that $V$ depends on $(\tau,p)$ only through its simple form.
\end{remark}

\subsection{Step B.1: Fubini theorem and Suboptimality}
In this subsection, we will show that considering generic stopping times and choice of bandits yields a non-negative expected loss. This can be shown using the robust representation result.

First, we recall the following corollary of Fubini's theorem.
\begin{corollary}
	\label{conditional expectation}
	Let  $(G,\mathcal{G}, \mathbb{P})$ and $(H, \mathcal{H}, \mathbb{Q})$ be 
	probability spaces. Let $\mathcal{G}'$ and $\mathcal{H}'$ be sub $\sigma$-algebras of 
	$\mathcal{G}$ and  $\mathcal{H}$ respectively, with $\mathcal{H}':= \{\emptyset, 
	H\}$. Then, for any integrable 
	random variable $X$ on $(G 
	\times H, \mathcal{G} \otimes \mathcal{H}, \mathbb{P} \otimes \mathbb{Q})$, 
	we have
	$$\mathbb{E}^{\mathbb{P} \otimes \mathbb{Q}} \Big(X \; \Big| \; 
	\mathcal{G}' 
	\otimes \mathcal{H}'\Big) = \mathbb{E}^{\mathbb{P}}\Big(\int_H X(\; 
	\cdot \;, h) 
	d\mathbb{Q}(h) \; \Big| \; 
	\mathcal{G}' 
	\Big) \;\;\; \mathbb{P} \otimes \mathbb{Q}\text{-a.s.}$$	
\end{corollary}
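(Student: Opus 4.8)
The plan is to verify directly that the right-hand side satisfies the two defining properties (measurability and averaging) of the conditional expectation on the left. Write $Y(\omega_G) := \int_H X(\omega_G, h)\, d\mathbb{Q}(h)$ for the function obtained by integrating out the $H$-coordinate. First I would invoke Fubini's theorem to guarantee that $Y$ is well defined $\mathbb{P}$-a.s., $\mathcal{G}$-measurable, and $\mathbb{P}$-integrable, so that $\mathbb{E}^{\mathbb{P}}(Y \mid \mathcal{G}')$ exists. I then regard $\mathbb{E}^{\mathbb{P}}(Y \mid \mathcal{G}')$ as a random variable on the product space $G \times H$ which happens to depend on $\omega_G$ only; the goal is to show it coincides $\mathbb{P}\otimes\mathbb{Q}$-a.s.\ with $\mathbb{E}^{\mathbb{P}\otimes\mathbb{Q}}(X \mid \mathcal{G}' \otimes \mathcal{H}')$.

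The key structural observation, which removes the need for any generating-class argument, is that since $\mathcal{H}' = \{\emptyset, H\}$ is trivial the product $\sigma$-algebra simplifies to $\mathcal{G}' \otimes \mathcal{H}' = \{A \times H : A \in \mathcal{G}'\}$. Thus $\mathbb{E}^{\mathbb{P}}(Y \mid \mathcal{G}')$, being $\mathcal{G}'$-measurable as a function of $\omega_G$ and constant in $h$, is indeed $\mathcal{G}' \otimes \mathcal{H}'$-measurable. It remains only to check the averaging identity $\int_C X\, d(\mathbb{P}\otimes\mathbb{Q}) = \int_C \mathbb{E}^{\mathbb{P}}(Y \mid \mathcal{G}')\, d(\mathbb{P}\otimes\mathbb{Q})$ for every $C \in \mathcal{G}' \otimes \mathcal{H}'$, that is, for every $C = A \times H$ with $A \in \mathcal{G}'$.

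For such $C$ I would compute both sides using Fubini again. On the left, $\int_{A\times H} X\, d(\mathbb{P}\otimes\mathbb{Q}) = \int_A \big(\int_H X(\omega_G, h)\, d\mathbb{Q}(h)\big)\, d\mathbb{P}(\omega_G) = \int_A Y\, d\mathbb{P}$. On the right, since the integrand does not depend on $h$ and $\mathbb{Q}(H) = 1$, we get $\int_{A \times H} \mathbb{E}^{\mathbb{P}}(Y \mid \mathcal{G}')\, d(\mathbb{P}\otimes\mathbb{Q}) = \int_A \mathbb{E}^{\mathbb{P}}(Y \mid \mathcal{G}')\, d\mathbb{P}$, which equals $\int_A Y\, d\mathbb{P}$ by the defining property of $\mathbb{E}^{\mathbb{P}}(Y \mid \mathcal{G}')$ applied to $A \in \mathcal{G}'$. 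The two sides thus coincide, and by uniqueness of conditional expectation the claimed equality holds $\mathbb{P}\otimes\mathbb{Q}$-a.s.

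This argument is almost entirely bookkeeping around Fubini's theorem; the only point requiring a moment's care is the reduction of the product $\sigma$-algebra to $\{A \times H : A \in \mathcal{G}'\}$, which is precisely what makes measurability of the candidate immediate and lets me bypass a $\pi$--$\lambda$ argument on rectangles. The integrability needed to define $Y$ and to swap the order of integration over the rectangle both follow from the assumed $\mathbb{P}\otimes\mathbb{Q}$-integrability of $X$, so I do not expect any genuine obstacle here.
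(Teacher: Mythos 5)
Your proof is correct. The paper gives no proof of this statement, simply recalling it as a corollary of Fubini's theorem, and your argument --- reducing $\mathcal{G}'\otimes\mathcal{H}'$ to $\{A\times H : A\in\mathcal{G}'\}$ and verifying the measurability and averaging properties of the candidate via Fubini --- is precisely the standard verification the authors have in mind.
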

\begin{theorem}
	\label{positivity}
	For any 
	allocation strategy $(\tau, p)$, we have
	$$V(\tau, p) \geq 0.$$
	
	\begin{proof}
		Let $\rho$ be the simple form of $(\tau, p)$. Write $R^{(m)}_t$ for the total number of trials on other bandits before making the $t$th trial on the $m$th bandit, i.e. 
		$$R^{(m)}_t := \sum_{k \neq m} \sum_{n=0}^{N^{(m)}_t} \mathbb{I}(\rho_n = k) \quad\text{where} \quad N^{(m)}_t := \inf\bigg\{N \geq 0 : \sum_{n=0}^N\mathbb{I}(\rho_n = m) = t \bigg\}.$$
		Since $R^{(m)}_t$ does not depend on future realizations of the $m$th bandit, $R^{(m)}_t$ is 
		$\mathcal{F}^{(m)}_{t-1} \otimes \big(\bigotimes_{k=1, k \neq m}^M 
		\mathcal{F}^{(k)}_{\infty}\big)$-measurable (taking the product in an appropriate order). 
		Moreover, as $N^{(m)}_t$ is increasing in $t$, it follows that $R^{(m)}_t$ is increasing in $t$.
		
		Now, fix $\epsilon > 0$. By Theorem \ref{delay and prevailing}, for each $m \in \mathcal{M}$, we can find 
		a probability measure $\mathbb{Q}^{(m)} \in \mathcal{Q}^{(m)}$ such 
		that, for every adapted decreasing process $(\alpha^{(m)}(t))_{t \geq 
			0}$ taking 
		values in $[0,1]$, we have
		\begin{equation}
		\label{alpha inequality}
		\mathbb{E}^{\mathbb{Q}^{(m)}}\bigg(\sum_{t=1}^{\infty} 
		\alpha^{(m)}(t)\beta^t \big(h^{(m)}(t) - \Gamma^{(m)}(t) \big) \bigg)  \geq -\epsilon.
		\end{equation}
		
		Define 
		$$\tilde{\alpha}^{(m)}(t) := \int_{\prod_{k \neq m}\Omega^{(k)}} 
		\; \beta^{R^{(m)}_t} \; d \Big(\bigotimes_{k \neq m}\mathbb{Q}^{(k)}\Big). $$
		By Fubini's theorem, as $R^{(m)}_t$ is 
		$\mathcal{F}^{(m)}_{t-1} \otimes \big(\bigotimes_{k=1, k \neq m}^M 
		\mathcal{F}^{(k)}_{\infty}\big)$-measurable and $\beta \in (0,1]$, the process $\big(\tilde{\alpha}^{(m)}(t)\big)$ is an 
		$\big(\mathcal{F}^{(m)}_t\big)$-predictable process taking values in 
		$[0,1]$.
		
		Moreover, $R^{(m)}_t$ is also
		$\mathcal{F}^{(m)}_{\infty} \otimes \big(\bigotimes_{k=1, k \neq m}^M 
		\mathcal{F}^{(k)}_{\infty} \big)$-measurable, so by Corollary \ref{conditional expectation} we can write
		\begin{eqnarray*}
			\tilde{\alpha}^{(m)}(t) & = & \mathbb{E}^{\bigotimes_{k=1}^M 
				\mathbb{Q}^{(k)}}\Big( \beta^{R^{(m)}_t} \; 
			\Big| 
			\tilde{\mathcal{F}}^{(m)}_{\infty} \Big) \quad \text{where} \;\;\; \tilde{\mathcal{F}}^{(m)}_{\infty}  :=  \mathcal{F}^{(m)}_{\infty} 
			\otimes \bigotimes_{k \neq m}^M 
			\mathcal{F}^{(k)}_{0} .
		\end{eqnarray*} 
		As $t \mapsto R^{(m)}_t$ is 
		increasing, it then follows that $\tilde{\alpha}^{(m)}(t) $ is decreasing in $t$. Hence, by Theorem \ref{delay and prevailing} we obtain \eqref{alpha inequality} with $\alpha^{(m)}$ replaced with $\tilde{\alpha}^{(m)}$.
		
		By the definition of $\mathfrak{E}$ and Fubini's theorem, it follows that
		\begin{eqnarray*}
			\lefteqn{\mathfrak{E}\bigg( \sum_{n=1}^{\infty} 
				\beta^{n} \left(h^{({\rho}_{n-1})}(t^{\rho}_n) - 
				\Gamma^{({\rho}_{n-1})}(t^{\rho}_n)\right) \bigg)}
			\\ & \geq &
			\mathbb{E}^{\bigotimes_{k=1}^M \mathbb{Q}^{(m)}}\bigg( 
			\sum_{n=1}^{\infty} 
			\beta^{n} \left(h^{({\rho}_{n-1})}(t^{\rho}_n) - 
			\Gamma^{({\rho}_{n-1})}(t^{\rho}_n)\right)\bigg)
			\\		& = & \mathbb{E}^{\bigotimes_{k=1}^M 
				\mathbb{Q}^{(m)}}\bigg(\sum_{m=1}^{M} 
			\sum_{t=1}^{\infty} 
			\beta^{R^{(m)}_t}\beta^t \left(h^{(m)}(t) - 
			\Gamma^{(m)}(t)\right) \bigg)
			\\		& = & \sum_{m=1}^{M} \mathbb{E}^{\bigotimes_{k=1}^M 
				\mathbb{Q}^{(m)}}\bigg(
			\sum_{t=1}^{\infty} 
			\mathbb{E}^{\bigotimes_{k=1}^M 
				\mathbb{Q}^{(m)}}\Big(\beta^{R^{(m)}_t}\beta^t \left(h^{(m)}(t) - 
			\Gamma^{(m)}(t)\right)
			\Big| \tilde{\mathcal{F}}^{(m)}_{\infty} \Big) \bigg) \\
			& = & \sum_{m=1}^{M} \mathbb{E}^{\bigotimes_{k=1}^M 
				\mathbb{Q}^{(m)}}\bigg(
			\sum_{t=1}^{\infty} 
			\tilde{\alpha}^{(m)}(t) \beta^t\left(h^{(m)}(t) - 
			\Gamma^{(m)}(t)\right)\bigg) \\
			& = & \sum_{m=1}^{M} \mathbb{E}^{\mathbb{Q}^{(m)}}\bigg(
			\sum_{t=1}^{\infty} 
			\tilde{\alpha}^{(m)}(t) \beta^t\left(h^{(m)}(t) - 
			\Gamma^{(m)}(t)\right)\bigg) \;\;\; \geq \;\;\; -M\epsilon.
		\end{eqnarray*}
		As $\epsilon$ is arbitrary, the result follows.
	\end{proof}
\end{theorem}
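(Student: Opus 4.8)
The plan is to leverage the product structure of the orthant expectation $\mathfrak{E}$ and reduce the multi-bandit statement to $M$ essentially independent applications of the single-bandit delay estimate, Theorem \ref{delay and prevailing}. Since $\mathfrak{E}(\cdot)=\sup_{\mathbb{Q}\in\mathcal{Q}}\mathbb{E}^{\mathbb{Q}}(\cdot)$ where $\mathcal{Q}$ consists of product measures $\bigotimes_{m}\mathbb{Q}^{(m)}$, it suffices to exhibit, for each $\epsilon>0$, one product measure under which the expected cost is at least $-M\epsilon$; letting $\epsilon\downarrow 0$ then yields $V(\tau,p)\ge 0$.

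First I would re-index the play-time sum by bandit and by trial. Writing $\rho$ for the simple form of $(\tau,p)$, the $t$th trial of bandit $m$ occurs at play-time $t+R^{(m)}_t$, where $R^{(m)}_t$ counts the plays spent on the other bandits before that trial, so $\beta^{n}$ becomes $\beta^{R^{(m)}_t}\beta^t$ and
\[\sum_{n=1}^{L}\beta^{n}\big(h^{(\rho_{n-1})}(t^{\rho}_n)-\Gamma^{(\rho_{n-1})}(t^{\rho}_n)\big)=\sum_{m=1}^{M}\sum_{t=1}^{T^{(m)}}\beta^{R^{(m)}_t}\beta^{t}\big(h^{(m)}(t)-\Gamma^{(m)}(t)\big).\]
The crucial structural observation is that the decision to play bandit $m$ for its $t$th trial can use bandit-$m$ information only up to its $(t-1)$th trial; hence $R^{(m)}_t$ is $\mathcal{F}^{(m)}_{t-1}\otimes\big(\bigotimes_{k\neq m}\mathcal{F}^{(k)}_{T^{(k)}}\big)$-measurable and is nondecreasing in $t$.

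Next, for fixed $\epsilon>0$ I would apply Theorem \ref{delay and prevailing} to each bandit separately, obtaining $\mathbb{Q}^{(m)}\in\mathcal{Q}^{(m)}$ that witnesses the $-\epsilon$ bound against every decreasing predictable weighting taking values in $[0,1]$. I then average out the other coordinates, setting $\tilde{\alpha}^{(m)}(t):=\int_{\prod_{k\neq m}\Omega^{(k)}}\beta^{R^{(m)}_t}\,d\big(\bigotimes_{k\neq m}\mathbb{Q}^{(k)}\big)$. By the measurability of $R^{(m)}_t$ and Fubini's theorem, $\tilde{\alpha}^{(m)}$ is $(\mathcal{F}^{(m)}_t)$-predictable and $[0,1]$-valued, and it is decreasing in $t$ because $\beta^{R^{(m)}_t}$ is; thus Theorem \ref{delay and prevailing} applies with $\alpha^{(m)}=\tilde{\alpha}^{(m)}$, giving a per-bandit lower bound of $-\epsilon$.

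Finally I would chain the estimates. Bounding $\mathfrak{E}$ below by the single product measure $\bigotimes_{m}\mathbb{Q}^{(m)}$ and invoking Corollary \ref{conditional expectation} to integrate out the coordinates $k\neq m$ — conditioning on $\tilde{\mathcal{F}}^{(m)}_{T^{(m)}}=\mathcal{F}^{(m)}_{T^{(m)}}\otimes\bigotimes_{k\neq m}\mathcal{F}^{(k)}_{0}$, which carries only bandit-$m$ information since each $\mathcal{F}^{(k)}_0$ is trivial — each summand $\mathbb{E}^{\bigotimes\mathbb{Q}}\big(\beta^{R^{(m)}_t}\beta^{t}(h^{(m)}(t)-\Gamma^{(m)}(t))\big)$ collapses to $\mathbb{E}^{\mathbb{Q}^{(m)}}\big(\tilde{\alpha}^{(m)}(t)\beta^{t}(h^{(m)}(t)-\Gamma^{(m)}(t))\big)$, because the factor $\beta^{t}(h^{(m)}(t)-\Gamma^{(m)}(t))$ is a function of bandit $m$ alone. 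Summing over $m$ and $t$ gives $V(\tau,p)\ge -M\epsilon$, and arbitrariness of $\epsilon$ finishes the proof. I expect the main obstacle to be the careful verification of the product-measurability and monotonicity of $R^{(m)}_t$: this is precisely where the controlled-filtration structure must be used, and it is what guarantees that $\tilde{\alpha}^{(m)}$ is an admissible delay process so that Theorem \ref{delay and prevailing} can be invoked; the Fubini factorization is then routine given Corollary \ref{conditional expectation}.
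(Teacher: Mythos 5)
Your proposal is correct and follows essentially the same route as the paper's own proof: the same re-indexing of the discounted sum via $R^{(m)}_t$, the same per-bandit application of Theorem \ref{delay and prevailing}, the same averaged delay process $\tilde{\alpha}^{(m)}$ constructed by Fubini, and the same collapse of the product-measure expectation via Corollary \ref{conditional expectation}. Nothing substantive is missing.
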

\subsection{Step B.2: Optimality}
In this subsection, we will show that the strategy determined by a particular time allocation sequence yields a zero expected cost in the Gittins' target function. 
\begin{theorem}
	\label{negativity}
	For each  $m \in \mathcal{M}$, let $(\sigma^{(m)}_k)_{k \geq 0}$ be the 
	sequence of running maximum random times associated to the $m$th bandit, as 
	defined in Definition \ref{optimal sequence}, i.e. we define 
	$\sigma^{(m)}_k $ and $\hat{S}^{(m)}_k$ recursively by
	$S^{(m)}_{k} := \sum_{l=0}^{k-1} 
	\sigma^{(m)}_{l}$ and
	\begin{equation*}
	\sigma^{(m)}_{k} := 
	\inf\left\{\theta \geq 1 : \gamma^{(m)}(S^{(m)}_{k} + \theta) 
	>  
	\gamma^{(m)}(S^{(m)}_{k}) \right\}.
	\end{equation*} 
	Then for any allocation 
	strategy of the form $(\sigma, p)$, we have
	$$V(\sigma, p) \leq 0.$$
	\begin{proof}
		Recall the recording sequence $\eta_n$ associated with
		$(\sigma, p)$.  	
		We define the following notation, given an allocation sequence $\sigma$.
		\begin{enumerate}[$\bullet$]
			\item  $\tilde{\Theta}_n$ denotes the total number of plays of the 
			system 
			before making the $n$th decision. i.e.
			$$\tilde{\Theta}_n := \sum_{m=1}^{M} \sum_{i=0}^{\eta_n^{(m)}-1} \sigma^{(m)}_i.$$
			\item $\tilde{\sigma}_n$ denotes the duration we decide to play following the $n$th 
			decision time, i.e.
			$$\tilde{\sigma}_n := \sigma^{(m)}_k \qquad	\text{on the event } \{p_n = m, \eta^{(m)}_n = k\}. $$
			N.B. $p_n = m$ means that we decide to play the $m$th bandit at the $n$th decision. The event $\eta^{(m)}_n = k$ means that we have had $k$ runs of the $m$th bandit before the $n$th decision. Thus, we choose to make $\sigma^{(m)}_k$ more trials on this bandit before making another decision.
			\item $\tilde{\Psi}^{(m)}_n$ denotes the total number of trials on the $m$th bandit before making the $n$th decision, i.e. 
			$$\tilde{\Psi}^{(m)}_n := \sum_{i=0}^{k-1} \sigma^{(m)}_i  \qquad
			\text{on the event } \{\eta^{(m)}_n = k\}. $$
		\end{enumerate}
		Using this notation, we can define a variation on the Gittins' target fuction, with the restriction that we consider only the first $N$ plays of the system, that is,
		$$V(N,\sigma, p) := \mathfrak{E}\bigg(\sum_{n=0}^{N-1} 
		\beta^{\tilde{\Theta}_n} \bigg(\sum_{l=1}^{\tilde{\sigma}_n}\beta^l
		\left(h^{(p_n)}(\tilde{\Psi}^{(p_n)}_n + l) - 
		\Gamma^{(p_n)}(\tilde{\Psi}^{(p_n)}_n + l)\right)\bigg) 
		\bigg)$$
		with the convention $h^{(0)}(t) = \Gamma^{(0)}(t) = 0$ for all $t$.
		
		By considering the simple form $\rho$ of the strategy $(\sigma,p)$ and applying Lebesgue property of $\mathfrak{E}$, we can show that $V$ agrees with Definition \ref{Gittins' target function} as $N \to \infty$, that is,
		$$\lim_{N \to \infty} V(N,\sigma, p) = V(\sigma, p).$$
		Hence, it suffices to show that 
		$V(N,\sigma, p) \leq 0$ for all $N \in \mathbb{N}$. This will be proved by 
		induction.
		
		It is clear that $V(0, \sigma, p) = 0$. Fix $N \in \mathbb{N}$ and assume that $V(N,\sigma, p) \leq 0$. To show that $V(N+1, \sigma, p) \leq 0$, by subadditivity, it suffices to show that
		$$\mathfrak{E}\bigg(\beta^{\tilde{\Theta}_N} \bigg(\sum_{l=1}^{\tilde{\sigma}_N}\beta^l
		\left(h^{(p_N)}(\tilde{\Psi}^{(p_N)}_N + l) - 
		\Gamma^{(p_N)}(\tilde{\Psi}^{(p_N)}_N + l)\right)\bigg) 
		\bigg)	\leq 0.$$
		
		Define the following random variables as in \eqref{decision filtration}:
		\begin{equation}\label{eq:PsiThetadefn} 
		\Psi_r^{(m)} := \sum_{i=0}^{r^{(m)}  - 1} \sigma^{(m)}_i \qquad \text{and} 
		\qquad
		\Theta_r := \sum_{m=1}^{M}\Psi_r^{(m)}
		\end{equation}
		
		for
		\[r \in \underline{\mathcal{S}}_N := \left\{r \in \mathcal{S} : \sum_{m=1}^M r^{(m)} 
		\leq N,\; r^{(m)} \leq T^{(m)} \right\}.\] 
		Note that  $\Psi_r^{(m)} = \tilde{\Psi}_N^{(m)}$ and $\Theta_r = \tilde{\Theta}_N$ on the event $\{\eta_N = r\}$. From the definition of the recording sequence (Definition \ref{recording sequences}), it 
		follows that $\Theta_r$ and the event 
		$A_N^{(r,m)} := \{\eta_N = r, p_N = m\}$ are both $\mathcal{F}(\Psi_r)$-measurable.
		
		On an event $A_N^{(r,m)}$, $\tilde{\Psi}_N^{(m)} = \Psi_r^{(m)}$ and $\Psi_r^{(m)}$ is a stopping time with respect to the filtration $(\mathcal{F}^{(m)}_t)_{t \geq 0}$. By considering the optimality obtained in Theorem \ref{optimal 
			stopping}, we can show that
		\begin{align*}
		&\mathfrak{E}\bigg(\beta^{\Theta_N} 
		\bigg(\sum_{l=1}^{\tilde{\sigma}_N}\beta^l
		\left(h^{(p_N)}(\Psi^{(m)}_N + l) - 
		\Gamma^{(p_N)}(\Psi^{(m)}_N + l)\right)\bigg) \bigg)\\ 
		& =  \mathfrak{E}\bigg(\sum_{m=1}^{M} \sum_{r \in \underline{\mathcal{S}}_N} 
		\mathbb{I}_{A_N^{(r,m)}}\beta^{\Theta_r} 
		\bigg(\sum_{l=1}^{\sigma_r(m)}\beta^l
		\left(h^{(m)}(\Psi^{(m)}_r + l) - 
		\Gamma^{(m)}(\Psi^{(m)}_r + l)\right)\bigg) 
		\bigg) \\
		& =  \mathfrak{E}\bigg(\sum_{m=1}^{M} \sum_{r \in \underline{\mathcal{S}}_N} 
		\mathbb{I}_{A_N^{(r,m)}}\beta^{\Theta_r} 
		\bigg(\sum_{l=1}^{\sigma_r(m)}\beta^l
		\left(h^{(m)}(\Psi^{(m)}_r + l) - 
		\gamma^{(m)}(\Psi^{(m)}_r)\right)\bigg) 
		\bigg) \\
		& \leq  \sum_{m=1}^{M} \sum_{r \in \underline{\mathcal{S}}_N} \mathfrak{E}\bigg(
		\mathbb{I}_{A_N^{(r,m)}}\beta^{\Theta_r} 
		\bigg(\sum_{l=1}^{\sigma_r(m)}\beta^l
		\left(h^{(m)}(\Psi^{(m)}_r + l) - 
		\gamma^{(m)}(\Psi^{(m)}_r)\right)\bigg) 
		\bigg)
		\\
		& \leq   \sum_{m=1}^{M} \sum_{r \in \underline{\mathcal{S}}_N} 
		\mathfrak{E}\bigg(
		\mathbb{I}_{A_N^{(r,m)}}\beta^{\Theta_r} 
		\mathfrak{E}_{\Psi_r}\bigg(\sum_{l=1}^{\sigma_r(m)}\beta^l
		\left(h^{(m)}(\Psi^{(m)}_r + l) - 
		\gamma^{(m)}(\Psi^{(m)}_r)\right) 
		\bigg)\bigg)  \\
		& =   \sum_{m=1}^{M} \sum_{r \in \underline{\mathcal{S}}_N} 
		\mathfrak{E}\bigg(
		\mathbb{I}_{A_N^{(r,m)}}\beta^{\Theta_r} 
		\mathcal{E}^{(m)}\bigg(\sum_{l=1}^{\sigma_r(m)}\beta^l
		\left(h^{(m)}(\Psi^{(m)}_r + l) - 
		\gamma^{(m)}(\Psi^{(m)}_r) \right) \bigg| \mathcal{F}^{(m)}_{\Psi^{(m)}_r}
		\bigg)\bigg) \\
		& =   \sum_{m=1}^{M} \sum_{r \in \underline{\mathcal{S}}_N} 
		\mathfrak{E}\left(0\right) \;\;\; = \;\; 0.
		\end{align*}
		We see that $V(N+1, \sigma, p)\leq 0$, and the desired result follows by induction.
	\end{proof}
\end{theorem}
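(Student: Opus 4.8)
The plan is to prove the inequality by induction on the number of \emph{decisions} rather than on the number of plays, exploiting the fact that the running-maximum allocation $\sigma$ organizes the plays into \emph{runs}, each a maximal block during which a single bandit $m$ is played for exactly $\sigma^{(m)}_k$ consecutive trials. The key structural observation I would record first is that, because $\sigma^{(m)}_k = \sigma(\hat{S}^{(m)}_k, \gamma^{(m)}(\hat{S}^{(m)}_k))$ is precisely the gap between consecutive new maxima of the Gittins index process, the prevailing reward $\Gamma^{(m)}$ is \emph{constant} throughout such a run and equals $\gamma^{(m)}(\hat{S}^{(m)}_k) = \gamma^{(m)}(\Psi^{(m)}_r)$. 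This lets me replace the global, path-dependent reward $\Gamma^{(m)}$ by the locally constant reward $\gamma^{(m)}(\Psi^{(m)}_r)$ on each run, which is what ultimately makes Theorem \ref{optimal stopping} applicable run-by-run.

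Concretely, I would define a truncated value function $V(N, \sigma, p)$ accounting only for the first $N$ decisions, observe that $V(0, \sigma, p) = 0$ and that $V(N, \sigma, p) = V(\sigma, p)$ once $N \geq L$, and then establish $V(N, \sigma, p) \leq 0$ by induction on $N$. For the inductive step, subadditivity of $\mathfrak{E}$ reduces matters to bounding the contribution of the $(N+1)$th decision alone, namely
\[
\mathfrak{E}\bigg(\beta^{\tilde{\Theta}_N}\sum_{l=1}^{\tilde{\sigma}_N}\beta^l\big(h^{(p_N)}(\tilde{\Psi}^{(p_N)}_N + l) - \Gamma^{(p_N)}(\tilde{\Psi}^{(p_N)}_N + l)\big)\bigg) \leq 0.
\]
I would decompose this over the events $A_N^{(r,m)} = \{\eta_N = r,\, p_N = m\}$, which (by the construction of the recording sequence, Definition \ref{recording sequences}) are $\mathcal{F}(\Psi_r)$-measurable, and note that the cumulative play count $\Theta_r$ is $\mathcal{F}(\Psi_r)$-measurable as well, so that $\mathbb{I}_{A_N^{(r,m)}}\beta^{\Theta_r}$ is a nonnegative $\mathcal{F}(\Psi_r)$-measurable weight.

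With the $\Gamma \to \gamma$ replacement in place, the remaining steps are: apply subadditivity to split the sum over $(r,m)$ outside $\mathfrak{E}$; invoke sub-consistency (Proposition \ref{property of joining expectation}(ii)) to insert $\mathfrak{E}_{\Psi_r}$ while pushing the nonnegative factor $\mathbb{I}_{A_N^{(r,m)}}\beta^{\Theta_r}$ through; and finally use the marginal projection property (Proposition \ref{property of joining expectation}(iv)) to collapse $\mathfrak{E}_{\Psi_r}$, acting on a quantity depending only on bandit $m$'s outcomes after $\Psi^{(m)}_r$, to the single-bandit operator $\mathcal{E}^{(m)}(\,\cdot\mid\mathcal{F}^{(m)}_{\Psi^{(m)}_r})$. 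Since $\sigma^{(m)}_k$ is an optimal stopping time for the reward $\gamma^{(m)}(\hat{S}^{(m)}_k)$, Theorem \ref{optimal stopping} gives that this single-bandit expectation is exactly zero, so every term vanishes and the inductive step closes. It is worth emphasizing that the inequality produced by sub-consistency points in the favorable direction (toward $\leq 0$), which is precisely why $\sigma$, as opposed to a generic allocation as in Theorem \ref{positivity}, yields an upper bound of zero.

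The main obstacle I expect is the measurability bookkeeping rather than any analytic subtlety: one must verify carefully that $A_N^{(r,m)}$ and $\Theta_r$ are $\mathcal{F}(\Psi_r)$-measurable (so the weights are known at the orthant stopping time $\Psi_r$ and can be carried through sub-consistency), and that the single-run loss genuinely depends only on bandit $m$, through $h^{(m)}$, $\gamma^{(m)}$ and the $(\mathcal{F}^{(m)}_t)$-stopping time $\sigma^{(m)}_k$, so that the marginal projection legitimately reduces the orthant expectation to the marginal one. Both facts follow from the recording-sequence construction and the product structure of $\mathfrak{E}$, but keeping the indices consistent across the play, trial, decision and run parameterizations is the delicate part of the argument.
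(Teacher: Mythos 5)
Your proposal is correct and follows essentially the same route as the paper's own proof: the same truncated value function indexed by decisions, the same induction with subadditivity reducing to a single run, the same decomposition over the events $A_N^{(r,m)}$ with the $\mathcal{F}(\Psi_r)$-measurable weight $\mathbb{I}_{A_N^{(r,m)}}\beta^{\Theta_r}$, the same replacement of $\Gamma^{(m)}$ by the locally constant $\gamma^{(m)}(\Psi^{(m)}_r)$ on each run, and the same closing chain of sub-consistency, positive homogeneity, marginal projection and Theorem \ref{optimal stopping}. Your upfront articulation of why $\Gamma^{(m)}$ is constant and equal to $\gamma^{(m)}(\Psi^{(m)}_r)$ throughout a run is the one point the paper leaves implicit in its chain of equalities, and making it explicit is a genuine improvement in exposition, not a different argument.
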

\subsection{Step B.3: C-optimality}
In the previous subsections, we introduced an allocation problem when the prevailing process is offered as compensation (Definition \ref{Gittins' target function}). We also proved that the optimal value can be achieved by choosing a proper family of allocation time sequences (i.e. $\sigma$ as in Theorem \ref{negativity}).

The prevailing reward process $\Gamma$ for each bandit is non-decreasing, and the optimal allocation sequences $\sigma$ require us to make a new decision whenever the process $\Gamma$ increases. By exploiting this fact, together with the discount effect, we will see that it is preferable to play the bandit with the lowest value of $\Gamma$ first. In particular, we can establish the Robust Gittins index theorem, which we repeat for convenience of the reader.
\begin{theorem}[Theorem \ref{robust Gittins}: Robust Gittins theorem]
\label{proof: robust Gittins}
	Suppose that for each $m \in \mathcal{M}$, $(\mathcal{F}^{(m)}_t)_{t \geq 0}$ is generated by some underlying process $(\xi^{(m)}_t)_{t \geq 1}$. Let $\psi^{(m)}_n$ be the total number of trials of the $m$th bandit before the $n$th play of the system. i.e.  $\psi^{(m)}_n := 
	\sum_{k=0}^{n-1}\mathbb{I}(\rho^*_k = m)$ (given an allocation strategy $\rho^*$ up to time $n-1$). 
		
	Then the allocation strategy 
	$\rho^*$ given (recursively) by $$\rho^*_n := \min \Big\{m \in \mathcal{M} \;\; : \;\; m \in \argmin_{k} 
	\gamma^{(k)}(\psi^{(k)}_n) \Big\}$$
	is C-optimal (Definition \ref{C-optimal}) under $\mathfrak{E}$ for the cost $$g^\rho(n) = \beta^n h^{(\rho_{n-1})}(t^{\rho}_n) \quad \text{where} \quad t^{\rho}_n = \sum_{k=0}^{n-1}\mathbb{I}(\rho_k = \rho_{n-1}).$$
\begin{proof}
	Recall the definition of $\Psi^{(m)}_r$ in \eqref{decision filtration}. We can see that $\Psi_r$ determines the orthant filtration when $\tilde{\eta}_{n} = r$ where $(\tilde{\eta}_{n})$ is a recording sequence constructed from the time allocation sequence $\sigma$, i.e. when the $m$th bandit was run for $r^{(m)}$ times under the (optimal) allocation sequence $\sigma$. In particular, $\Psi^{(m)}_r$ corresponds to the number of trials on the $m$th bandit. 
	
	To explicitly define our choice sequence, we set
	$$p^*_n := \min \left\{m \in \mathcal{M} \;\; : \;\; m \in \argmin_{k} 
	\gamma^{(k)}(\Psi^{(k)}_r) \right\}  \;\;\; \text{on the event} \;\;\; \{ \tilde{\eta}_n = r\}.$$		
	As $\Psi^{(m)}_r$ is an $(\mathcal{F}^{(m)}_t)$-stopping time, $\gamma^{(m)}(\Psi^{(m)}_r)$ is well-defined and is $\mathcal{F}(\Psi^{(m)}_r)$-measurable. It also follows that
	\begin{align*}
	\Big(\{ \tilde{\eta}_n = r\}\cap\{p^*_n = m\}\Big) & = \{ \tilde{\eta}_n = r\}\cap \bigcap_{k=1}^m\left\{ \gamma^{(m)}(\Psi^{(m)}_r) < \gamma^{(k)}(\Psi^{(k)}_r)  \right\}\\
	&\qquad \cap \bigcap_{k=1}^M\left\{\gamma^{(k)}(\gamma^{(m)}(\Psi^{(m)}_r) \leq \Psi^{(k)}_r) \right\}\\
	&\in \mathcal{F}(\Psi_r)
	\end{align*}
	
	Hence, $p^*$ is a choice sequence for the allocation sequence $\sigma$ (Definition \ref{recording sequences}). Therefore, $(\sigma, p^*)$ is an admissible allocation strategy. 
	Moreover, observe that $\rho^*$ given in the statement of 
	this theorem is the simple form of the allocation strategy $(\sigma,p^*)$. 
	
	By Theorem \ref{negativity} and Theorem \ref{positivity},
	$$\mathfrak{E}\bigg(\sum_{n=1}^{\infty} 
	\beta^{n} \left(h^{(\rho^{*}_{n-1})}(t^*_n) - 
	\Gamma^{(\rho^*_{n-1})}(t^*_n)\right) 
	\bigg) = 0  \;\;\; \text{for} \;\;\; t^*_n := \sum_{k=0}^{n-1} 
	\mathbb{I}(\rho^*_k = 
	\rho^*_{n-1}). $$
	Theorem \ref{positivity} also implies that for any allocation strategy 
	$(\tau,p)$ (and thus for any simple form $\rho$),
	$$\mathfrak{E}\bigg(\sum_{n=1}^{\infty} 
	\beta^{n} \left(h^{(\rho_{n-1})}(t^{\rho}_n) - 
	\Gamma^{(\rho_{n-1})}(t^{\rho}_n)\right) 
	\bigg) \geq 0. $$
	
	Next, we will show that $n \mapsto \beta^n\Gamma^{(\rho_{n-1})}(t^{\rho}_n)$ is predictable with respect to our observed filtration. We recall that  $$\Gamma^{(m)}(t) = \max_{0 \leq \theta \leq t-1} \gamma(\theta) \;\;\; : \;\;\; t= 1, 2, ....$$		
	Now, observe that $t^{\rho}_n = \sum_{k=0}^{n-1} \mathbb{I}(\rho_k = \rho_{n-1}) = 1 + \eta^{(\rho_{n-1})}_{n-1}$ where $(\eta_n)$ is a recording sequence corresponding to a strategy $(\mathfrak{1}, \rho)$ and hence,
	
	$$\Gamma^{(\rho_{n-1})}(t^{\rho}_n) = \sum_{r \in {\mathcal{S}}_{n-1}}\mathbb{I}(\eta_{n-1} = r)\mathbb{I}(\rho_{n-1} = m)\Gamma^{(m)}(1 + r^{(m)})$$
	where ${\mathcal{S}}_N := \Big\{r \in \mathcal{S} \; : \; \sum_{m=1}^{M}r^{(m)} = N \Big\}$.
	
	Since $\Gamma^{(m)}(1 + r^{(m)})$ is $\mathcal{F}^{(m)}_{r^{(m)}}$-measurable, by the Doob--Dynkin lemma, there exists a measurable function $f^{(m)}_r: \mathbb{R}^{r^{(m)}} \to \mathbb{R}$ such that
	\begin{equation} 
	\label{Doob-Dynkin for Gamma}
	\Gamma^{(m)}(1 + r^{(m)}) = f^{(m)}_r(\xi^{(m)}_1, ..., \xi^{(m)}_{r^{(m)}}).
	\end{equation}
	Note that, while the process $\big(\xi^{(m)}_t\big)$ is defined on $(\Omega^{(m)}, \mathcal{F}^{(m)})$, we can extend it to $(\bar{\Omega}, \bar{\mathcal{F}})$ by considering an appropriate embedding.
	
	By substituting into \eqref{Doob-Dynkin for Gamma} in a similar way to \eqref{arranging function for filtration}, we can write $\Gamma^{(\rho_{n-1})}(t^{\rho}_n)$ as a (measurable) function of $\left( \left( \eta_k\right)_{0 \leq k \leq n-1}, \left( \rho_k\right)_{0 \leq k \leq n-1},  \left( \xi^\rho_k\right)_{0 \leq k \leq n-1} \right)$. By Definition \ref{recording sequences} and Remark \ref{observed vs decision filtration}, $\left( \eta_k\right)$ and $\left( \rho_k\right)$ are adapted to the observed filtration $\left(\mathcal{H}^\rho_n\right)_{n\ge 0}$. It follows that $\Gamma^{(\rho_{n-1})}(t^{\rho}_n)$ is $\mathcal{H}^\rho_{n-1}$-measurable. 
	
	Therefore, for each $\rho$, $C_0^\rho(n) := \Gamma^{(\rho_{n-1})}(t^\rho_n)$ defines a subcompensator at time $N = 0$, and for $\rho = \rho^*$, the cost is fully compensated with $V_0 = 0$.
	
		To construct a compensator for a subsequent time $N$, we consider `restarting' our system at an orthant time $r= (r^{(1)},...,r^{(M)}) \in {\mathcal{S}}_N \subseteq \mathcal{S}$ (as in Theorem \ref{equal filtration}). As $\mathcal{F}(r)$ describes the information from all bandits, this needs to be done carefully. Each of our single-bandit filtrations $(\mathcal{F}^{(m)}_t)_{t\ge 0}$ is generated by a discrete-time real-valued process, and $\mathcal{F}(r) = \bigotimes_{m}\mathcal{F}^{(m)}_{r^{(m)}}$, so the Doob--Dynkin lemma states that any $\mathcal{F}(r)$-measurable random variable can be written as a Borel function of the first $r$ observations. For concreteness, we denote these observations $\omega_r$.

	We proceed by freezing the value of $\omega_r$ and $\omega_u$ with $u \leq r$. Let $(\rho^{*, \omega_r}_{n})_{n\geq N}$ denote the minimum-Gittins-index strategy given by $\rho^{*}$ defined in the theorem when we restart our analysis at $r$ from a given $\omega_r$. We do not change the Gittins indices $\gamma$ when we fix $\omega_r$, so the corresponding $\Gamma$ processes satisfy
	$$\Gamma_{\omega_r}^{(m)}(t) = \max_{r^{(m)} \leq \theta \leq t-1} \gamma^{(m)}(\theta)  \leq \max_{u^{(m)} \leq \theta \leq t-1} \gamma^{(m)}(\theta) = \Gamma_{\omega_u}^{(m)}(t)  \quad \text{for all} \;\; u \leq r$$
	and are measurable with respect to $\omega_r$. As discussed in Remark \ref{time-consistency remark}, the optimal strategy $(\rho^{*,\omega_r}_n)_{n \geq N}$ coincides with the strategy $\rho^*$ (and is therefore also measurable with respect to $\omega_r$). 
	
	By repeating our earlier analysis, we see that, for each $\omega_r$, $ \Gamma_{\omega_r}^{(\rho^{*,\omega_r}_{n-1})}\leq  \Gamma_{\omega_u}^{(\rho^{*,\omega_r}_{n-1})}$. We can now unfreeze $\omega_r$ and $\omega_u$ and, summing over all possible scenarios, show that
	$$C^\rho_N(n) := \beta^n \max_{{\eta}^{(\rho_{n-1})}_N \leq \theta \leq t^\rho_n - 1} \gamma^{(\rho_n)}(\theta)$$
	is decreasing in $N$. By applying the same argument as earlier, we also have $n \mapsto C^\rho_N(n)$ is $\mathcal{H}^\rho_n$-predictable. Therefore, $(C^\rho_N(n))$ defines a subcompensator for strategy $\rho$ and fully compensates for $\rho = \rho^*$. We set $V_N = 0$ and observe \eqref{eq: compensation} is satisfied.
	
	Finally, as $t \mapsto \Gamma_{\omega_r}^{(m)}(t)$ is increasing for all $m$ and all $\omega_r$, and $\rho^*$ is a strategy where the lowest $\Gamma$ is chosen first, it 
	follows that for all $1 \leq N \leq \infty$, if $\rho \sim_N \rho^*$, then
	\begin{equation}
	\label{dynamic optimal}
	\sum_{n=N+1}^{L} 
	C^{\rho^*}_N(n) \leq \sum_{n=N+1}^{L} 
	C^{\rho}_N(n)  \qquad \text{for all } \; L \geq N+1.
	\end{equation}		
In particular, \eqref{C-optimal equation} is satisfied and therefore $\rho^*$ is C-optimal.
\end{proof}
\end{theorem}
\section{Proof of Other Relevant Results}
\label{Append: additional result}

\begin{definition}[F\"ollmer and Schied \cite{stoc_fin}]
	Let $(\Omega, \mathcal{G}, \mathbb{P})$ be a probability space and let $\mathcal{Y}$ be a family of $\mathcal{G}$-measurable random 
	variable. We say $Z$ is a $\mathcal{G}$-\emph{essential infimum} of 
	$\mathcal{Y}$ denoted by $Z = \mathcal{G}\text{-}\essinf Y$ if 
	\begin{enumerate}[(i)]
		\item $Z$ is $\mathcal{G}$-measurable.
		\item $Z \leq Y$ $\; \mathbb{P}-a.s.$ for all $Y \in \mathcal{Y}$.
		\item For $Z'$ such that $Z' \leq Y$ $\; 
		\mathbb{P}-a.s.$ for all $Y \in \mathcal{Y}$, we must have $Z' \leq Z$ 
		$\mathbb{P}-a.s.$.
	\end{enumerate}
	We also define a similar notion for $\mathcal{G}$-\emph{essential supremum}. We may omit $\mathcal{G}$ in front of $\essinf$ if the measurability of the family is obvious.
\end{definition}
\begin{theorem}[Existence of Essential infimum]
	\label{existence of essinf}
	The $\mathcal{G}$-essential infimum exists. 
	
	Suppose in addition that $\mathcal{Y}$ is directed downwards, that is for $Y,Y' \in \mathcal{Y}$, there exists $\tilde{Y} \in \mathcal{Y}$ such that ${\tilde{Y}} \leq \min(Y, Y')$. Then there exists a decreasing sequence $\left(Y_n \right)_{n \in \mathbb{N}} \subseteq \mathcal{Y} $  such that $Y_n \searrow \essinf Y$ $\; \mathbb{P}$-a.s. 
	
	The similar result also holds for $\mathcal{G}$-essential supremum.
	\begin{proof}
		See Theorem 1.3.40. in \cite{Stoc_Cal} or Theorem A.37 in \cite{stoc_fin}.
	\end{proof}
\end{theorem}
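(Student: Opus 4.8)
The plan is to carry out the classical construction of the essential infimum through a minimizing sequence of expectations, after reducing to a bounded, downward-directed family so that monotone limits are controllable.

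First I would reduce to the bounded case. Fix a strictly increasing continuous bijection $\phi:[-\infty,\infty]\to[0,1]$ (for instance a rescaled $\arctan$). Since $\essinf$ commutes with a strictly increasing continuous transformation, it suffices to construct the essential infimum of $\{\phi(Y):Y\in\mathcal{Y}\}$, so without loss of generality every element of $\mathcal{Y}$ takes values in $[0,1]$. I would then close the family under finite minima, setting $\mathcal{Y}^\wedge:=\{\min(Y_1,\dots,Y_n):n\in\mathbb{N},\ Y_i\in\mathcal{Y}\}$. A $\mathcal{G}$-measurable $Z$ satisfies $Z\le Y$ a.s.\ for every $Y\in\mathcal{Y}$ if and only if $Z\le Y'$ a.s.\ for every $Y'\in\mathcal{Y}^\wedge$; hence $\mathcal{Y}$ and $\mathcal{Y}^\wedge$ have precisely the same lower bounds and therefore the same essential infimum, and $\mathcal{Y}^\wedge$ is downward directed by construction.

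Next I would build the candidate. Set $\alpha:=\inf_{Y'\in\mathcal{Y}^\wedge}\mathbb{E}[Y']\in[0,1]$ and pick $Y'_n\in\mathcal{Y}^\wedge$ with $\mathbb{E}[Y'_n]\to\alpha$. Replacing $Y'_n$ by $\min(Y'_1,\dots,Y'_n)\in\mathcal{Y}^\wedge$ I may assume $(Y'_n)$ is decreasing, and $\mathbb{E}[Y'_n]$ still tends to $\alpha$ since $\alpha\le\mathbb{E}[\min(Y'_1,\dots,Y'_n)]\le\mathbb{E}[Y'_n]$. Define $Z:=\inf_n Y'_n=\lim_n Y'_n$, which is $\mathcal{G}$-measurable and, by dominated convergence, satisfies $\mathbb{E}[Z]=\alpha$. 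Property (i) is immediate, and property (iii) follows because any common lower bound $Z'$ satisfies $Z'\le Y'_n$ a.s.\ for all $n$ and hence $Z'\le Z$ a.s.

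The main step is property (ii): that $Z$ is genuinely a lower bound for the whole (possibly uncountable) family. For an arbitrary $Y'\in\mathcal{Y}^\wedge$ the variables $\min(Y'_n,Y')$ again lie in $\mathcal{Y}^\wedge$, so $\mathbb{E}[\min(Y'_n,Y')]\ge\alpha$; letting $n\to\infty$ and using dominated convergence gives $\mathbb{E}[\min(Z,Y')]\ge\alpha=\mathbb{E}[Z]$, while $\min(Z,Y')\le Z$ forces $\min(Z,Y')=Z$ a.s., i.e.\ $Z\le Y'$ a.s. As $\mathcal{Y}\subseteq\mathcal{Y}^\wedge$, this yields (ii), so $Z=\essinf\mathcal{Y}$ after transforming back by $\phi^{-1}$. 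The hard part is exactly this passage from ``minimizing along a countable sequence'' to ``lower bound for the entire family''; the expectation-minimisation together with closure under finite minima is precisely what delivers it. Finally, for the additional claim, under the same reduction I would choose the minimizing sequence inside $\mathcal{Y}$ directly: pick $Z_n\in\mathcal{Y}$ with $\mathbb{E}[Z_n]\to\alpha$ and, using downward directedness, inductively select $\tilde Z_n\in\mathcal{Y}$ with $\tilde Z_n\le\min(\tilde Z_{n-1},Z_n)$, so that $(\tilde Z_n)$ is decreasing, lies in $\mathcal{Y}$, and $\mathbb{E}[\tilde Z_n]\to\alpha$. Since $Z=\essinf\mathcal{Y}$ is a lower bound we have $\tilde Z_n\ge Z$, so the decreasing limit $\tilde Z_\infty:=\inf_n\tilde Z_n$ satisfies $\tilde Z_\infty\ge Z$ and $\mathbb{E}[\tilde Z_\infty]=\alpha=\mathbb{E}[Z]$, whence $\tilde Z_\infty=Z$ a.s., giving $\tilde Z_n\searrow\essinf\mathcal{Y}$.
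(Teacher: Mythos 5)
Your proof is correct, and it is essentially the same argument as the one the paper delegates to its references (F\"ollmer--Schied, Theorem A.37, and the cited stochastic calculus text): truncate to $[0,1]$ by a strictly increasing bijection, close the family under finite minima, extract an expectation-minimizing decreasing sequence, and use the equality $\mathbb{E}[\min(Z,Y')]=\mathbb{E}[Z]$ to upgrade the countable construction to a lower bound for the whole family. Your treatment of the downward-directed case, selecting the minimizing sequence inside $\mathcal{Y}$ itself, also matches the standard proof, so nothing further is needed.
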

\begin{theorem}[Proof of Theorem \ref{Thm:Explicit_index}]
	\label{Thm:Explicit_index: proof}
	Let $\gamma(s)$ be the robust Gittins index (Definition \ref{Gittins index}) (with superscript $(m)$ omitted). Then
	$$\gamma(s) = \essinf_{\tau \in \mathcal{T}(s)} \; \esssup_{\mathbb{Q} \in \mathcal{Q}} \frac{\mathbb{E}^{\mathbb{Q}}\big( \sum_{t=1}^{\tau} \beta^{t} h(s+t) \big| \mathcal{F}_s\big)}{\mathbb{E}^{\mathbb{Q}}\big( \sum_{t=1}^{\tau} \beta^{t} \big| \mathcal{F}_s\big)} $$
	where $\mathcal{Q}$ is a family of probability measure induced in Theorem \ref{Thm:dynamic robust rep}.
\begin{proof}
	By Corollary \ref{cost_vs_fair charge} and Theorem \ref{Thm:dynamic robust 
	rep} (together with positive homogeneity), we have that, for any $\tau \in 
	\mathcal{T}(s)$,
	\begin{equation}
	\label{positive_supremum}
	\esssup_{\mathbb{Q} \in \mathcal{Q}}\mathbb{E}^\mathbb{Q}\bigg( \sum_{t=1}^{\tau} \beta^{t} \big(h(s+t) - \gamma(s)\big) \bigg| \mathcal{F}_s\bigg) \geq 0.
	\end{equation}
	
	By Lemma 11.19 (together with the construction of Theorem 11.22) in F\"ollmer and Schied \cite{stoc_fin}, the family	
	$\big\{\mathbb{E}^\mathbb{Q}\big( \sum_{t=1}^{\tau} \beta^{t} \big(h(s+t) - \gamma(s)\big) \big| \mathcal{F}_s\big) : \mathbb{Q} \in \mathcal{Q} \big\}$
	must be directed upwards. Hence, by Theorem \ref{existence of essinf}, we can find a family $(\mathbb{Q}_n) \subseteq \mathcal{Q}$ such that
	\begin{equation}
	\label{convergence_to_esssup}
	\mathbb{E}^{\mathbb{Q}_n}\bigg( \sum_{t=1}^{\tau} \beta^{t} \big(h(s+t) - \gamma(s)\big)\bigg) \xrightarrow[n \to \infty]{} \esssup_{\mathbb{Q} \in \mathcal{Q}}\mathbb{E}^\mathbb{Q}\bigg( \sum_{t=1}^{\tau} \beta^{t} \big(h(s+t) - \gamma(s)\big) \bigg| \mathcal{F}_s\bigg). 
	\end{equation}
	Let $\tilde{\Omega}$ be an event with probability one such that the followings hold.
	\begin{enumerate}
		\item \eqref{positive_supremum} and  \eqref{convergence_to_esssup} holds.
		\item For all $n \in \mathbb{N}$,
		\begin{equation*}
		\frac{\mathbb{E}^{\mathbb{Q}_n}\big( \sum_{t=1}^{\tau} \beta^{t} h(s+t) \big| \mathcal{F}_s\big)}{\mathbb{E}^{\mathbb{Q}_n}\big( \sum_{t=1}^{\tau} \beta^{t} \big| \mathcal{F}_s\big)} \leq  \esssup_{\mathbb{Q} \in \mathcal{Q}}\frac{\mathbb{E}^{\mathbb{Q}}\big( \sum_{t=1}^{\tau} \beta^{t} h(s+t) \big| \mathcal{F}_s\big)}{\mathbb{E}^{\mathbb{Q}}\big( \sum_{t=1}^{\tau} \beta^{t} \big| \mathcal{F}_s\big)},
		\end{equation*}
		
		$\mathbb{E}^{\mathbb{Q}_n}\big( \sum_{t=1}^{\tau} \beta^{t} \big| \mathcal{F}_s\big) \geq \beta,$ and
		\begin{align}
		\label{property of subsequence measure}
		& \mathbb{E}^{\mathbb{Q}_n}\bigg( \sum_{t=1}^{\tau} \beta^{t} \big(h(s+t) - \gamma(s)\big) \bigg| \mathcal{F}_s\bigg) \nonumber \\ & = \mathbb{E}^{\mathbb{Q}_n}\bigg( \sum_{t=1}^{\tau} \beta^{t} h(s+t) \bigg| \mathcal{F}_s\bigg) - \gamma(s) \mathbb{E}^{\mathbb{Q}_n}\bigg( \sum_{t=1}^{\tau} \beta^{t} \bigg| \mathcal{F}_s\bigg).
		\end{align}
	\end{enumerate}
	Fix $\omega \in \tilde{\Omega}$ and $\epsilon > 0$. By \eqref{positive_supremum} and  \eqref{convergence_to_esssup}, there exists $n \in \mathbb{N}$ such that
	$$ \mathbb{E}^{\mathbb{Q}_n}\bigg( \sum_{t=1}^{\tau} \beta^{t} \big(h(s+t) - \gamma(s)\big) \bigg| \mathcal{F}_s\bigg)(\omega) \geq -\epsilon.$$
	By \eqref{property of subsequence measure}, we can rearrange the inequality above and obtain
	\begin{align*}
	\gamma(s)(\omega)& \leq \frac{\mathbb{E}^{\mathbb{Q}_n}\big( \sum_{t=1}^{\tau} \beta^{t} h(s+t) \big| \mathcal{F}_s\big)}{\mathbb{E}^{\mathbb{Q}_n}\big( \sum_{t=1}^{\tau} \beta^{t} \big| \mathcal{F}_s\big)}(\omega) + \frac{\epsilon}{\mathbb{E}^{\mathbb{Q}_n}\big( \sum_{t=1}^{\tau} \beta^{t} \big| \mathcal{F}_s\big)(\omega)}\\
	& \leq \esssup_{\mathbb{Q} \in \mathcal{Q}}\frac{\mathbb{E}^{\mathbb{Q}}\big( \sum_{t=1}^{\tau} \beta^{t} h(s+t) \big| \mathcal{F}_s\big)}{\mathbb{E}^{\mathbb{Q}}\big( \sum_{t=1}^{\tau} \beta^{t} \big| \mathcal{F}_s\big)}(\omega) + \frac{\epsilon}{\beta}.
	\end{align*}
	As $\epsilon$ is arbitrary, it follows that on $\tilde{\Omega}$,
	$$ \gamma(s) \leq \esssup_{\mathbb{Q} \in \mathcal{Q}}\frac{\mathbb{E}^{\mathbb{Q}}\big( \sum_{t=1}^{\tau} \beta^{t} h(s+t) \big| \mathcal{F}_s\big)}{\mathbb{E}^{\mathbb{Q}}\big( \sum_{t=1}^{\tau} \beta^{t} \big| \mathcal{F}_s\big)},$$
	hence,
	$$ \gamma(s) \leq \essinf_{\tau \in \mathcal{T}(s)} \; \esssup_{\mathbb{Q} \in \mathcal{Q}}\frac{\mathbb{E}^{\mathbb{Q}}\big( \sum_{t=1}^{\tau} \beta^{t} h(s+t) \big| \mathcal{F}_s\big)}{\mathbb{E}^{\mathbb{Q}}\big( \sum_{t=1}^{\tau} \beta^{t} \big| \mathcal{F}_s\big)}.$$
	By Theorem \ref{optimal stopping} and Theorem \ref{Thm:dynamic robust rep}, we can find $\sigma := \sigma(s,\gamma(s)) \in \mathcal{T}(s)$ such that,
	\begin{equation*}
	\esssup_{\mathbb{Q} \in \mathcal{Q}}\mathbb{E}^\mathbb{Q}\bigg( \sum_{t=1}^{\sigma} \beta^{t} \big(h(s+t) - \gamma(s)\big) \bigg| \mathcal{F}_s\bigg) = 0,
	\end{equation*}
	hence, for all $\mathbb{Q} \in \mathcal{Q}$,
	$$\mathbb{E}^\mathbb{Q}\bigg( \sum_{t=1}^{\sigma} \beta^{t} \big(h(s+t) - \gamma(s)\big) \bigg| \mathcal{F}_s\bigg) \leq 0.$$
	Therefore,
	$$ \gamma(s) \geq \frac{\mathbb{E}^{\mathbb{Q}}\big( \sum_{t=1}^{\sigma} \beta^{t} h(s+t) \big| \mathcal{F}_s\big)}{\mathbb{E}^{\mathbb{Q}}\big( \sum_{t=1}^{\sigma} \beta^{t} \big| \mathcal{F}_s\big)},$$
	and we conclude
	\begin{align*}
	\gamma(s) & \geq \esssup_{\mathbb{Q} \in \mathcal{Q}} \frac{\mathbb{E}^{\mathbb{Q}}\big( \sum_{t=1}^{\sigma} \beta^{t} h(s+t) \big| \mathcal{F}_s\big)}{\mathbb{E}^{\mathbb{Q}}\big( \sum_{t=1}^{\sigma} \beta^{t} \big| \mathcal{F}_s\big)} \\
	& \geq \essinf_{\tau \in \mathcal{T}(s)} \; \esssup_{\mathbb{Q} \in \mathcal{Q}} \frac{\mathbb{E}^{\mathbb{Q}}\big( \sum_{t=1}^{\tau} \beta^{t} h(s+t) \big| \mathcal{F}_s\big)}{\mathbb{E}^{\mathbb{Q}}\big( \sum_{t=1}^{\tau} \beta^{t} \big| \mathcal{F}_s\big)}.
	\end{align*}
	This completes the proof.
\end{proof}
\end{theorem}
\begin{proposition}
	\label{property of joining expectation: proof}
	All properties described in Proposition \ref{property of joining expectation} hold.
\begin{proof}

	$(i)$ is straightforward to prove using the 
	definition directly, as in the case of $\mathcal{E}$. 
	
	By Theorem \ref{Thm:dynamic robust rep} and the tower property,
	\begin{eqnarray*}
		\esssup_{\mathbb{Q} \in 
			\mathcal{Q}}\mathbb{E}^{\mathbb{Q}}\big(X \big| 
		\mathcal{F}(S)\big) & = & \esssup_{\mathbb{Q} \in 
			\mathcal{Q}}\mathbb{E}^{\mathbb{Q}}\Big(\mathbb{E}^{\mathbb{Q}}\big(X
			 \big| 
		\mathcal{F}(S')\big) \Big| 
		\mathcal{F}(S)\Big) \\
		& \leq & \esssup_{\mathbb{Q} \in 
			\mathcal{Q}}\mathbb{E}^{\mathbb{Q}}\Big(\esssup_{\mathbb{Q}' \in 
			\mathcal{Q}}\mathbb{E}^{\mathbb{Q}'}\big(X \big| 
		\mathcal{F}(S')\big) \Big| 
		\mathcal{F}(S)\Big),
	\end{eqnarray*}	
	hence $(ii)$ follows. 
	
	For $\hat{Y}(\omega^{(1)},...,\omega^{(M)}) = 
	X^{(1)}(\omega_1) \times \cdots \times X^{(M)}(\omega_M) $, by Fubini's theorem,
	\begin{align*}
	&\esssup_{\bigotimes_{m=1}^M\mathbb{Q}^{(m)} \in 
	\mathcal{Q}}\mathbb{E}^{\bigotimes_{m=1}^M\mathbb{Q}^{(m)}}\bigg(\prod_{m=1}^M
	 X^{(m)}\bigg| 
	\mathcal{F}(S)\bigg) \\
& \qquad  =  \esssup_{\bigotimes_{m=1}^M\mathbb{Q}^{(m)} \in 
	\mathcal{Q}} \;\; \prod_{m=1}^M \mathbb{E}^{\mathbb{Q}^{(m)}}\big( 
	X^{(m)}\big| 
	\mathcal{F}^{(m)}_{S^{(m)}}\big).
	\end{align*}
	Then $(iii)$ and $(iv)$ follow by considering different choices of $X^{(m)}$.
\end{proof}
\end{proposition}
\begin{lemma}
	\label{existence of optimal stopping seq}
	Let $V_s : \mathcal{T}(s) \to L^\infty(\mathcal{F}_s)$ be a function such that for every $\tau, \sigma \in \mathcal{T}(s)$ and $A \in \mathcal{F}_s$, we have
	$$V_s(\tau\; \mathbb{I}_A + \sigma \;\mathbb{I}_{A^c}) =  V_s(\tau) \mathbb{I}_A + V_s(\sigma) \mathbb{I}_{A^c}. $$
	Then there exists a sequence $\tau_n \in \mathcal{T}(s)$ such that $V_s(\tau_n) \searrow \essinf_{\tau \in \mathcal{T}(s)}V_s(\tau)$ $\mathbb{P}$-a.s.. 
	\begin{proof}
		For $\tau, \sigma \in \mathcal{T}(s)$, we define $A := \left\{V_s(\tau) > V_s(\sigma)\right\} \in \mathcal{F}_s$. Then 
		$$\tilde{\tau} := \tau\; \mathbb{I}_A + \sigma \;\mathbb{I}_{A^c} \in \mathcal{T}(s).$$ 
		By assumption, $V_s(\tilde{\tau}) \geq V_s(\tau) \wedge V_s(\sigma)$; the result follows from Theorem \ref{existence of essinf}.
	\end{proof}
\end{lemma}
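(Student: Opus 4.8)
The plan is to reduce the claim to the second part of Theorem \ref{existence of essinf}, which guarantees that a \emph{downward-directed} family of $\mathcal{F}_s$-measurable random variables admits a decreasing sequence converging $\mathbb{P}$-a.s.\ to its essential infimum. Accordingly, I would set $\mathcal{Y} := \{V_s(\tau) : \tau \in \mathcal{T}(s)\} \subseteq L^\infty(\mathcal{F}_s)$, so that $\essinf_{\tau \in \mathcal{T}(s)} V_s(\tau) = \essinf \mathcal{Y}$; it then suffices to verify that $\mathcal{Y}$ is directed downwards.

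First I would fix $\tau, \sigma \in \mathcal{T}(s)$ and construct an admissible stopping time whose value realizes the pointwise minimum $V_s(\tau) \wedge V_s(\sigma)$. Since $V_s(\tau)$ and $V_s(\sigma)$ are $\mathcal{F}_s$-measurable, the set $A := \{V_s(\tau) \le V_s(\sigma)\}$ lies in $\mathcal{F}_s$, and I define the spliced random time $\tilde{\tau} := \tau\,\mathbb{I}_A + \sigma\,\mathbb{I}_{A^c}$. The next step is to check $\tilde{\tau} \in \mathcal{T}(s)$: because $A \in \mathcal{F}_s \subseteq \mathcal{F}_{s+t}$ for every $t \ge 0$, one has $\{\tilde{\tau} \le t\} = (A \cap \{\tau \le t\}) \cup (A^c \cap \{\sigma \le t\}) \in \mathcal{F}_{s+t}$, so $\tilde{\tau}$ is an $(\mathcal{F}_{s+t})_{t\ge 0}$-stopping time, and boundedness and positivity are inherited from $\tau$ and $\sigma$.

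With $\tilde{\tau}$ in hand, I would invoke the pasting hypothesis on $V_s$ for this choice of $A$, which gives $V_s(\tilde{\tau}) = V_s(\tau)\,\mathbb{I}_A + V_s(\sigma)\,\mathbb{I}_{A^c}$. On $A$ we have $V_s(\tau) \le V_s(\sigma)$, so $V_s(\tilde{\tau}) = V_s(\tau) = V_s(\tau) \wedge V_s(\sigma)$; on $A^c$ we have $V_s(\sigma) < V_s(\tau)$, so likewise $V_s(\tilde{\tau}) = V_s(\sigma) = V_s(\tau) \wedge V_s(\sigma)$. Hence $V_s(\tilde{\tau}) = V_s(\tau) \wedge V_s(\sigma) \in \mathcal{Y}$, which is precisely the statement that $\mathcal{Y}$ is directed downwards. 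Applying Theorem \ref{existence of essinf} then yields a decreasing sequence $V_s(\tau_n) \searrow \essinf \mathcal{Y} = \essinf_{\tau \in \mathcal{T}(s)} V_s(\tau)$ $\mathbb{P}$-a.s., as required.

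I expect no serious obstacle here: the only delicate point is confirming that the spliced random time $\tilde{\tau}$ is a genuine element of $\mathcal{T}(s)$, which rests entirely on $A$ being $\mathcal{F}_s$-measurable (equivalently, measurable with respect to the initial $\sigma$-algebra of the shifted filtration $(\mathcal{F}_{s+t})_{t\ge 0}$). Everything else is the mechanical transfer, via the pasting property, of the pointwise minimum of two values to the value of a single admissible stopping time, after which directedness and the cited existence theorem close the argument.
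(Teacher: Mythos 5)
Your proof is correct and takes essentially the same route as the paper's: splice $\tau$ and $\sigma$ along an $\mathcal{F}_s$-measurable comparison event, use the pasting hypothesis to realize $V_s(\tau)\wedge V_s(\sigma)$ as the value of an admissible stopping time, deduce that the family of values is downward directed, and invoke Theorem \ref{existence of essinf}. In fact your version repairs a sign typo in the published proof, which takes $A=\{V_s(\tau)>V_s(\sigma)\}$ with $\tilde{\tau}=\tau\,\mathbb{I}_A+\sigma\,\mathbb{I}_{A^c}$ (so that $V_s(\tilde{\tau})$ equals the \emph{maximum} $V_s(\tau)\vee V_s(\sigma)$, which does not yield downward directedness); your choice $A=\{V_s(\tau)\le V_s(\sigma)\}$, together with the explicit verification that the spliced time is a genuine element of $\mathcal{T}(s)$, is the intended argument.
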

\begin{lemma}
	\label{double inf relation}
	Let $f: \mathcal{T}(s) \times L^\infty(\mathcal{F}_s) \to	
	L^\infty(\mathcal{F}_s) $ satisfy:
	\begin{enumerate}[(i)]
		\item For all $\tau \in \mathcal{T}(s)$, $f(\tau,0) \geq 0$ 
		$\mathbb{P}$-a.s. and $f(\tau,X) < 0$ for some $X \in 
		L^\infty(\mathcal{F}_s).$	
		\item There exists $L \in [0,\infty)$ such that, for every $X,Y \in 
		L^\infty(\mathcal{F}_s)$ with $X \geq Y $ 
		$\mathbb{P}$-a.s and $\tau \in \mathcal{T}(s)$, we have 
		$$0 \leq f(\tau,Y) - f(\tau,X) \leq L(X-Y) \;\;\; \mathbb{P}-a.s.$$
		\item  For all $A \in \mathcal{F}_s$, all $\tau, \sigma \in 
		\mathcal{T}(s)$ and all $X,Y 
		\in L^\infty(\mathcal{F}_s) $, we have
		$$f(\tau\mathbb{I}_A + \sigma \mathbb{I}_{A^c}, X \mathbb{I}_A + Y 
		\mathbb{I}_{A^c}) = f(\tau,X) \mathbb{I}_A + f(\sigma,Y) 
		\mathbb{I}_{A^c}.$$
	\end{enumerate}
	Define
	$$X^{*} := \essinf\Big\{X \in L^\infty(\mathcal{F}_s)  \;  : 
	\;\; 
	\essinf_{\tau \in \mathcal{T}(s)} f(\tau,X) \leq 0 \;\;\; 
	\mathbb{P}\text{-a.s.} \Big\}.$$
	Then $X^* \in L^\infty(\mathcal{F}_s) $ and
	$$	\essinf_{\tau \in \mathcal{T}(s)} f(\tau,X^*) = 0 \;\;\; 
	\mathbb{P}\text{-a.s.}$$
	Note: All essential infima in this lemma are taken among the
	$\mathcal{F}_s$-measurable functions.
	\begin{proof}
		Denote $$\mathcal{X} := \Big\{X \in L^\infty(\mathcal{F}_s)  \;  : 
		\;\; 
		\essinf_{\tau \in \mathcal{T}(s)} f(\tau,X) \leq 0 \;\;\; 
		\mathbb{P}\text{-a.s.} \Big\}. $$ 
		By (i), $\mathcal{X} \neq \phi$. For a fixed $X \in \mathcal{X}$, by Lemma \ref{existence of optimal stopping seq} there exists a sequence 
		$\tau_k \in 
		\mathcal{T}(s)$, such that $f(\tau_k, X) \searrow 
		\essinf_{\tau \in \mathcal{T}(s)}f(\tau, X) $ 
		$\mathbb{P}\text{-a.s.}$ Similarly, we can find a sequence $(\tau'_k)$ for $X' \in \mathcal{X}$.		
		
		Define a sequence 
		$\sigma_k := \tau_k \mathbb{I}_A + \tau'_k \mathbb{I}_{A^c}$ where $A = \{X 
		\leq X'\}$. Then
		\begin{align*}
			\essinf_{\tau \in \mathcal{T}(s)}f\left(\tau, \min(X,X')\right) & \leq f(\sigma_n, \min(X,X'))  \\
			&= f(\tau_k\mathbb{I}_A + \tau'_k \mathbb{I}_{A^c}, X \mathbb{I}_A + X' \mathbb{I}_{A^c}) \\
			& =  f(\tau_k, X) \mathbb{I}_A + f(\tau'_k, X') \mathbb{I}_{A^c} \\ 
			& \searrow  \essinf_{\tau \in \mathcal{T}(s)}f(\tau, X)\mathbb{I}_A + \essinf_{\tau \in \mathcal{T}(s)}f(\tau, X')\mathbb{I}_{A^c} \;\; \leq \;\; 0.
		\end{align*}
		Hence, $\mathcal{X}$ is downward directed. Therefore, by Theorem \ref{existence of essinf}, there 
		exists a sequence $\left(X_n\right)_{n \geq 0} \subseteq \mathcal{X}$ such that $X_n \searrow X^*$ $\mathbb{P}$-a.s. This implies that $X^*$ is almost surely bounded from above. 
		By monotonicity, as $f(\tau, 0) \geq 0$, it follows from strict 
		monotonicity that $f(\tau, -1) > 0$. Therefore, $-1$ is an 
		essential lower bound of $\mathcal{X}$, so $X^*$ is bounded below 
		by $-1$ and $X^* \in L^\infty(\mathcal{F}_s)$.
		
		For the final assertion, we will first show that $\essinf_{\tau \in \mathcal{T}(s)}f(\tau,X^*) \leq 0$ 
		$\mathbb{P}$-a.s. For each $n \in \mathbb{N}$, we can again find a 
		sequence $\tau^n_k$ such that
		$$f(\tau^n_k, X_n) \searrow \essinf_{\tau \in 
			\mathcal{T}(s)}f(\tau,X_n) \leq 0 \;\;\; \text{as} \;\;\; k \to \infty \;\;\; 
		\mathbb{P} \text{-a.s.}. $$
		By 
		condition 
		(ii), it follows that 
		\begin{eqnarray*}
			L(X_n - X^*) & \geq & f(\tau^n_k, X^*) - f(\tau^n_k, X_n) \\
			& \geq & \essinf_{\tau \in \mathcal{T}(s)} f(\tau, X^*) - f(\tau^n_k, X_n) \\
			& \nearrow & \essinf_{\tau \in \mathcal{T}(s)} f(\tau, X^*) - \essinf_{\tau \in 
				\mathcal{T}(s)} f(\tau, X_n) \;\;\;\;\;\; \text{as} \;\;\; {k \to \infty}\\
			& \geq & \essinf_{\tau \in \mathcal{T}(s)} f(\tau, X^*) .
		\end{eqnarray*}
		By taking $n \to \infty$, it follows that $\essinf_{\tau \in \mathcal{T}(s)} 
		f(\tau, X^*) \leq 0$.
		
		To finish the proof, it suffices to show that for all $\sigma \in 
		\mathcal{T}(s)$, we have $f(\sigma,X^*) \geq 0$. Fix $\sigma \in 
		\mathcal{T}(s)$. Define $F := -f(\sigma,X^*)$ and the event $B:= \left\{ F >  
		0\right\}$.	Let $\tau_k \in \mathcal{T}(s)$ be a sequence such that 
		$f(\tau_k, X^*) \searrow \essinf_{\tau \in \mathcal{T}(s)} 
		f(\tau, X^*) \leq 0$. We define $Y := X^* \mathbb{I}_{B^c} + \left(X^* - 
		\frac{F}{2L}\right) 
		\mathbb{I}_B \leq X^*$ and a sequence $\sigma_k = \tau_k \mathbb{I}_{B^c} + 
		\sigma \mathbb{I}_B$. Then
		\begin{align*}
			\essinf_{\tau \in \mathcal{T}(s)}f(\tau,Y) & \leq  f(\sigma_k,Y) \\
			&= f(\tau_k,X^*)\mathbb{I}_{B^c} + 
			f\Big(\sigma,X^*-\frac{F}{2L}\Big)\mathbb{I}_{B}\\
			& =  f(\tau_k,X^*)\mathbb{I}_{B^c} + 
			\bigg(\Big(f\Big(\sigma,X^*-\frac{F}{2L}\Big) - 
			f\left(\sigma,X^*\right)\Big) - F\bigg)\mathbb{I}_{B} \\
			& \leq  f(\tau_k,X^*)\mathbb{I}_{B^c} + \left(L\left(X^* - \left(X^* 
			-\frac{F}{2L} \right)\right) - F\right)\mathbb{I}_B \\
			& =  f(\tau_k,X^*)\mathbb{I}_{B^c} - \frac{F}{2}\mathbb{I}_B \\
			&\leq f(\tau_k,X^*)\mathbb{I}_{B^c} \;\;\searrow  \;\; \essinf_{\tau \in \mathcal{T}(s)} 
			f(\tau, X^*)\mathbb{I}_{B^c} \quad \leq \quad  0.
		\end{align*}
		Hence, $Y \in \mathcal{X}$. By minimality of $X^*$, it must follow that $B$ is 
		a $\mathbb{P}$-null set.
	\end{proof}
\end{lemma}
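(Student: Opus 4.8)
The plan is to work directly with the family
$\mathcal{X} := \big\{X \in L^\infty(\mathcal{F}_s) : \essinf_{\tau \in \mathcal{T}(s)} f(\tau,X) \le 0 \text{ a.s.}\big\}$
and to show it is nonempty, bounded below, and downward directed, so that its essential infimum $X^*$ lies in $L^\infty(\mathcal{F}_s)$ and satisfies $\essinf_\tau f(\tau,X^*)=0$. Writing $g(X) := \essinf_{\tau \in \mathcal{T}(s)} f(\tau,X)$, the monotonicity half of (ii) makes $g$ nonincreasing, and its Lipschitz half makes $g$ $L$-Lipschitz in the sense that $f(\tau,Y)\le f(\tau,X)+L(X-Y)$ for $Y\le X$. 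The first observation I would record is that, for each fixed $X$, the map $\tau\mapsto f(\tau,X)$ satisfies the pasting hypothesis of Lemma \ref{existence of optimal stopping seq} (apply (iii) with $X=Y$), so there is a sequence $\tau_k$ with $f(\tau_k,X)\searrow g(X)$ a.s. This minimizing-sequence fact is the workhorse of every subsequent step.

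Next I would verify that $\mathcal{X}$ is nonempty (immediate from (i)) and downward directed. For directedness, given $X,X'\in\mathcal{X}$ I set $A:=\{X\le X'\}\in\mathcal{F}_s$, take minimizing sequences $\tau_k,\tau_k'$ for $X,X'$, and paste them as $\sigma_k:=\tau_k\mathbb{I}_A+\tau_k'\mathbb{I}_{A^c}$; property (iii) gives $f(\sigma_k,\min(X,X'))=f(\tau_k,X)\mathbb{I}_A+f(\tau_k',X')\mathbb{I}_{A^c}$, which decreases to $g(X)\mathbb{I}_A+g(X')\mathbb{I}_{A^c}\le 0$, so $\min(X,X')\in\mathcal{X}$. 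Theorem \ref{existence of essinf} then yields a decreasing sequence $X_n\searrow X^*$ with $X_n\in\mathcal{X}$. Boundedness above is automatic ($X^*\le X_1$); for boundedness below I would use strict monotonicity together with (i): since $f(\tau,0)\ge 0$ and $-1<0$, strict monotonicity forces $f(\tau,-1)>0$ for every $\tau$, so $-1$ is an essential lower bound of $\mathcal{X}$ and hence $X^*\ge -1$, giving $X^*\in L^\infty(\mathcal{F}_s)$.

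It remains to show $g(X^*)=0$, which splits into two inequalities. For $g(X^*)\le 0$, I would use the Lipschitz bound: since $X^*\le X_n$, we have $f(\tau,X^*)\le f(\tau,X_n)+L(X_n-X^*)$ for every $\tau$, and taking $\essinf_\tau$ (the correction is $\tau$-independent) gives $g(X^*)\le g(X_n)+L(X_n-X^*)\le L(X_n-X^*)\to 0$. For the reverse inequality $g(X^*)\ge 0$, the delicate minimality step, I would argue by contradiction: if some $\sigma$ had $f(\sigma,X^*)<0$ on $B:=\{F>0\}$ with $F:=-f(\sigma,X^*)$ and $\mathbb{P}(B)>0$, I would form the strictly smaller competitor $Y:=X^*-\tfrac{F}{2L}\mathbb{I}_B\le X^*$. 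Pasting a minimizing sequence $\tau_k$ for $X^*$ off $B$ with $\sigma$ on $B$, property (iii) gives $f(\tau_k\mathbb{I}_{B^c}+\sigma\mathbb{I}_B,\,Y)=f(\tau_k,X^*)\mathbb{I}_{B^c}+f(\sigma,X^*-\tfrac{F}{2L})\mathbb{I}_B$; the Lipschitz bound controls $f(\sigma,X^*-\tfrac{F}{2L})-f(\sigma,X^*)\le \tfrac{F}{2}$, so this term is $\le -\tfrac{F}{2}<0$ on $B$, while off $B$ the first term decreases to $g(X^*)\le 0$. Hence $g(Y)\le 0$, so $Y\in\mathcal{X}$; but $Y<X^*$ on $B$ contradicts $X^*=\essinf\mathcal{X}$.

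I expect the minimality step to be the main obstacle: it requires the precise perturbation size $\tfrac{F}{2L}$ so that the Lipschitz modulus $L$ is exactly absorbed and still leaves a strictly negative margin after pasting. A secondary subtlety is the lower bound for $\mathcal{X}$, which genuinely relies on the \emph{strict} form of monotonicity (available here because the underlying $\mathcal{E}$ is strictly monotone) rather than the non-strict inequality written in (ii); without it one could have $g$ vanishing on $(-\infty,0]$ and $X^*$ failing to be finite.
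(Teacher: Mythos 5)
Your proposal is correct and takes essentially the same route as the paper's own proof: the same pasting-based minimizing sequences from Lemma \ref{existence of optimal stopping seq}, the same downward-directedness argument for $\mathcal{X}$, the same Lipschitz estimate to get $\essinf_{\tau} f(\tau,X^*) \leq 0$, and the identical $\tfrac{F}{2L}$-perturbation contradiction for minimality. You even flag the same subtlety the paper treats only briefly, namely that the lower bound $-1$ for $\mathcal{X}$ rests on strict monotonicity inherited from $\mathcal{E}$ rather than on the non-strict inequality in hypothesis (ii).
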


\begin{theorem}
	\label{existence of optimal stopping time}
	There exists $\tau^* \in \mathcal{T}(s)$ such that 
	$V_s(\tau^*,\lambda) = \essinf_{\tau \in \mathcal{T}(s)} V_s(\tau,\lambda)$.
	\begin{proof}
				 We write $\mathcal{F}^s_n := \mathcal{F}_{s+n}$ and define the 
				 processes
		$$Y_0 := \frac{(C+1)-\lambda}{1-\beta}, \quad Y_n := 
		\sum_{t=s+1}^{s+n}\beta^t \left( h(t) - \lambda \right) \;\;\; 
		\text{and} \;\;\;  Z_n := \essinf_{\substack{\tau \geq n}} 
		\mathcal{E}\big(Y_\tau \big| \mathcal{F}^{s}_n \big) $$
		where $\tau$ is considered over the space of all stopping times and $C$ 
		is an upper bound given in Assumption \ref{assumption for cost process}.
		
		Define $\tau^* := \inf\{n \geq 0: Y_n = Z_n\}$.
		
		By robust representation theorem (Theorem \ref{Thm:dynamic robust rep}), we can represent $\mathcal{E}$ as an essential supremum over a familiy of probability measures which satisfy the law of iteration \cite[Equation 4]{nonlinear_snell_envelope}.  It then follows Riedel \cite[Theorem 3]{nonlinear_snell_envelope} that if  $\tau^*<\infty$ $\mathbb{P}$-a.s., then $\tau^*$ is an optimal solution.
		
		Hence, to prove the required result, it suffices to prove that $\tau^* 
		\in \mathcal{T}(s)$, $Z_0 = \essinf_{\tau \in \mathcal{T}(s)} 
		V_s(\tau,\lambda)$ and  $\tau^*<\infty$ $\mathbb{P}$-a.s. 
		
		It is clear that we never stop at time $0$. Therefore, $\tau^* \in 
		\mathcal{T}(s)$ and $Z_0 = \essinf_{\tau \in \mathcal{T}(s)} 
		V_s(\tau,\lambda)$.
		
		On an event $\omega$ such that $\lambda(\omega) < C$ and $h(t)(\omega) \to C$, we can find $N(\omega)$ sufficiently large such that $\sum_{t=N+1}^{n}\beta^t \left( h(s+t) - \lambda \right)(\omega) > 0$ for all $n$. In particular, we have $\tau^*(\omega) \leq N(\omega) < \infty$. Therefore, we have $\tau^* < \infty$ $\mathbb{P}$-a.s. and thus it must be optimal.
	\end{proof}
\end{theorem}

\begin{lemma}
	\label{construction simple form}
	Let $(\rho_n)_{0 \leq n \leq L-1}$ be a simple form of $(\tau, p)$. Then the sequence $(\rho_n)_{0 \leq n \leq L-1}$ can be expressed recursively by the following relation. Set $\rho_0 = p_0$ and define
		\begin{equation}
		\label{simple form eqn}
		\rho_n = \begin{cases}
		\rho_{n-1} & \text{if} \; \sum_{m=1}^{M}\pi^{(m)}_n
		\geq n, \\
		p_{\psi_n} & \text{if} \;
		\sum_{m=1}^{M}\pi^{(m)}_n = n-1,
		\end{cases} \;\;\; \text{where} \;\;\; \pi^{(m)}_n :=\sum_{k=0}^{\hat{F}^{(m)}_n-1} \tau^{(m)}_k,
		\end{equation} 
		and where
		$$ \psi_n =  \sum_{m=1}^{M}\hat{F}^{(m)}_n  \;\;\; \text{and} \;\;\; \hat{F}^{(m)}_n := \min\left\{f \geq 0 \; : \; \sum_{k=0}^{n-1}\mathbb{I}(\rho_k = m) \leq \sum_{k=0}^{f-1}\tau^{(m)}_k\right\}.$$
		\end{lemma}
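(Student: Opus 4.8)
The plan is to prove the identity by induction on the play index $n$, after first fixing the operational meaning of the simple form. I would begin by describing explicitly how an admissible strategy $(\tau,p)$ generates a sequence of plays: process the decisions $p_0,p_1,\dots$ in order; a null decision $p_i=0$ contributes no play, while a decision $p_i=m$ is the $\eta_i^{(m)}$th run of bandit $m$ and contributes exactly $\tau^{(m)}_{\eta_i^{(m)}}$ consecutive plays of $m$. The simple form $\rho$ of Definition \ref{simple form} is, by construction, the concatenation of these blocks. Two admissibility facts make this flattening well behaved and will be used repeatedly: by Definition \ref{recording sequences}(v) a null decision occurs only once every bandit is exhausted, so null decisions form a terminal tail beyond play $L-1$; and by Definition \ref{time allocation sequences}(ii) together with Definition \ref{recording sequences}(iv), a decision is never made on a bandit that is already exhausted, so every decision actually executed has a strictly positive run-duration. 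Consequently, on the active range $0\le n\le L-1$, each executed decision contributes at least one play and $\psi_n=\sum_m\hat F^{(m)}_n$ counts executed (equivalently, nonzero) decisions.

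Next I would record the bookkeeping identities that give the three auxiliary quantities their meaning. Writing $j^{(m)}_n:=\sum_{k=0}^{n-1}\mathbb{I}(\rho_k=m)$ for the number of trials of bandit $m$ among the first $n$ plays (so $\sum_m j^{(m)}_n=n$), the quantity $\hat F^{(m)}_n$ is the number of runs of bandit $m$ opened by play $n$, and $\pi^{(m)}_n$ is the total number of plays committed to those runs. The crucial observation is that any bandit not currently in play sits at a run boundary, so $\pi^{(m)}_n=j^{(m)}_n$ for $m\ne\rho_{n-1}$, whereas for the bandit $\rho_{n-1}$ currently in play, $R_n:=\pi^{(\rho_{n-1})}_n-j^{(\rho_{n-1})}_n\ge 0$ equals the number of plays still remaining in its current run. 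Summing gives the key identity $\sum_m\pi^{(m)}_n=n+R_n$, which converts the threshold on $\sum_m\pi^{(m)}_n$ appearing in the statement directly into the dichotomy ``$R_n>0$, the current run continues'' versus ``$R_n=0$, the current run has just ended.''

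With these identities in hand, the induction itself is short. The base case $\rho_0=p_0$ holds because the first executed decision is $p_0$, which on the active range is nonzero with positive duration. For the inductive step I would assume $\rho_0,\dots,\rho_{n-1}$ agree with the flattening and show $\rho_n$ does too. If $R_n>0$, the block generated by the current decision is not yet exhausted, so the next play repeats the same bandit and $\rho_n=\rho_{n-1}$. If $R_n=0$, the current block has been completed exactly at play $n-1$, so play $n$ begins the block of the next executed decision; here I would verify that $\psi_n$ is precisely the index of that next decision, using that $\hat F^{(m)}_n$ counts opened runs and that nulls and zero-duration runs do not intervene before play $L-1$, whence $\rho_n=p_{\psi_n}$. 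Both cases match the flattening, closing the induction.

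The main obstacle is the combinatorial bookkeeping at the step where a run ends: one must confirm that $\psi_n$ advances to exactly the next decision (neither over- nor under-shooting), and that this persists even when several short runs or the interface with the terminal null tail are involved. This reduces to the two admissibility facts above---that executed runs have positive duration and that null decisions are terminal---so the argument is careful rather than deep, with the identity $\sum_m\pi^{(m)}_n=n+R_n$ doing the real work of turning the abstract flattening into the stated recursion.
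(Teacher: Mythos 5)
Your overall route is the same as the paper's: the paper's proof is precisely the interpretation you describe (view $\hat F^{(m)}_n$ as the number of opened runs of bandit $m$ before play $n$, view $\pi^{(m)}_n$ as the total trials needed to complete those runs, and split according to whether the run currently open is finished), stated informally in two short paragraphs. Your added value is making this a genuine induction supported by the identity $\sum_m \pi^{(m)}_n = n + R_n$ with $R_n \geq 0$ the unplayed remainder of the current run, plus the two admissibility facts (executed runs have positive duration; null decisions are terminal). That bookkeeping is all correct.

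However, there is a genuine problem at the one step you wave through: the identity does \emph{not} convert ``directly'' into the thresholds printed in \eqref{simple form eqn}. By the minimality defining $\hat F^{(m)}_n$ one has $\pi^{(m)}_n \geq \sum_{k=0}^{n-1}\mathbb{I}(\rho_k = m)$ for every $m$, so your identity gives $\sum_m \pi^{(m)}_n = n + R_n \geq n$ for \emph{all} $n$: the first condition of \eqref{simple form eqn} is always satisfied, the second condition $\sum_m \pi^{(m)}_n = n-1$ never holds, and, read literally, the stated recursion outputs $\rho_n = p_0$ for every $n$. Your dichotomy ``$R_n>0$: continue'' versus ``$R_n=0$: new decision'' corresponds to the thresholds $\sum_m \pi^{(m)}_n > n$ and $\sum_m \pi^{(m)}_n = n$, not to those in the statement. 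This is confirmed by the paper's own Example \ref{example for allocation strategy} with $\theta = 2$: at $n=2$ one computes $\hat F^{(1)}_2 = 1$, $\pi^{(1)}_2 = 2$, $\pi^{(2)}_2 = 0$, so $\sum_m \pi^{(m)}_2 = 2 = n$; the true simple form switches to $\rho_2 = p_{\psi_2} = 2$, whereas the printed condition $\sum_m \pi^{(m)}_2 \geq 2$ would force $\rho_2 = \rho_1 = 1$. In other words the lemma as printed carries an off-by-one error (its conditions should read $> n$ and $= n$), which the paper's informal proof never detects and which your identity in fact exposes. Your argument is a correct proof of the corrected statement, but as written it asserts an arithmetic equivalence that is false; to be complete you must either flag and repair the thresholds in the statement, or the final matching step of your induction fails.
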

		\begin{proof}
		To see this, we view $\hat{F}^{(m)}_n$ as the number of runs (under $(\tau,p)$) of the $m$th bandit before making the $n$th play. We then consider $\pi^{(m)}_n$ as the total number of trials in the $m$th bandit required to complete the $\hat{F}^{(m)}_n$th run.
		
		If the run is not yet completed before the $n$th play, we continue to play the same machine. (i.e. we define $\rho_n = \rho_{n-1}$). If the run is completed in the $(n-1)$th play, we make a new decision in the $n$th play based on the choice sequence $p$. In that case, we have already made $\psi_n =  \sum_{m=1}^{M}\hat{F}^{(m)}_n$ decisions before the $n$th play. By our convention to start at $\rho_0$, the decision of the $n$th play is given by $p_{\psi_n}$.
		\end{proof}
\begin{definition} 
	\label{decision filtration G}
	Given a simple form choice sequence $\rho$, we define the \emph{decision filtration} induced by  $\rho$ by
\begin{equation}
	\mathcal{G}^\rho_n := \left\{A \in \mathcal{F}(T) \;\; : \;\; A \cap \{\eta_{n} = r\} \in \mathcal{F}(r) \;\right\}
\end{equation}
where $\eta$ is the recording sequence corresponding to $(\mathfrak{1}, \rho)$.
	
\end{definition}
\begin{lemma}
	\label{chain of decision filtration}
	The sequence of $\sigma$-algebras $\left(\mathcal{G}^\rho_n  \right)$ given in Definition \ref{decision filtration G} forms a filtration, i.e.
	$\mathcal{G}^{\rho}_{n} \subseteq \mathcal{G}^{\rho}_{n+1}$.
	\begin{proof}
		Suppose that $A \in \mathcal{G}^{\rho}_{n}$, then for $r \in \mathcal{S}$ and 
		$m \in \{0\} \cup \mathcal{M}$,
		$$A \cap \{\eta_{n} = r-e^{(m)}\} \in \mathcal{F}(r-e^{(m)}) \subseteq 
		\mathcal{F}(r).$$
		Moreover, by definition, $\{\rho_n = m \} \in \mathcal{G}^{\rho}_n$, it 
		follows that
		\begin{align*}A \cap \{\eta_{n+1} = r\} &= \bigcup_{m=0}^M \big(A \cap 
		\{\eta_{n} 
		= r-e^{(m)}\} \big) \cap \big( \{\rho_n = m \} \cap \{\eta_{n} 
		= r-e^{(m)}\} \big) \\&\in \mathcal{F}(r). 
		\end{align*}
		Hence, $A \in \mathcal{G}^{\rho}_{n+1}$.
	\end{proof}
\end{lemma}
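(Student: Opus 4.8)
The plan is to verify the inclusion $\mathcal{G}^\rho_n \subseteq \mathcal{G}^\rho_{n+1}$ directly from Definition \ref{decision filtration G}, by taking an arbitrary $A \in \mathcal{G}^\rho_n$ and checking that $A \cap \{\eta_{n+1} = r\} \in \mathcal{F}(r)$ for every $r \in \mathcal{S}$. The whole argument rests on decomposing the event $\{\eta_{n+1} = r\}$ according to which bandit was selected at the previous step, and then using two ingredients already available: the monotonicity of the orthant filtration ($\mathcal{F}(r') \subseteq \mathcal{F}(r)$ whenever $r' \leq r$) and the adaptedness of the choice sequence built into Definition \ref{recording sequences}.

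First I would recall that for the simple strategy $(\mathfrak{1}, \rho)$ the time allocation is constant equal to one, so $\Psi_r = r$ and the recording sequence satisfies the one-step relation $\eta_{n+1} = \eta_n + e^{(\rho_n)}$ (with the convention $e^{(0)} = 0$). This lets me write
\[
\{\eta_{n+1} = r\} = \bigcup_{m \in \{0\} \cup \mathcal{M}} \big(\{\eta_n = r - e^{(m)}\} \cap \{\rho_n = m\}\big),
\]
a finite union. Intersecting with $A$ and distributing presents $A \cap \{\eta_{n+1} = r\}$ as a finite union of the sets $A \cap \{\eta_n = r - e^{(m)}\} \cap \{\rho_n = m\}$, so it suffices to place each of these in $\mathcal{F}(r)$.

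For a fixed $m$, I would argue as follows. Since $A \in \mathcal{G}^\rho_n$, the set $A \cap \{\eta_n = r - e^{(m)}\}$ lies in $\mathcal{F}(r - e^{(m)})$, which is contained in $\mathcal{F}(r)$ by monotonicity of the orthant filtration. For the factor $\{\rho_n = m\}$, I cannot assert $\{\rho_n = m\} \in \mathcal{F}(r)$ on its own; instead I use that the choice sequence is adapted, i.e. Definition \ref{recording sequences}(iii) (with $p = \rho$ and $\Psi_r = r$) gives $\{\rho_n = m\} \cap \{\eta_n = r - e^{(m)}\} \in \mathcal{F}(r - e^{(m)}) \subseteq \mathcal{F}(r)$. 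Hence each term $A \cap \{\eta_n = r - e^{(m)}\} \cap \{\rho_n = m\}$ is a finite intersection of members of $\mathcal{F}(r)$ and therefore lies in $\mathcal{F}(r)$. Taking the finite union over $m$ keeps us in $\mathcal{F}(r)$, so $A \cap \{\eta_{n+1} = r\} \in \mathcal{F}(r)$ for every $r$, i.e. $A \in \mathcal{G}^\rho_{n+1}$.

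The only real subtlety — the step I would be most careful about — is the handling of the factor $\{\rho_n = m\}$: because $\mathcal{G}^\rho_n$ is not a single product $\sigma$-algebra of the form $\mathcal{F}(r)$ but is rather glued together across the values of $\eta_n$, the measurability of $\{\rho_n = m\}$ only becomes usable \emph{after} intersecting with $\{\eta_n = r - e^{(m)}\}$. Getting the index bookkeeping right — that selecting bandit $m$ at step $n$ advances $\eta$ by exactly $e^{(m)}$, with the degenerate case $m = 0$, $e^{(0)} = 0$ corresponding to ``no further play'' — is what makes the decomposition of $\{\eta_{n+1} = r\}$ valid; everything else is a routine application of filtration monotonicity together with closure of a $\sigma$-algebra under finite intersections and unions.
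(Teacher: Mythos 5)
Your proof is correct and follows essentially the same route as the paper's: you decompose $\{\eta_{n+1}=r\}$ according to the value of $\rho_n$, place each piece $A \cap \{\eta_n = r-e^{(m)}\}$ in $\mathcal{F}(r-e^{(m)}) \subseteq \mathcal{F}(r)$, and handle the factor $\{\rho_n = m\}$ via adaptedness of the choice sequence (the paper phrases this as $\{\rho_n = m\} \in \mathcal{G}^\rho_n$, which is the same fact you invoke from Definition~\ref{recording sequences}(iii) with $\Psi_r = r$). Your closing remark about the gluing structure of $\mathcal{G}^\rho_n$ — that $\{\rho_n=m\}$ is only usable after intersecting with $\{\eta_n = r-e^{(m)}\}$ — is exactly the point the paper's displayed union is engineered to respect.
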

\begin{lemma}
	\label{subset_1}
	With $(\xi^\rho_n)$ as in Remark \ref{observed vs decision filtration} and $(\mathcal{G}^\rho_n)$ in Definition \ref{decision filtration G}, $\xi^\rho_n$ is $\mathcal{G}^{\rho}_n$-measurable. In particular, $\mathcal{H}^{\rho}_n 
	:= \sigma\left(\xi^\rho_1, ..., \xi^\rho_n\right) \subseteq \mathcal{G}^{\rho}_n.$
	\begin{proof}
		By Lemma \ref{chain of decision filtration}, $\rho_{n-1}$  is 
		$\mathcal{G}^{\rho}_n$-measurable. Hence, for a fixed $B \in \mathcal{B}$, 
		we have
		$$
		\left\{\xi^\rho_n \in B\right\} \cap \left\{ \eta_n = r \right\} 
		=  \big\{\xi^{(m)}_{r^{(m)}} \in B\big\} \cap \underbrace{\left\{ 
			\eta_n = r 
			\right\}  \cap \left\{ \rho_{n-1} = m 
			\right\}}_{\in \mathcal{F}(r) \;\; \text{as} \;\; \left\{ 
			\rho_{n-1} = m 
			\right\} \; \in \; \mathcal{G}^{\rho}_n }  \in 
		\mathcal{F}(r). $$                
		Therefore,         $\xi^\rho_n$ is $\mathcal{G}^{\rho}_n$-measurable and the last 
		assertion follows from Lemma  \ref{chain of decision filtration}.
	\end{proof}
\end{lemma}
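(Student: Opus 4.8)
The plan is to verify directly the membership criterion built into the definition of $\mathcal{G}^\rho_n$. Fixing an arbitrary Borel set $B$, it suffices to show that $\{\xi^\rho_n \in B\} \cap \{\eta_n = r\} \in \mathcal{F}(r)$ for every $r \in \mathcal{S}$, since this is precisely the assertion $\{\xi^\rho_n \in B\} \in \mathcal{G}^\rho_n$, and letting $B$ range over all Borel sets then yields $\mathcal{G}^\rho_n$-measurability of $\xi^\rho_n$.

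The one structural fact I would establish first is that the decision $\rho_{n-1}$ is $\mathcal{G}^\rho_n$-measurable. For the simple form $(\mathfrak{1}, \rho)$ the time-allocation sequence is identically $1$, so $\Psi_r = r$, and the adaptedness required in Definition \ref{recording sequences}(iii) reads $\{\rho_k = m\} \cap \{\eta_k = r\} \in \mathcal{F}(r)$; that is, $\rho_k$ is $\mathcal{G}^\rho_k$-measurable. Since $\mathcal{G}^\rho_{n-1} \subseteq \mathcal{G}^\rho_n$ by Lemma \ref{chain of decision filtration}, taking $k = n-1$ gives $\{\rho_{n-1} = m\} \cap \{\eta_n = r\} \in \mathcal{F}(r)$ for all $m$ and $r$. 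I would also use the elementary inclusion $\{\eta_n = r\} \in \mathcal{F}(r)$ noted in Definition \ref{recording sequences}.

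Next I would exploit the explicit description of the $n$th observation, $\xi^\rho_n = \sum_m \sum_t \xi^{(m)}_t \mathbb{I}(\rho_{n-1} = m,\, \eta^{(m)}_n = t)$. On the event $\{\eta_n = r\} \cap \{\rho_{n-1} = m\}$ the trial count of bandit $m$ is pinned to the deterministic value $\eta^{(m)}_n = r^{(m)}$, so $\xi^\rho_n = \xi^{(m)}_{r^{(m)}}$ there. Hence
$$\{\xi^\rho_n \in B\} \cap \{\eta_n = r\} = \bigcup_{m=1}^M \Big(\{\xi^{(m)}_{r^{(m)}} \in B\} \cap \{\eta_n = r\} \cap \{\rho_{n-1} = m\}\Big),$$
and in each term $\{\xi^{(m)}_{r^{(m)}} \in B\} \in \mathcal{F}^{(m)}_{r^{(m)}} \subseteq \mathcal{F}(r)$ under the natural embedding, while $\{\eta_n = r\} \cap \{\rho_{n-1} = m\} \in \mathcal{F}(r)$ by the previous paragraph. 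A finite union of sets in the $\sigma$-algebra $\mathcal{F}(r)$ stays in $\mathcal{F}(r)$, which is exactly what is needed. The final assertion is then immediate: for $k \le n$, $\xi^\rho_k$ is $\mathcal{G}^\rho_k$-measurable, hence $\mathcal{G}^\rho_n$-measurable by the filtration property, so $\mathcal{H}^\rho_n = \sigma(\xi^\rho_1, \dots, \xi^\rho_n) \subseteq \mathcal{G}^\rho_n$.

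The main obstacle is conceptual rather than computational: one must see that conditioning simultaneously on $\{\eta_n = r\}$ (how many trials each bandit has had) and on $\{\rho_{n-1} = m\}$ (which bandit produced the $n$th observation) reduces the random index $\eta^{(m)}_n$ to the constant $r^{(m)}$, collapsing $\xi^\rho_n$ to a single coordinate $\xi^{(m)}_{r^{(m)}}$ that is measurable at orthant time $r$. The one index subtlety to handle with care is that it is $\rho_{n-1}$, not $\rho_n$, which must be adapted to $\mathcal{G}^\rho_n$, and this is precisely where the inclusion $\mathcal{G}^\rho_{n-1}\subseteq\mathcal{G}^\rho_n$ from Lemma \ref{chain of decision filtration} is invoked.
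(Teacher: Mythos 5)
Your proof is correct and follows essentially the same route as the paper's: establish that $\rho_{n-1}$ is $\mathcal{G}^\rho_n$-measurable via Definition \ref{recording sequences}(iii) together with the filtration property of Lemma \ref{chain of decision filtration}, then decompose $\{\xi^\rho_n \in B\}\cap\{\eta_n = r\}$ over the events $\{\rho_{n-1}=m\}$, on which $\xi^\rho_n$ collapses to the coordinate $\xi^{(m)}_{r^{(m)}}$. Your explicit union over $m$ is in fact slightly cleaner than the paper's display, which suppresses that union.
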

\begin{theorem}
	\label{equal filtration} 	With $(\mathcal{H}^\rho_n)_{n\ge0}$ as in Remark \ref{observed vs decision filtration} and $(\mathcal{G}^\rho_n  )_{n\ge0}$ in Definition \ref{decision filtration G},
	$\mathcal{G}^{\rho}_n = \mathcal{H}^{\rho}_n$.
	\begin{proof}
		We will prove this result by induction. It is clear that $\mathcal{G}^{\rho}_0 
		= \mathcal{H}^{\rho}_0$. We now assume that $\mathcal{G}^{\rho}_{n-1} = 
		\mathcal{H}^{\rho}_{n-1}$. 
		
		By Lemma \ref{subset_1}, it suffices to show that $\mathcal{G}^{\rho}_n 
		\subseteq \mathcal{H}^{\rho}_n$. Recall from 
        Definition \ref{recording sequences} that for $n \leq L$, the recording sequence $\eta_{n}$ takes values in 
		$${\mathcal{S}}_n := \Big\{r \in 
		\mathcal{S} \; : \; \sum_{m=1}^M r^{(m)} 
		= n \;\;\; \Big\}.$$
		
		For a fixed $r \in {\mathcal{S}}_N$, we define $\mathcal{P}_r$ to be the space of 
		sequences 
		of length $\sum_{m=1}^M r^{(m)} $ with values in $\mathcal{M}$ with exactly $r^{(m)}$ 
		replications of $m \in \mathcal{M}$. (e.g. for $\mathcal{M} =\{1,2\}$,  
		$\mathcal{P}_{(3,1)} = 
		\{(1,1,1,2), (1,1,2,1), (1,2,1,1), (2,1,1,1)\}$).
		
		For $A \in \mathcal{G}^{\rho}_n$ and $\pi \in \mathcal{P}_r$, 
		$$A \cap \big\{(\rho_k)_{0 \leq k \leq n-1} = \pi \big\} 
		= A \cap \big\{(\rho_k)_{0 \leq k \leq n-1} = \pi \big\} 
		\cap \{\eta_n = r \} \in \mathcal{F}(r).$$
		By the Doob--Dynkin lemma, there exists a measurable function $f: \mathbb{R}^n \to \mathbb{R}$ such that
		\begin{eqnarray}
		\label{arranging function for filtration}
			\mathbb{I}_{A \cap \left\{\left(\rho_k\right)_{0 \leq k \leq n-1} = 
				\pi \right\} }& = & f \Big(\big(\xi^{(m)}_t\big)_{1 \leq 
				t \leq 
				r^{(m)}, m \in \mathcal{M} }\Big) \nonumber
			\\ & = &
			f \Big(\big(\xi^{(m)}_t\big)_{1 \leq 
				t \leq 
				r^{(m)}, m \in \mathcal{M} }\Big)        \mathbb{I}_{ 
				\left\{\left(\rho_k\right)_{0 \leq k \leq n-1} = 
				\pi \right\} } \nonumber \\
			& = & {f}_\pi\Big(\left(\xi^\rho_k\right)_{1 \leq k \leq n} 
			\Big)         
			\mathbb{I}_{ 
				\left\{\left(\rho_k\right)_{0 \leq k \leq n-1} = 
				\pi \right\} } 
		\end{eqnarray}
		for some measurable function ${f}_{\pi}$ defined by reordering the 
		input of $f$ by $\pi$. 
		
		By the inductive hypothesis, $\mathbb{I}_{ 
			\left\{\left(\rho_k\right)_{0 \leq k \leq n-1} = 
			\pi \right\} }$ is $\mathcal{G}^{\rho}_{n-1}$-measurable and thus 
		$\mathcal{H}^{\rho}_{n-1}$-measurable. Hence, the RHS of \eqref{arranging function for filtration} is 
		$\mathcal{H}^{\rho}_{n}$-measurable. Therefore,
		$\mathbb{I}_A = \sum_{r \in {\mathcal{S}}_N} \sum_{\pi \in 
			\mathcal{P}_r}\mathbb{I}_{A \cap \left\{\left(\rho_k\right)_{0 \leq k 
				\leq n-1} =\pi\right\} }$ is $\mathcal{H}^{\rho}_{n}$-measurable, i.e. $A \in 
		\mathcal{H}^{\rho}_n$, which completes the proof.                
	\end{proof}
\end{theorem}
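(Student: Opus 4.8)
The plan is to prove the two inclusions separately, noting that one of them is already available. The inclusion $\mathcal{H}^\rho_n \subseteq \mathcal{G}^\rho_n$ is exactly the content of Lemma \ref{subset_1}, which shows that each observation $\xi^\rho_k$ is $\mathcal{G}^\rho_k$-measurable, hence (using the filtration property from Lemma \ref{chain of decision filtration}) $\mathcal{G}^\rho_n$-measurable. So the whole task reduces to proving the reverse inclusion $\mathcal{G}^\rho_n \subseteq \mathcal{H}^\rho_n$, which I would establish by induction on $n$, the base case $\mathcal{G}^\rho_0 = \mathcal{H}^\rho_0 = \{\emptyset, \bar{\Omega}\}$ being immediate.

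For the inductive step I assume $\mathcal{G}^\rho_{n-1} = \mathcal{H}^\rho_{n-1}$ and take $A \in \mathcal{G}^\rho_n$. The key idea is to decompose $\bar{\Omega}$ according to the realized sequence of decisions $(\rho_0, \ldots, \rho_{n-1})$. Writing $\mathcal{S}_n$ for the admissible count vectors $r$ with $\sum_m r^{(m)} = n$ and $r^{(m)} \le T^{(m)}$, and $\mathcal{P}_r$ for the deterministic decision patterns of length $n$ containing exactly $r^{(m)}$ copies of each $m$, the events $\{(\rho_k)_{0\le k \le n-1} = \pi\}$, ranging over $r \in \mathcal{S}_n$ and $\pi \in \mathcal{P}_r$, form a partition on each piece of which $\eta_n = r$ is constant. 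Hence $A \cap \{(\rho_k)_{0\le k \le n-1} = \pi\} = A \cap \{(\rho_k)_{0\le k \le n-1} = \pi\} \cap \{\eta_n = r\} \in \mathcal{F}(r)$ by the defining property of $\mathcal{G}^\rho_n$. Since $\mathcal{F}(r) = \bigotimes_m \mathcal{F}^{(m)}_{r^{(m)}}$ is generated by the first $r^{(m)}$ coordinates of each bandit, the Doob--Dynkin lemma lets me write the indicator of this set as a Borel function $f$ of $(\xi^{(m)}_t)_{1\le t \le r^{(m)},\, m \in \mathcal{M}}$.

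The crux is the reordering step. On the event $\{(\rho_k)_{0\le k \le n-1} = \pi\}$ the pattern $\pi$ is a fixed deterministic record of which bandit is played at each of the first $n$ plays, so the observed sequence $(\xi^\rho_1, \ldots, \xi^\rho_n)$ is precisely a $\pi$-dictated reordering of the collection $(\xi^{(m)}_t)_{1\le t\le r^{(m)},\, m\in\mathcal{M}}$. Consequently $f$ may be rewritten as $f_\pi\big((\xi^\rho_k)_{1\le k \le n}\big)$ on this event, and multiplying by the indicator gives $\mathbb{I}_{A\cap\{(\rho_k)=\pi\}} = f_\pi\big((\xi^\rho_k)_{1\le k \le n}\big)\,\mathbb{I}_{\{(\rho_k)_{0\le k\le n-1}=\pi\}}$. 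The first factor is $\mathcal{H}^\rho_n$-measurable by definition; the second is $\mathcal{H}^\rho_{n-1}$-measurable because each $\rho_k$ for $k \le n-1$ is $\mathcal{G}^\rho_k$-measurable, hence $\mathcal{G}^\rho_{n-1}$-measurable, and $\mathcal{G}^\rho_{n-1} = \mathcal{H}^\rho_{n-1}$ by the inductive hypothesis. I expect this reordering, together with verifying the $\mathcal{H}^\rho_{n-1}$-measurability of the decision-pattern indicators, to be the main obstacle, since it is exactly where conditioning on a \emph{deterministic} $\pi$ is essential: for random decisions the permutation relating $(\xi^\rho_k)$ to the orthant coordinates would itself be random.

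Finally, summing over the partition, $\mathbb{I}_A = \sum_{r\in\mathcal{S}_n}\sum_{\pi\in\mathcal{P}_r}\mathbb{I}_{A\cap\{(\rho_k)_{0\le k\le n-1}=\pi\}}$ is $\mathcal{H}^\rho_n$-measurable, so $A \in \mathcal{H}^\rho_n$. This closes the induction and, combined with Lemma \ref{subset_1}, yields $\mathcal{G}^\rho_n = \mathcal{H}^\rho_n$.
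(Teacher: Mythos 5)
Your proposal is correct and follows essentially the same route as the paper's own proof: the same reduction via Lemma \ref{subset_1} to the inclusion $\mathcal{G}^{\rho}_n \subseteq \mathcal{H}^{\rho}_n$, the same induction with partition over deterministic decision patterns $\pi \in \mathcal{P}_r$, the same application of the Doob--Dynkin lemma, and the same $\pi$-reordering $f \mapsto f_\pi$ combined with the inductive hypothesis to handle the pattern indicator. Your remark that the deterministic nature of $\pi$ is what makes the reordering legitimate is exactly the point implicit in the paper's equation \eqref{arranging function for filtration}.
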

\section{Numerical Algorithm to Estimate Robust Gittins index}\label{app:numerical}
\label{Surface Algorithm}
To approximate the value of $\gamma_{k,\beta,T}\left(p,1/\sqrt{n}\right)$ in the setting of Section \ref{sec:numerics}, we proceed following the rough recipe below. (The code used is available on request.)

Fix a grid of values for $\gamma \in G \subseteq [0,1]$.
 \begin{enumerate}
	\item Set $V^\gamma_T\Big(p,\frac{1}{\sqrt{n+T}}\Big) = 0$.
	\item Assume that we know $V^\gamma_{t+1}$. We evaluate the backward recursion
	\begin{align*}
	&V^\gamma_{t}\Big(p,\frac{1}{\sqrt{n+t}}\Big) \\ \;\;\;&= \min\Big\{0,
	\mathcal{E}_{(t)}\Big( (h(t) - \gamma) + \beta 
	V^\gamma_{t+1}\Big(p_{t+1},\frac{1}{\sqrt{n+(t+1)}}\Big) 
	\Big)\bigg|_{(p_t, n_t) = (p, n+t)}\Big\}
	\end{align*}

	This is done by considering the discrete values of $p$ and using linear interpolation over $[0,1]$.
	\item Using these iterates, determine the initial value function
	$$U\Big(\gamma, p,\frac{1}{\sqrt{n}}, T\Big) = \mathcal{E}_{(0)}\Big( (h(1) - 
	\gamma) + \beta V^\gamma_{1}\Big(p_{1},\frac{1}{\sqrt{n+1}}\Big) 
	\Big)\bigg|_{(p_0, n_0) = (p, n)}.$$
By Snell's envelope argument,
 $$U\Big(\gamma, p,\frac{1}{\sqrt{n}}, T\Big) = 
 \essinf_{\tau \in \mathcal{T}(s)} \mathcal{E}\bigg(\sum_{t=1}^{\tau}\beta^t 
 \left( h(t) - \gamma \right) \bigg| \mathcal{F}_0 \bigg)\bigg|_{(p_0, n_0) = 
 (p, 
 n)}. $$
\item Repeat step 1-3 for all $\gamma \in G$.
\item Calculate $\gamma_{k,\beta,T}(p,1/\sqrt{n})$ for a fixed $(p,1/\sqrt{n})$ by
$$\gamma_{k,\beta,T}\Big(p,\frac{1}{\sqrt{n}}\Big) = \min\Big\{\gamma \in G : U\Big(\gamma, p,\frac{1}{\sqrt{n}}\Big) \leq 0 \Big\}$$
\item Repeat the previous step to compute $\gamma_{k,\beta,T}(p,1/\sqrt{n})$ for other values of $(p,1/\sqrt{n})$ and obtain the surface by linear interpolation.
\end{enumerate} 
\end{document}